\def\@begintheorem#1#2{\par\bgroup{\sc #1\ #2. }\it\ignorespaces}
\def\@opargbegintheorem#1#2#3{\par\bgroup{\sc #1\ #2\ (#3). } \it\ignorespaces}
\def\@endtheorem{\egroup}
\newtheorem{theorem}{Theorem}[section]
\newtheorem{corollary}[theorem]{Corollary}
\newtheorem{lemma}[theorem]{Lemma}
\newtheorem{proposition}[theorem]{Proposition}
\newtheorem{example}[theorem]{Example}
\newtheorem{remark}[theorem]{Remark}
\newtheorem{conjecture}[theorem]{Conjecture}
\newtheorem{*theorem}[theorem]{*Theorem}
\newtheorem{*corollary}[theorem]{*Corollary}
\newtheorem{*lemma}[theorem]{*Lemma}
\newtheorem{*proposition}[theorem]{*Proposition}
\def\N{\mathbb{N}}
\def\R{\mathbb{R}}
\def\real{\mathbb{R}}
\def\reals{\mathbb{R}}
\def\aff{\operatorname{aff}}
\def\conv{\operatorname{conv}}
\def\diam{\operatorname{diam}}
\def\ast{\operatorname{ast}}
\def\lk{\operatorname{lk}}
\def\wed{\operatorname{W}}
\def\ops{\operatorname{S}}
\begin{document}

\title{An update on the Hirsch conjecture
%: \\Fifty-two years later
}

\author{
Edward D. Kim\thanks{Supported in part by the Centre de Recerca Matem\`atica, NSF grant DMS-0608785 and NSF VIGRE grants DMS-0135345 and DMS-0636297.}
\and
Francisco Santos\thanks{Supported in part by the Spanish Ministry of Science through grant MTM2008-04699-C03-02}
}

\date{}

\maketitle

\begin{abstract}
The Hirsch conjecture was posed in 1957 in a question from Warren M. Hirsch to George Dantzig. It states that the graph of a $d$-dimensional polytope with $n$ facets cannot have diameter greater than $n-d$. 
%That is to say, we can go from any vertex to any other vertex using at most $n-d$ edges.

Despite being one of the most fundamental, basic and old problems in polytope theory, what we know is quite scarce. Most notably, no polynomial upper bound is known for the diameters that are conjectured to be linear. In contrast, very few polytopes are known where the bound $n-d$ is attained. This paper collects known results and remarks both on the positive and on the negative side of the conjecture. Some proofs are included, but only those that we hope are accessible to a general mathematical audience without introducing too many technicalities.
\end{abstract}

\section{Introduction}

Convex polytopes generalize convex polygons (of dimension two).
More precisely, a \emph{convex polyhedron} is any intersection of finitely many affine closed half-spaces in $\real^d$. A \emph{polytope} is a bounded polyhedron.  The long-standing Hirsch conjecture is the following very basic statement about the structure of polytopes. Besides its implications in linear programming, which motivated the conjecture in the first place, it is one of the most fundamental open questions in polytope theory.

\begin{conjecture}[Hirsch conjecture]\label{conj:hirsch}
Let $n > d \geq 2$. Let $P$ be a $d$-dimensional polytope with $n$ facets.  Then $\diam(G(P)) \leq n - d$.
\end{conjecture}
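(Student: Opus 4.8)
The natural route is the classical reduction of Klee and Walkup. I lay it out below, even though --- as I note at the end --- it cannot be carried to completion.

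\emph{Step 1 (reduce to simple polytopes).} Perturb the $n$ facet-defining inequalities of $P$ into general position to obtain a simple $d$-polytope $P'$ with $n$ facets; each vertex of $P$ ``blows up'' into a subpolytope of $P'$ with connected graph, and a routine argument gives $\diam(G(P)) \le \diam(G(P'))$. So it suffices to bound $\diam(G(P))$ for simple $P$. In a simple $d$-polytope a $k$-face is the intersection of exactly $d-k$ facets, so an edge lies on $d-1$ facets and each of its endpoints on $d$; hence a single edge-step abandons exactly one facet and enters exactly one.

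\emph{Step 2 (aim for the non-revisiting property).} Call a path $w_0,\dots,w_k$ \emph{non-revisiting} if for every facet $F$ the set of indices $i$ with $w_i\in F$ is an interval. If $w_0=u$ and $w_k=v$, then the facets entered along the path are pairwise distinct and none is among the $d$ facets through $u$ (otherwise the indices on that facet would not be contiguous), so $k\le n-d$. Thus the non-revisiting path conjecture --- every two vertices of a simple polytope are joined by a non-revisiting path --- implies the Hirsch bound, and I would attack it by induction on $d$ (equivalently $n-d$): choose a facet $F\ni v$ with $u\notin F$, which is a simple $(d-1)$-polytope with at most $n-1$ facets, walk from $u$ to some vertex of $F$ by a path that never re-enters a facet it has left, and recurse inside $F$. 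An equivalent line is the $d$-step reduction: via the wedge construction $\wed$ one reduces to the case $n=2d$, where the assertion becomes that any two vertices of a simple $d$-polytope with exactly $2d$ facets lie at distance at most $d$, and one tries to push a non-revisiting path by pivots that each gain one more facet through $v$.

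\emph{The main obstacle.} The entire difficulty lies in the pivoting/gluing step of Step 2: when the first leg reaches $F$ and the path continues inside $F$, a facet of $P$ left earlier can reappear as a facet of $F$, breaking the non-revisiting property, and there is no apparent way to choose pivots that both progress toward $v$ and avoid this. This is precisely where every attempt since 1957 has stalled, and I must be candid that the plan is in fact doomed: the unbounded (polyhedral) version of the Hirsch conjecture and the non-revisiting path conjecture are both \emph{false}, so no correct argument can route through them, and --- as is now known --- the bounded conjecture is itself false, a counterexample being built from spindles of carefully engineered width by the same ``trade facets for dimension'' mechanism that drives the $d$-step reduction. What does survive on the positive side is the Barnette--Larman bound (linear in $n$ for each fixed $d$, of order $2^{d-3}n$) and the Kalai--Kleitman quasi-polynomial bound (of order $n^{O(\log d)}$); whether $\diam(G(P))$ admits a polynomial bound in $n$ and $d$ remains open.
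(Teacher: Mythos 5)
You have correctly recognized that the statement is a \emph{conjecture}, not a theorem: the paper offers no proof of it, and none was known at the time of writing. Your Step~1 (reduction to simple polytopes) is exactly the paper's Lemma~\ref{lemma:simple}, and your Step~2 reproduces the paper's own reductions in Sections~\ref{sec:wedging} and~\ref{sec:equivalences}: the equivalence of the Hirsch, non-revisiting, and $d$-step conjectures (Theorems~\ref{thm:dstep-hirsch}, \ref{thm:nonrevisiting}, \ref{thm:dstep-nonrevisiting}), obtained via the wedge operation. You also correctly locate where every such attempt stalls --- the gluing/pivoting step where a previously abandoned facet may reappear --- which is precisely why these remain conjectures in the paper rather than theorems.

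One caution on your closing remarks: your assertion that the bounded Hirsch conjecture ``is itself false, a counterexample being built from spindles'' is historically accurate but anachronistic relative to this survey, which predates that counterexample; within the paper's horizon only the \emph{unbounded}, \emph{monotone}, and \emph{topological} variants are known to fail (Section~\ref{sec:unbounded-monotone} and Section~\ref{sec:topological}). Also, be careful to distinguish: the non-revisiting conjecture for \emph{polytopes} is equivalent to the Hirsch conjecture, so it is not ``independently false''; what the paper shows is false (via Mani--Walkup) is the non-revisiting property for triangulated \emph{spheres}, which need not be polytopal. Otherwise your assessment --- that this is an open conjecture whose natural attack route is the $d$-step/non-revisiting reduction, and that only Barnette--Larman and Kalai--Kleitman survive as general upper bounds --- is exactly the state of affairs the paper records.
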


\emph{Facets} are the faces of dimension $d-1$ of $P$, so that
the number $n$ of them is the minimum number of half-spaces needed to produce $P$ as their intersection (assuming $P$ is full-dimensional). The number $\diam(G(P))\in \N$ is the \emph{diameter of the graph of $P$}. Put differently, the conjecture states that we can go from any vertex of $P$ to any other vertex traversing at most $n-d$ edges. 

Consider the following examples; all of them satisfy the inequality strictly, except for the cube where it is tight:
\begin{center}
\begin{tabular}{c|ccc}
$P$ & $n$ & $d$ & $\diam(G(P))$ \cr
\hline
polygon & $n$ & $2$ & $\lfloor n/2 \rfloor$ \\
cube & $6$ & $3$ & $3$ \\
icosahedron & $20$ & $3$ & $3$ \\
soccer-ball & $32$ & $3$ & $9$ \\
\end{tabular}
\end{center}

Polytopes and polyhedra are the central objects in the area of geometric combinatorics, but they also appear in diverse mathematical fields:
From the applications point of view, a polyhedron is the \emph{feasibility region} of a \emph{linear program}~\cite{Dantzig-book}. This is the context in which the Hirsch conjecture was originally posed (see below).
 In \emph{toric geometry}, to every (rational) polytope one associates a certain projective variety (see, e.g.,~\cite{Oda-book}). The underlying interaction between combinatorics and algebraic geometry has proved extremely fruitful for both areas, leading for example to a complete characterization of the possible numbers of faces (vertices, edges, facets, \dots) that a \emph{simplicial polytope} can have. The same question for arbitrary polytopes is open in dimension four and higher~\cite{Ziegler-facenumbers}. Polytopes with special symmetries, such as regular ones and variations of them arise naturally from Coxeter groups and other algebraic structures~\cite{Bjorner-Brenti, FominZelevinski-associahedra}. Last but not least, counting integer points in polytopes with integer vertex coordinates has applications ranging from number theory and
representation theory to cryptography, integer programming, and statistics~\cite{Beck-Robins,deloera-latticepoints}.

In this paper we review the current status of the Hirsch conjecture and related questions. Some proofs are included, and many more appear in an appendix which is available electronically~\cite{Kim-Santos-companion}. Results whose proof can be found in~\cite{Kim-Santos-companion} are marked with an asterisk. 
An earlier survey of this topic, addressed to a more specialized audience, was written by Klee and Kleinshmidt in 1987~\cite{Klee-Kleinschmidt}.

\subsection{A bit of polytope theory}

We now review several concepts that will appear throughout this paper. For further discussion, we refer the interested reader to~\cite{Ziegler:LecturesPolytopes}.

A \emph{polyhedron} is the intersection of a finite number of closed half-spaces and a polytope is a bounded polyhedron. A \emph{polytope} is, equivalently, the convex hull of a finite collection of points. Although the geometric objects are the same, from a computational point of view it makes a difference whether a certain polytope is represented as a convex hull or via linear inequalities: the size of one description cannot be bounded polynomially in the size of the other, if the dimension $d$ is not fixed. The \emph{dimension} of a polytope is the dimension of its affine hull $\aff(P)$. A $d$-dimensional polytope is called a \emph{$d$-polytope}.

If $H$ is a closed half-space containing $P$, then the intersection of $P$ with the boundary of $H$ is called a \emph{face} of $P$. A non-empty face is the intersection of $P$ with a supporting hyperplane. Faces are themselves polyhedra of lower dimension. A face of dimension $i$ is called an \emph{$i$-face}. The $0$-faces are the \emph{vertices} of $P$, the $1$-faces are \emph{edges}, the $(d-2)$-faces are \emph{ridges}, and the $(d-1)$-faces are called \emph{facets}. In its irredundant description, a polytope is the convex hull of its vertices, and the intersection of its facet-defining half-spaces.

For a polytope $P$, we denote by $G(P)$ its \emph{graph} or \emph{$1$-skeleton}, consisting of the vertices and edges of $P$: the vertices of the graph $G(P)$ are indexed by the vertices of the polytope $P$, and two vertices in the graph $G(P)$ are connected by an edge exactly when their corresponding vertices in $P$ are contained in a $1$-face. The \emph{distance} between two vertices in a graph is the minimum number of edges needed to go from one to the other, and the \emph{diameter} of a graph is the maximum distance between its vertices. Let $H(n,d)$ denote the maximum diameter of graphs of $d$-polytopes with $n$ facets. (For an unbounded polyhedron, the graph contains only the \emph{bounded} edges. The unbounded $1$-faces are called \emph{rays}.)

\begin{example}\label{example:basic}
Examples of polytopes one can build in every dimension are the following:
\begin{enumerate}

\item {\bf The $d$-simplex.}
The convex hull of $d+1$ points in $\R^d$ that do not lie on a common hyperplane is a $d$-dimensional simplex. It has $d+1$ vertices and $d+1$ facets. Its graph is complete, so its diameter is 1.

\item {\bf The $d$-cube.}
The vertices of the $d$-cube, the product of $d$ segments,  are the $2^d$ points with $\pm 1$ coordinates. Its facets are given by the $2d$ inequalities $-1\le x_i\le 1$.  Its graph has diameter $d$: the steps needed to go from a vertex to another equals the number of coordinates in which the two vertices differ. 

\item {\bf Cross polytope.}
This  is the convex hull of the $d$ standard basis vectors and their negatives, which
generalizes the $3$-dimensional octahedron. It has $2^d$ facets,  one in each orthant of $\reals^d$. Its graph is almost complete: the only edges missing from it are those between opposite vertices.
 \end{enumerate}
See Figure \ref{fig:basic-examples}

\begin{figure}[htb]
\begin{center}
\includegraphics[width=3in]{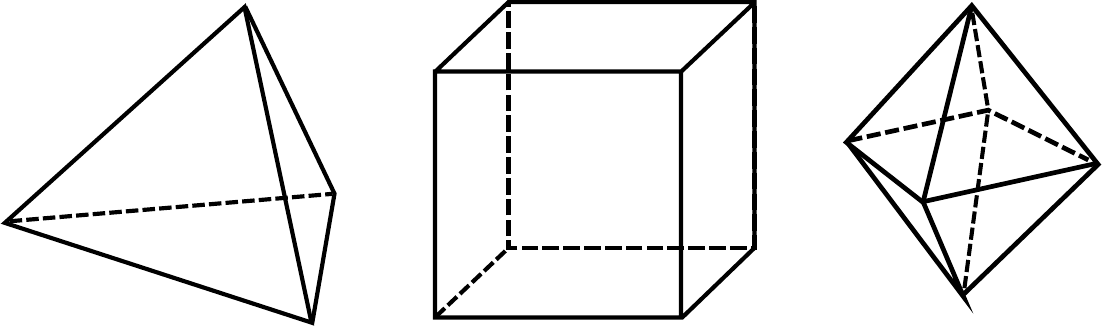}
\caption{Basic examples of polytopes}
\label{fig:basic-examples}
\end{center}
\end{figure}

\end{example}
Their numbers $m$ of vertices, $n$ of facets, dimension $d$ and diameter are:
\begin{center}
\begin{tabular}{c|cccc}
$P$ & $m$&$n$ & $d$ & $\diam(G(P))$ \cr
\hline
$d$-simplex &$d+1$ & $d+1$ & $d$ & $1$ \\
$d$-cube &$2^d$ & $2d$ & $d$ & $d$ \\
$d$-crosspolytope &$2d$ & $2^d$ & $d$ & $2$ \\
\end{tabular}
\end{center}

Of special importance are the simple and simplicial polytopes. A $d$-polytope is called \emph{simple} if every vertex is the intersection of exactly $d$ facets. Equivalently, a $d$-polytope is simple if every vertex in the graph $G(P)$ has degree exactly $d$. We note that the $d$-simplices and $d$-cubes are simple, but cross-polytopes are not simple starting in dimension three. Any polytope or polyhedron $P$, given by its facet-description, can be perturbed to a simple one $P'$ by a generic and small change in the coefficients of its defining inequalities. This will make non-simple vertices ``explode'' and become clusters of new vertices, all of which will be simple. This process can not decrease the diameter of the graph, since we can recover the graph of $P$ from that of $P'$ by collapsing certain edges. Hence, to study the Hirsch conjecture, one only needs to consider the simple polytopes:
\begin{lemma}
\label{lemma:simple}
The diameter of any polytope $P$ is bounded above by the diameter of some simple polytope $P'$ with the same dimension and number of facets.
\end{lemma}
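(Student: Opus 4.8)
The plan is to make the perturbation argument outlined before the statement precise. Write $P$ by its $n$ irredundant facet inequalities $\langle a_i,x\rangle\le b_i$, replace each right-hand side $b_i$ by $b_i+\varepsilon_i$ for a generic vector $\varepsilon=(\varepsilon_1,\dots,\varepsilon_n)$ with $\|\varepsilon\|$ small, and let $P'=P'(\varepsilon)$ be the resulting polyhedron. First I would record the routine facts that, for $\|\varepsilon\|$ small, $P'$ is again a bounded full-dimensional $d$-polytope with exactly $n$ facets: its recession cone is unchanged (only the constants moved), so it stays bounded; a relative-interior point of the $i$-th facet of $P$ can be pushed slightly to certify that inequality $i$ is still facet-defining, so no facet is lost, and none is gained since $P'$ uses only these $n$ inequalities; and for $\varepsilon$ generic no $d+1$ of the perturbed facet hyperplanes have a common point, so every vertex of $P'$ lies on exactly $d$ facets and $P'$ is simple. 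The content of the lemma is therefore the inequality $\diam(G(P))\le\diam(G(P'))$.

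The core is an ``explosion map'' $\pi$ from the vertex set of $P'$ onto that of $P$. A vertex $v'$ of $P'$ is the solution of a nonsingular system $\langle a_i,x\rangle=b_i+\varepsilon_i$, $i\in S$, for the $d$-set $S$ of facets through it (whose normals $\{a_i\}_{i\in S}$ necessarily have rank $d$); let $v_S$ be the solution of the unperturbed system $\langle a_i,x\rangle=b_i$, $i\in S$. Then $v_S$ is a vertex of $P$: if some inequality $j\notin S$ were violated at $v_S$ it would also be strictly violated at $v'$ for $\|\varepsilon\|$ small, contradicting $v'\in P'$; hence $v_S\in P$, and being tight at the $d$ independent facets of $S$ it is a vertex. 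Since there are only finitely many candidate sets $S$ and $v_S$ depends continuously on $\varepsilon$, for $\|\varepsilon\|$ small enough $v'$ is nearer to $v_S$ than half the least distance between distinct vertices of $P$, so $\pi(v'):=v_S$ is unambiguous. For surjectivity, fix a vertex $v$ of $P$ and a linear functional $c$ attaining its minimum over $P$ uniquely at $v$; since $P'$ is bounded, $c$ attains its minimum over $P'$ at some vertex $v'(\varepsilon)$, and a compactness argument (all $P'(\varepsilon)$ with $\varepsilon$ near $0$ lie in a fixed ball, and $P'(\varepsilon)\to P$ in the Hausdorff metric) forces $v'(\varepsilon)\to v$, hence $\pi(v'(\varepsilon))=v$ for $\|\varepsilon\|$ small.

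The key step is: \emph{if $u'v'$ is an edge of $P'$ with $\pi(u')\ne\pi(v')$, then $\pi(u')\pi(v')$ is an edge of $P$.} Because $P'$ is simple, the edge $u'v'$ lies on exactly $d-1$ facet hyperplanes, indexed by a set $T$ with $\{a_i\}_{i\in T}$ independent and $T$ contained in the defining $d$-sets of both $u'$ and $v'$. Hence both $\pi(u')$ and $\pi(v')$ satisfy $\langle a_i,x\rangle=b_i$ for all $i\in T$, so they lie on the line $L=\bigcap_{i\in T}H_i$ and also in $P$; thus they lie in the face $F=L\cap P=\bigcap_{i\in T}(H_i\cap P)$ of $P$. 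Since $F$ is contained in a line yet contains the two distinct vertices $\pi(u'),\pi(v')$ of $P$, it is a $1$-face, i.e.\ an edge of $P$, and its only vertices are $\pi(u')$ and $\pi(v')$.

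With these pieces the lemma follows. Given vertices $u,v$ of $P$, choose $u'\in\pi^{-1}(u)$, $v'\in\pi^{-1}(v)$ by surjectivity and a shortest path $\gamma'$ in $G(P')$ from $u'$ to $v'$, of length at most $\diam(G(P'))$; applying $\pi$ to its vertex sequence and deleting repetitions turns $\gamma'$ into a walk in $G(P)$ from $u$ to $v$, since by the key step each edge of $\gamma'$ either collapses or maps to an edge of $P$. This walk is no longer than $\gamma'$, so the distance from $u$ to $v$ in $G(P)$ is at most $\diam(G(P'))$, and taking the maximum over $u,v$ finishes the proof. I expect the only real nuisance to be the ``for $\|\varepsilon\|$ small enough'' bookkeeping in the construction of $\pi$ — checking that a single $\varepsilon$ simultaneously makes $P'$ simple, preserves all $n$ facets, places each vertex of $P'$ near a unique vertex of $P$, and realizes every vertex of $P$ — but this is a finite intersection of nonempty open conditions, so no genuine obstacle arises; the combinatorial heart of the argument is the one-line face-dimension count in the key step.
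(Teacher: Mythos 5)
Your proof is correct and is a careful, rigorous rendering of exactly the perturbation-and-collapse argument that the paper sketches informally in the paragraph preceding the lemma (the paper itself gives no formal proof, treating the fact as standard). The ``explosion map'' $\pi$ you construct is precisely the collapsing the authors allude to, and the face-dimension count showing edges of $P'$ either collapse or map to edges of $P$ is the combinatorial heart both you and they rely on.
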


Graphs of simple polytopes are better behaved than graphs of arbitrary polytopes. Their main property in the context of the Hirsch conjecture is that if $u$ and $v$ are vertices joined by an edge in a simple polytope then there is a single facet containing $u$ and not $v$, and a single facet containing $v$ and not $u$. That is, at each step along the graph of $P$ we enter a single facet and leave another one. 

Every polytope $P$ (containing the origin in its interior, which can always be assumed by a suitable translation) has a polar polytope $P^*$ whose vertices (respectively facets) correspond to the facets (respectively vertices) of $P$. More generally, every $(d-i)$-face of $P^*$ corresponds to a face of $P$ of dimension $i-1$, and the incidence relations are reversed.

The polars of simple polytopes are called \emph{simplicial}, and their defining property is that every facet is a $(d-1)$-simplex. As an example, the $d$-dimensional cross polytope is the polar of the $d$-cube. Since cubes are simple polytopes, cross polytopes are simplicial. The polar of a simplex is a simplex, and simplices are the only polytopes of dimension greater than two which are at the same time simple and simplicial. Since all faces of a simplex are themselves simplices, all faces of a simplicial polytope are simplices. From this viewpoint, one can forget the geometry of $P^*$ and look only at the combinatorics of the simplicial complex formed by its faces, the boundary of $P$. Topologically, this simplicial complex is a sphere of dimension $d-1$.

For simplicial polytopes we can state the Hirsch conjecture as asking how many ridges do we need to cross in order to walk between two arbitrary facets, if we are only allowed to move from one facet to another via a ridge. This suggests defining the \emph{dual graph} $G^\Delta(P)$ of a polytope: The undirected graph having as nodes the facets of $P$ and in which two nodes are connected by an edge if and only if their corresponding facets intersect in a ridge of $P$. In summary, $G^\Delta(P)=G(P^*)$.

\subsection{Relation to Linear Programming}

The  original motivation for the Hirsch conjecture comes from its relation to the simplex algorithm for linear programming. In linear programming, one is given a system of linear equalities and inequalities, and the goal is to maximize (or minimize) a certain linear functional. Every such problem can be put in the following standard form, where $A$ is an $m \times n$ real matrix $A$, and ${\bf b}\in \reals^m$ and ${c}\in \reals^n$ are  two real vectors:
\[
\text{ Maximize } c\cdot {\bf x}, \text{ subject to } A{\bf x} = {\bf b} \text{ and } {\bf x} \ge 0.
\]
Suppose the matrix $A$ has full row rank $m \leq n$. Then, the equality $A{\bf x} = {\bf b}$ defines a $d$-dimensional affine subspace ($d=n-m$), whose intersection with the linear inequalities $ {\bf x} \ge 0$ gives the \emph{feasibility polyhedron} $P$:
\[
P:= \{ {\bf x} \in \R^n : A{\bf x} = {\bf b} \text{ and } {\bf x} \ge 0\}.
\]
One typically desires not only the maximum value of $c\cdot {\bf x}$ but also (the actual coordinates of) a vector ${\bf x} \in P$ where the maximum is attained. It is easy to prove that such an $x$, if it exists, can be found among the vertices of $P$. If $P$ is unbounded and $c\cdot {\bf x}$ does not have an upper bound on it
one considers the problem ``solved'' by describing a ray of $P$ where the value $c \dot {\bf x}$ goes to infinity. 

In 1979, Khachiyan~\cite{Khachiyan} proved that linear programming problems can be solved in polynomial time via the so-called \emph{ellipsoid method}. In 1984, Karmarkar~\cite{Karmarkar} devised a different approach,  the \emph{interior point method}. 
Although the latter is more applicable (easier to implement, better complexity) than the former, still to this day the most commonly used method for linear programming is the \emph{simplex method} devised by G. Dantzig in 1947. 
For a complete account of the complexity of linear programming, see the survey~\cite{Megiddo:SurveyLPCompelxity} by Megiddo.

In geometric terms, the simplex method first finds an arbitrary vertex in the feasibility polyhedron $P$. Then, it moves from vertex to adjacent vertex in such a way that the value $c \cdot {\bf x}$ of the linear functional increases at every step. 
These steps are called \emph{pivots} and the rule used to choose one specific adjacent vertex is called the \emph{pivot rule}.
When no {pivot step} can increase the functional, convexity implies that we have arrived to the global maximum. 

Clearly, a lower bound for the performance of the simplex method under \emph{any} pivot rule is the diameter of the polyhedron $P$. The converse is not true, since knowing that $P$ has a small graph diameter does not in principle tell us how to go from one vertex to another in a small number of steps. In particular, many of the results on diameters of polyhedra do not help for the simplex method.

In fact, the complexity of the simplex method depends on the local rule (known as a \emph{pivot rule}) chosen to move from vertex to vertex. The \emph{a priori} best pivot rule, the one originally proposed by Dantzig,  is ``move along the edge with maximum gradient'', but Klee and Minty~\cite{Klee-Minty} showed in 1972 that this can lead to paths of exponential length, even in polytopes combinatorially equivalent to cubes. The same worst-case exponential behavior has been proved for essentially every deterministic rule devised so far, although there are subexponential, but yet not polynomial, randomized pivot algorithms (see Theorem~\ref{thm:randomizedpivot}). However, the simplex algorithm is highly efficient \emph{in practice} on most linear optimization problems. 

There is another reason why investigating the complexity of the simplex method is important, even if we already know polynomial time algorithms. The algorithms of Khachiyan and Karmarkar are polynomial in the \emph{bit length} of the input; but it is of practical importance to know whether a polynomial algorithm for linear programming in the \emph{real number machine} model of Blum, Cucker, Shub, and Smale~\cite{BCSS} exists. That is, is there an algorithm that uses a number of arithmetic operations that is polynomial on the number of
coefficients of the linear program, rather than on their total bit-length; or, better yet, a \emph{strongly polynomial algorithm}, i.e. one that is polynomial both in the arithmetic sense and the bit sense? These two related problems were included by Smale in his list of ``mathematical problems for the next century''~\cite{Smale}. A polynomial pivot rule for the simplex method would solve them in the affirmative.

In this context, the following {\em polynomial version} of the conjecture is relevant, if the linear one turns out to be false. See, for example,~\cite{kalaiblog}:

\begin{conjecture}[Polynomial Hirsch conjecture]\label{conj:hirsch-poly}
Is there a polynomial function $f(n,d)$ such that for any polytope (or polyhedron)
$P$ of dimension $d$ with $n$ facets, $\diam(G(P)) \leq f(n,d)$?.
\end{conjecture}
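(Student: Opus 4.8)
\medskip
\noindent\emph{A proof proposal.} Conjecture~\ref{conj:hirsch-poly} is open, so what follows is a plan of attack rather than a proof; the best that these methods currently deliver is a \emph{quasi}-polynomial bound, and I indicate where the step to a genuine polynomial breaks down.

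By Lemma~\ref{lemma:simple} we may work with \emph{simple} $d$-polytopes, so fix a simple $d$-polytope $P$ with facet set $\mathcal{F}$, $|\mathcal{F}|=n$, and two vertices $u,v$ whose distance we wish to bound. The engine is a two-sided breadth-first search in $G(P)$, in the spirit of Kalai and Kleitman. For a vertex $w$ and an integer $r\ge 0$, let $B_r(w)$ be the set of vertices at graph-distance at most $r$ from $w$, and let $\mathcal{F}_r(w)\subseteq\mathcal{F}$ be the set of facets meeting $B_r(w)$; note that, $P$ being simple, each vertex of $B_r(w)$ lies on exactly $d$ facets, all of which belong to $\mathcal{F}_r(w)$. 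The combinatorial core is a \emph{confinement lemma}: if $B_r(w)$ meets at most $n/2$ facets, then $r\le H(\lfloor n/2\rfloor,d)$ --- the idea being that a shortest path ending in $B_r(w)$ stays inside the few facets that ball touches, so travelling far while touching few facets is itself a constrained diameter problem. Establishing this precisely is the technical heart of the Kalai--Kleitman argument, but it is not the main obstacle to a polynomial bound.

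Granting the confinement lemma, let $r_u$ be least with $|\mathcal{F}_{r_u}(u)|>n/2$ and $r_v$ least with $|\mathcal{F}_{r_v}(v)|>n/2$, so that $r_u,r_v\le H(\lfloor n/2\rfloor,d)+1$. Since $\mathcal{F}_{r_u}(u)$ and $\mathcal{F}_{r_v}(v)$ each have more than $n/2$ elements, they share a facet $F$; pick $u'\in B_{r_u}(u)\cap F$ and $v'\in B_{r_v}(v)\cap F$. Now $F$ is a simple $(d-1)$-polytope with at most $n-1$ facets, and the distance in $G(P)$ from $u'$ to $v'$ is at most the distance inside $F$, which is at most $H(n-1,d-1)$. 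Concatenating the walk from $u$ to $u'$, then to $v'$, then to $v$ yields a Kalai--Kleitman-type recursion, roughly
\[
H(n,d)\ \le\ 2\,H(\lfloor n/2\rfloor,d)\ +\ H(n-1,d-1)\ +\ 2 ,
\]
with small base cases such as $H(n,1)=1$ and $H(d+1,d)=1$ (and, in the regime where $n$ is not much larger than $d$, a bound linear in $n$ with constant depending on $d$). Unwinding the recursion, the exponent of $n$ grows by about one each time $d$ doubles, which gives $H(n,d)\le n^{O(\log d)}$: quasi-polynomial, but not polynomial.

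The hard part is exactly this last gap, and it seems to require a genuinely new idea rather than a sharpening of the above. The recursion is lossy because of the factor $2$ in front of $H(\lfloor n/2\rfloor,d)$: it arises from a pigeonhole step requiring \emph{both} $|\mathcal{F}_{r_u}(u)|$ and $|\mathcal{F}_{r_v}(v)|$ to exceed $n/2$, and no weighting of the two sides avoids a halving in the worst case; a clean recursion that reduced only the dimension and not the number of facets would, if it existed, give a linear bound, which is precisely why no such recursion is known. Two avenues present themselves, each with a serious obstacle. One could try to show that reaching a constant fraction of the facets already forces $r$ to be bounded by $H$ at a substantially smaller facet count, producing a recursion that genuinely contracts in $n$; but the ``one new facet per edge'' property of simple polytopes appears to yield essentially only the recursion above. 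Alternatively one could abandon geometry and seek a polynomial diameter bound for an axiomatic combinatorial abstraction retaining only the properties used here (a ``base/layer'' set-system); the obstacle is that several such abstractions are known to have super-polynomial diameter, so any successful argument must reinject geometric information that those abstractions discard. Pinning down which geometric features of polytopes are indispensable --- and packaging them into a tractable abstraction --- is, to my mind, the crux of the problem.
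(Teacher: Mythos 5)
This statement is an open conjecture, and the paper neither claims nor supplies a proof of it; there is therefore no ``paper's proof'' to compare against, and you are right to present a plan of attack rather than a proof. Your sketch faithfully reproduces the Kalai--Kleitman argument, which in the paper is Theorem~\ref{thm:quasipolynomial} (proved in the companion appendix). Your confinement lemma is exactly the step where one replaces $P$ by the polyhedron $Q$ cut out by the facets already reached, and your recursion
\[
H(n,d)\le 2\,H(\lfloor n/2\rfloor,d)+H(n-1,d-1)+2
\]
is a slightly tightened form of the paper's $H_u(n,d)\le 2H_u(\lfloor n/2\rfloor,d)+H_u(n,d-1)+2$ (the paper over-counts the facet's facets by one; the asymptotics are identical). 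Your conclusion $H(n,d)\le n^{O(\log d)}$ matches the paper's $n^{\log_2 d+1}$. Your diagnosis of where the approach stalls also lines up with the paper: the observation that a purely combinatorial abstraction retaining only the ``stay inside the facets of $u\cap v$'' property cannot yield a polynomial bound is precisely the content of the Eisenbrand--H\"ahnle--Razborov--Rothvo\ss\ result (Theorem~\ref{thm:eisenbrand} and the remarks after it), which exhibits such abstract systems with diameter $\Omega(n^2/\log n)$. So there is no error to flag; your proposal is an accurate and appropriately self-critical account of the current state of the problem, and the gap you identify is the real one, not an artifact of your write-up.
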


\subsection{Overview of this paper}

Our initial purpose with this paper was two-fold: on the one hand, we thought it is about time to have in a single source an overview of the state of the art  concerning the Hirsch conjecture and related issues, putting up to date the 20 year old survey by Klee and Kleinschmidt~\cite{Klee-Kleinschmidt}.
On the other hand, since there seems to be agreement on the fact that the Hirsch conjecture is probably false (with opinions about the polynomial version of it being divided) we wanted to give a fresh look at the past attempts to \emph{disprove} the conjecture. 

These two goals turned out to be in conflict, or at least too ambitious, so the first version of the paper  was too long and too technical for the intended readership. After wise comments from our editor J\"org Rambau and an anonymous referee, we decided to take most of the proofs out of the main paper, and compiled them in the companion paper~\cite{Kim-Santos-companion}. Results whose proof can be found in~\cite{Kim-Santos-companion} are marked with an asterisk.

Our two-fold intentions are still reflected in the two quite distinct parts that the paper has, Sections~\ref{sec:bounds-algorithms} and~\ref{sec:constructions}. The first one is devoted to positive results, comprising general upper bounds for polytope diameters, special cases where the Hirsch conjecture is known, etc, and the second one contains mainly constructions and results aimed at disproving the Hirsch conjecture. 

The two sections differ  in several other respects: Section~\ref{sec:bounds-algorithms} is written in an informative style. No proofs are included (although some appear in~\cite{Kim-Santos-companion})
since this section covers quite different topics and the techniques and ideas used are too technical and, more importantly, too diverse. In Section~\ref{sec:constructions}, in the contrary, we provide proofs for essentially all the results (some here and some in~\cite{Kim-Santos-companion}); on the one hand the tools needed are more homogeneous and elementary; on the other hand, in this section we feel  that having a new look at old results is useful. We have tried to identify the basic ingredients in each construction, obtaining in some cases much simpler (to our taste) proofs and expositions than the original ones. In particular, the main novelty in this survey, if any, is probably in our descriptions of the non-Hirsch polyhedron and Hirsch-sharp polytope found by Klee and Walkup in 1967 (Section~\ref{sec:Q4}) and of the non-Hirsch sphere found by Mani and Walkup in 1980 (Section~\ref{sec:topological}). 

Another difference between the two sections is that Section~\ref{sec:bounds-algorithms} contains several very recent developments, while all of Section~\ref{sec:constructions}, with the single exception of Theorem~\ref{thm:hirsch-sharp}, refers to results that are at least 25 years old.

Let us now give a brief roadmap for the paper.
\medskip

Section~\ref{sec:small-dimension} lists the pairs of parameters $(n,d)$ such that the Hirsch Conjecture is known to hold for all $d$-polytopes with $n$ facets. That is, denoting $H(n,d)$ the maximum diameter of $d$-polytopes with $n$ facets, we list all pairs for which the Hirsch inequality $H(n,d)\le n-d$ is known to hold. This comprises the cases $d\le 3$ (Klee~\cite{Klee:PathsII}), $n-d\le 6$ (Bremner and Schewe~\cite{Bremner:DiameterFewFacets}), and $(n,d)\in\{(11,4), (12,4)\}$
(Bremner et al.~\cite{Bremner:MoreBounds}).
%,  besides the cases of $H(12,5)$ and $H(13,6)$.

The next section lists general upper bounds on $H(n,d)$: a linear one in fixed dimension (Barnette and Larman) ~\cite{Barnette, Larman} and a quasi-polynomial one of $n^{\log_2(d) + 1}$ (Kalai-Kleitman~\cite{Kalai:Quasi-polynomial}). These bounds hold not only for diameters of polytopes but also for much more abstract and general objects. A very recent development by Eisenbrand, H\"ahnle, Razborov and Rothvo\ss~\cite{Eisenbrand:limits-of-abstraction} is the identification of one such class for which the proofs of these two bounds work but which admit objects with quadratic diameter. This may be considered evidence against the Hirsch conjecture.

In Section~\ref{sec:subexponential} we concentrate on algorithmic aspects. For example, we state two algorithmic analogues of the two bounds mentioned above: the proof by Meggido~\cite{Megiddo:LPinLT} that linear programming can be done in linear time if the dimension is fixed, and randomized pivot rules for the simplex method in arbitrary dimension that finish in $O(\exp(K\sqrt{d \log n}))$ (Kalai~\cite{Kalai:A-subexponential-randomized}, Matou\v{s}ek, Sharir and Welzl~\cite{Matousek:A-subexponential-bound}).

We then turn our attention to special polytopes for which good bounds are known. Polytopes with $0$-$1$ coordinates and linear programming duals of transportation polytopes are known to satisfy the Hirsch conjecture (Naddef~\cite{Naddef:Hirsch01}, Balinski~\cite{Balinski:DualTransportation}). Diameters of network-flow polytopes, which include transportation polytopes, have quadratic bounds~\cite{Cunningham:Theoretical-properties,Goldfarb:Polynomial-simplex,Orlin:PolytimeNetworkSimplex}.

Section~\ref{sec:bounds-algorithms} finishes with an account of recent work of Deza, Terlaky and Zinchenko~\cite{Deza:Central-path,Deza:The-continuous-d-step,Deza:Curvature} on a continuous analogue of the Hirsch conjecture that arises in the context of interior point methods for linear programming.
\medskip

Almost all of the results in Section~\ref{sec:constructions} revolve around two basic ingredients. The first one is the~\emph{wedge} operation, which we describe in Section~\ref{sec:wedging}. Wedging is a very simple operation that increases both the dimension and number of facets of a polytope by one maintaining (or increasing) its diameter. Using it it is easy to prove the following fundamental result: 

\begin{theorem}[Klee and Walkup~\cite{Klee:d-step}]
\label{thm:dstep-intro}
$H(d+k,d) \le H(2k,k)$, with equality if (but not only if) $k< d$.
\end{theorem}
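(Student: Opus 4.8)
\medskip
\noindent\textit{Proof plan.}
The plan is to combine two elementary operations, both of which preserve the quantity $n-d$. The first is the \emph{wedge} of Section~\ref{sec:wedging}: it sends a $d$-polytope with $n$ facets to a $(d+1)$-polytope with $n+1$ facets whose diameter is at least as large, so that $H(n,d)\le H(n+1,d+1)$. The second is a ``restriction to a common facet'' argument giving $H(n,d)\le H(n-1,d-1)$ whenever $n<2d$: by Lemma~\ref{lemma:simple} it is enough to bound a \emph{simple} $d$-polytope $P$ with $n$ facets, and then any two vertices $u,v$ of $P$ each lie on exactly $d$ facets, so the inequality $2d>n$ forces some facet $F$ to contain both; since $F$ is a $(d-1)$-polytope with at most $n-1$ facets and $G(F)$ is a subgraph of $G(P)$, we get $\operatorname{dist}_{G(P)}(u,v)\le\operatorname{dist}_{G(F)}(u,v)\le\diam(G(F))\le H(n-1,d-1)$, the last step using that $H(\cdot,d-1)$ is non-decreasing in its first argument (for instance by truncating vertices).

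Granting these, the inequality $H(d+k,d)\le H(2k,k)$ is obtained by walking along the line $n-d=k$ from $(d+k,d)$ to $(2k,k)$ in whichever direction is available. If $k\ge d$, I would apply the wedge $k-d$ times to a diameter-maximal $d$-polytope with $d+k$ facets; the result is a $k$-polytope with $(d+k)+(k-d)=2k$ facets of at least the same diameter. If $k<d$, I would instead apply the restriction step $d-k$ times along the chain $(d+k,d),(d+k-1,d-1),\dots,(2k,k)$; this is legitimate because $d+k-j<2(d-j)$ precisely when $j<d-k$, so the hypothesis $n<2d$ holds at each of the $d-k$ steps.

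For equality when $k<d$ one needs the reverse inequality $H(d+k,d)\ge H(2k,k)$, and this is again the wedge: starting from a diameter-maximal $k$-polytope with $2k$ facets and wedging $d-k$ times produces a $d$-polytope with $2k+(d-k)=d+k$ facets whose diameter is at least $H(2k,k)$. Combined with the previous paragraph this yields $H(d+k,d)=H(2k,k)$ for $k<d$; the phrase ``but not only if'' is already witnessed by $k=d$, where the statement reads $H(2d,d)=H(2d,d)$.

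The argument has no deep step once the wedge is in hand: the real content is the construction and basic analysis of the wedge operation in Section~\ref{sec:wedging} — that it adds exactly one facet and one dimension and never decreases the diameter — which I would import as a black box here. The only points requiring care are bookkeeping ones, namely keeping track of the invariant $n-d$ and of the facet counts after iterating, and recognizing that one must go \emph{down} in dimension by restricting when $n<2d$ and \emph{up} in dimension by wedging when $n\ge 2d$, the two regimes meeting exactly at the balanced case $n=2d$.
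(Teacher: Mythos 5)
Your plan is the same as the paper's (proof of Theorem~\ref{thm:dstep-hirsch}): establish $H(n,d)\le H(n+1,d+1)$ via the wedge (Lemma~\ref{lemma:wedge}), establish $H(n,d)\le H(n-1,d-1)$ for $n<2d$ by restricting to a common facet after reducing to the simple case via Lemma~\ref{lemma:simple}, and iterate along the line $n-d=k$. The one place you are actually more careful than the printed proof is in observing that a common facet of a simple $d$-polytope with $n<2d$ facets might have strictly fewer than $n-1$ facets, so that one also needs the (easy, truncation-based) monotonicity of $H$ in its first argument — a point the paper silently elides — but this is a refinement of the same argument rather than a different route.
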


In particular,  to prove (or disprove) the Hirsch conjecture one can concentrate in the case where the number of facets equals twice the dimension. This case is sometimes referred to as the \emph{$d$-step Conjecture}, since the Hirsch conjecture is saying that we can go from any vertex to any other vertex in $d$-steps. Via wedging, the Hirsch conjecture is also equivalent to the following \emph{non-revisiting Conjecture}:
If $u$ and $v$ are two arbitrary vertices of a simple polytope $P$, then there is a path from $u$  to $v$ which at every step enters a facet of $P$ that was not visited before. We prove the equivalence of the three conjectures (Hirsch, $d$-step and non-revisiting) in Section~\ref{sec:equivalences}.

The second ingredient is the  construction by Klee and Walkup~\cite{Klee:d-step} of a 4-dimensional polytope with 9 facets that meets the Hirsch bound with equality (that is, whose diameter equals $9-4=5$). Polytopes with this property are called \emph{Hirsch-sharp}. They are easy to construct with a number of facets not exceeding twice their dimension (e.g., cubes). Klee and Walkup's Hirsch-sharp polytope is the smallest ``non-trivial'' Hirsch-sharp polytope, with more facets than twice its dimension. In fact, it is also the starting block to the construction of every other Hirsch-sharp polytope with $n>2d$ known to date. 
In Section~\ref{sec:Q4} we give our own description and coordinatization (much smaller than the original one) of the Klee-Walkup polytope. 

In Section~\ref{sec:hirsch-sharp} we recount the state of the art on the existence of Hirsch-sharp polytopes, following work of Fritzsche, Holt and Klee~\cite{Fritzsche99morepolytopes,Holt:Hsharpd7,Holt:Many-polytopes}. Their results, combined with what is  known for small dimension or number of facets, are summarized in Table~\ref{table:hirsch-tight-table}, which gives a  ``plot'' of the function $H(n,d) - (n-d)$. The horizontal coordinate is $n-2d$, so that the column labelled ``0'' corresponds to the polytopes relevant to the $d$-step conjecture. The cases where we know $H(n,d)$ exactly are marked ``$=$'' or ``$<$'' depending on whether Hirsch-sharp polytopes exist or not. The cases where Hirsch-sharp polytopes are known to exist but for which the Hirsch conjecture is not proved are marked ``$\ge$''. Cases where we neither know the Hirsch conjecture nor the existence of Hirsch-sharp polytopes are marked ``?'' and appear only in dimensions 4, 5 and 6. The diagonal dots in the left column reflect the equality case of Theorem~\ref{thm:dstep-intro}.
\begin{table}[htb]
\begin{center}
\begin{tabular}{|c|cccccccccc|}
\hline
$n -2d$& $\cdots$&0&1&2&3&4&5&6&7& $\cdots$\\
\hline
$\underline{\quad\ d\ \quad}$ &&&&&&&&&&\\
2  &&$=$ & $<$ & $<$ & $<$ & $<$ & $<$ & $<$ & $<$  & $\cdots $ \\
3  &$\iddots$&$=$& $<$ & $<$ & $<$  & $<$ & $<$ & $<$ & $<$ & $\cdots $ \\
4  &$\iddots$&$=$& $=$ & $<$ & $<$ & $<$ & ? & ? & ? & $\cdots $   \\
5  &$\iddots$&$=$ &$=$&?&?&?&?&?&?&$\cdots $\\
6  &$\iddots$&$=$ &?&?&?&?&?&?&?&$\cdots $\\
7  &$\iddots$& $\ge$ & $\ge$ & $\ge$ & $\ge$ & $\ge$ & $\ge$  & $\ge$ & $\ge$ & $\cdots $   \\
8  &$\iddots $& $\ge$ & $\ge$ & $\ge$ & $\ge$ & $\ge$ & $\ge$  & $\ge$ & $\ge$ & $\cdots $   \\
$\vdots$   &$\iddots$&$\vdots$&$\vdots$&$\vdots$&$\vdots$&$\vdots$&\ \ $\vdots$\ \ &$\vdots$\ \ &$\vdots$& $\ddots$\\
\hline
\end{tabular}
\caption{$H(n,d)$ versus $n-d$, the state of the art}
\end{center}
\end{table}\label{table:hirsch-tight-table}

The Klee-Walkup polytope is also instrumental in the construction by Klee, Walkup an Todd~\cite{Klee:d-step, Todd:MonotonicBoundedHirsch} of counter-examples to two generalizations of the Hirsch conjecture that are quite natural in the context of linear programming: the case of perhaps-\emph{unbounded} polyhedra (which was the original conjecture by Hirsch) and a \emph{monotone} version in which we look at the maximum number of monotone steps with respect to a given linear function that are needed to go from any vertex of a polytope $P$ to an optimal vertex.
We show these constructions in Section~\ref{sec:unbounded-monotone}, and show in Section~\ref{sec:topological} a counter-example, by Mani and Walkup~\cite{Mani:A-3-sphere-counterexample} to a third, topological, version of the conjecture.
%since the boundary complex of a simplicial $d$-polytope is a topological triangulation of the $(d-1)$-dimensional sphere (a simplicial $(d-1)$-sphere, for short) the Hirsch conjecture for polytopes would follow from the statement that in every simplicial $(d-1)$-sphere with $n$ vertices one can travel from any facet (that is, maximal simplex) to any other facet crossing at most $n-d$ ridges. The minimum counterexample known is for spheres of dimension $11$, obtained by wedging eight times in a $3$-sphere with 16 vertices that fails to have the non-revisiting property.

\section{Bounds and algorithms}
\label{sec:bounds-algorithms}
%\section{Bounds and algorithms}
%\label{sec:positive-results}

In this section, we present 
%the best evidence for the Hirsch conjecture known to date, or at least its polynomial version, by presenting 
special cases for which the Hirsch conjecture holds, upper bounds for diameters of polytopes and subexponential complexity results for the simplex method. We also summarize recent work on analogues of the conjecture for hyperplane arrangements and for paths of interior point methods.

\subsection{Small dimension or few facets}\label{sec:small-dimension}

The following statements exhaust all pairs $(n,d)$ for which the maximum diameter $H(n,d)$ of $d$-polytopes with $n$ facets is known. We omit the cases $n < 2d$, because  $H(d+k,d)=H(2k, k)$ for all $k<d$ (see Theorem~\ref{thm:dstep-intro}), and the trivial case $d\le 2$. Remember that an asterisk in front of a statement denotes the proof can be found in~\cite{Kim-Santos-companion}.

\begin{*theorem} [Klee~\cite{Klee:PathsII}]
\label{thm:hirschford3}
$H(n,3) = \lfloor \frac{2n}{3} \rfloor - 1$.
\end{*theorem}

\begin{theorem} 
\label{thm:hirschforsmalln}
\begin{itemize}
\item $H(8,4)=4$ (Klee~\cite{Klee:PathsII}).
\item $H(9,4)=H(10,5)=5$ (Klee-Walkup~\cite{Klee:d-step}).
\item  $H(10,4)=5$, $H(11,5)=6$ (Goodey~\cite{Goodey}).
\item $H(11,4)=H(12,6)=6$ (Bremner-Schewe~\cite{Bremner:DiameterFewFacets}).
\item $H(12,4)=7$ (Bremner et al.~\cite{Bremner:MoreBounds}).
\end{itemize}
\end{theorem}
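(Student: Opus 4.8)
The plan is to establish each equality $H(n,d)=v$ by proving separately the lower bound $H(n,d)\ge v$, witnessed by an explicit polytope, and the upper bound $H(n,d)\le v$, obtained by classification. Two standard reductions organize the work. First, by Lemma~\ref{lemma:simple} it suffices to bound the diameters of \emph{simple} $d$-polytopes with $n$ facets; for a simple polytope the graph $G(P)$ is determined by the face lattice, so $H(n,d)$ is the maximum of a purely combinatorial quantity over the finitely many combinatorial types of simple $d$-polytopes with $n$ facets. Second, Theorem~\ref{thm:dstep-intro} together with the monotonicity supplied by wedging (Section~\ref{sec:wedging}) --- wedging turns a $d$-polytope with $n$ facets and diameter $\delta$ into a $(d+1)$-polytope with $n+1$ facets and diameter at least $\delta$, so for fixed $k=n-d$ the quantity $H(d+k,d)$ is non-decreasing in $d$ and becomes constant, equal to $H(2k,k)$, once $k<d$ --- is what makes the short list above informative: combined with Theorem~\ref{thm:hirschford3} for $d=3$ and triviality for $d\le 2$, it pins down $H(n,d)$ for \emph{all} $(n,d)$ with $n-d\le 6$, and for $d=4$ also with $n-d\le 8$.

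For the \emph{lower bounds}: $H(8,4)\ge 4$ is witnessed by the $4$-cube (Example~\ref{example:basic}), which has $2d=8$ facets and diameter $d=4$; $H(9,4)\ge 5$ is witnessed by the Klee--Walkup polytope $Q_4$ of Section~\ref{sec:Q4}, which is Hirsch-sharp. From $Q_4$ one obtains $H(10,5)\ge 5$ by wedging, and $H(10,4)\ge 5$ by truncating a vertex that does not lie on a diametral pair: truncation replaces a simple vertex by a simplex facet and can only lengthen paths through that vertex, so one always has $H(n+1,d)\ge H(n,d)$. For the remaining, sub-Hirsch pairs $H(11,5)\ge 6$, $H(11,4)\ge 6$ and $H(12,4)\ge 7$, the witnessing polytopes are among the Hirsch-sharp families discussed in Section~\ref{sec:hirsch-sharp} (for $(11,5)$, which meets the bound) or, for the pairs lying strictly below the Hirsch bound, are produced by the very enumeration that yields the matching upper bound.

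For the \emph{upper bounds} the smallest cases were settled by hand: $H(8,4)\le 4$ (Klee) and $H(9,4)=H(10,5)\le 5$ (Klee--Walkup) are the $d$-step conjecture for $d=4$ and $d=5$, while $H(10,4)\le 5$ and $H(11,5)\le 6$ (Goodey) lie one and two facets beyond $2d$. In each of these one enumerates the possible facet--ridge incidence patterns of the simple $4$- and $5$-polytopes in question --- dually, the simplicial polytopes together with their dual graphs --- and checks that the dual diameter never exceeds the claimed value; the reformulation as the non-revisiting conjecture (Section~\ref{sec:equivalences}) is what keeps these analyses finite and tractable. For the larger cases $H(11,4)=H(12,6)\le 6$ (Bremner--Schewe) and $H(12,4)\le 7$ (Bremner et al.) the number of combinatorial types is far beyond pencil and paper, and one resorts to a computer-assisted classification: enumerate all simplicial $d$-polytopes with the prescribed number of vertices --- via Gale diagrams and oriented matroids, so that each combinatorial type is generated exactly once and the non-realizable cases are recognized --- and compute the dual diameter of each. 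Since only pairs of vertices lying on disjoint families of facets can possibly violate the bound, and since symmetries cut the list down further, the individual diameter checks are cheap.

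The step I expect to be the real obstacle is precisely this enumeration. The number of $d$-polytopes with $n$ facets grows explosively as $n-d$ increases, so that even after passing to simple polytopes, invoking the $d$-step reduction, and exploiting symmetry, the classification needed for $(12,4)$ already sits at the boundary of what is computationally feasible --- which is why no pair with larger $n-d$, nor $H(13,4)$, has been settled. A second, unavoidable difficulty in the computer-assisted cases is certifying that the enumeration is genuinely exhaustive, in particular the correct treatment of the oriented matroids that are not realizable as polytopes; here it is the combinatorial-geometry bookkeeping, not the diameter computation itself, that does the real work.
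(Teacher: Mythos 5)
The paper itself gives no proof of Theorem~\ref{thm:hirschforsmalln}: it is a survey statement, and each bullet is simply attributed to the original source (Klee, Klee--Walkup, Goodey, Bremner--Schewe, Bremner et al.); the companion paper~\cite{Kim-Santos-companion} does not include a proof either. So there is no ``paper's own proof'' to compare against, and what you have written is best judged as a reconstruction of the methods used in those original references.

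Read in that light, your outline is a fair and mostly accurate description of how these numbers were obtained: lower bounds from explicit witnesses (the $4$-cube for $H(8,4)\ge4$, the Klee--Walkup polytope $Q_4$ for $H(9,4)\ge5$, its wedge for $H(10,5)\ge5$, and the Holt--Klee constructions of Section~\ref{sec:hirsch-sharp} for $H(11,5)\ge6$); upper bounds by exhaustive, increasingly computer-assisted enumeration of combinatorial types, organized by the non-revisiting reformulation of Section~\ref{sec:equivalences}. The observation that $H(n+1,d)\ge H(n,d)$ via truncation at a non-diametral vertex is correct and standard, though the paper never states it. Two caveats are worth making explicit. First, for the sub-Hirsch pairs $(10,4)$, $(11,4)$, $(12,4)$ the lower bound is not separable from the upper bound the way your first paragraph suggests: the tight examples of diameters $5$, $6$, $7$ come out of the same computer search that establishes exhaustiveness, not from any independent construction, so ``lower bound by explicit polytope'' and ``upper bound by classification'' collapse into one enumeration. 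Second, and more fundamentally, your write-up is a proof \emph{strategy}, not a proof: the entire content of the Bremner--Schewe and Bremner--Deza--Hua--Schewe results is the carrying out and certification of the enumeration you describe in your third paragraph, and that cannot be reproduced on paper. You correctly identify this as ``the real obstacle,'' but in a submitted proof you would need either to run the computation yourself or to cite the original sources for exactly the portions you cannot reproduce --- which is precisely what the paper does by presenting this as a cited theorem rather than proving it.
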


Since $\max_d H(d+k,d)=H(2k,k)$ (see Theorem~\ref{thm:dstep-intro} again), the results for $H(8,4)$, $H(10,5)$ and $H(12,6)$ imply:

\begin{corollary} %[Bremner-Schewe~\cite{Bremner:DiameterFewFacets}]
The Hirsch conjecture holds for polytopes with at most six facets more than their dimension.
\end{corollary}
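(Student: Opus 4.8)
The plan is to reduce the whole statement to the ``$d$-step'' situation by the wedge inequality and then dispatch the finitely many base cases, most of which have already been quoted above.

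First I would restate what is to be proved: for every $d\ge 2$ and every $k$ with $1\le k\le 6$ we must show $H(d+k,d)\le k$, since a $d$-polytope with ``at most six facets more than its dimension'' has $n=d+k$ facets with $1\le k\le 6$ (the case $k=0$ is vacuous, as a full-dimensional $d$-polytope needs at least $d+1$ facets). By the first inequality in Theorem~\ref{thm:dstep-intro} we have $H(d+k,d)\le H(2k,k)$ \emph{for all} $d$; here only this unconditional inequality is used, not the equality clause $k<d$. Hence it suffices to check $H(2k,k)\le k$ for $k=1,2,3,4,5,6$.

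Second I would handle the small values of $k$ directly. For $k=1$ a $1$-polytope with $2$ facets is a segment, so $H(2,1)=1$. For $k=2$ a $2$-polytope with $4$ facets is a quadrilateral, whose graph is a $4$-cycle of diameter $2$, so $H(4,2)=2$ (this is the polygon row of the introductory table). For $k=3$, Theorem~\ref{thm:hirschford3} gives $H(6,3)=\lfloor 12/3\rfloor-1=3$. For $k=4,5,6$ I would simply invoke the values $H(8,4)=4$, $H(10,5)=5$ and $H(12,6)=6$ recorded in Theorem~\ref{thm:hirschforsmalln}. In each case $H(2k,k)\le k$, so combining with the first step, $H(d+k,d)\le H(2k,k)\le k$ for all $d$ and all $1\le k\le 6$; writing $n=d+k$ this is precisely $\diam(G(P))\le n-d$ for every $d$-polytope $P$ with $n\le d+6$ facets.

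There is no genuine obstacle at the level of this corollary: all of the mathematical content sits inside results we are allowed to assume, namely the wedge/$d$-step reduction of Theorem~\ref{thm:dstep-intro} and the finite enumeration results bundled into Theorem~\ref{thm:hirschforsmalln} (in particular the computer-assisted classifications behind $H(8,4)=4$, $H(10,5)=5$ and $H(12,6)=6$). Granting those, the proof is bookkeeping; the only point to be careful about is to use exactly the inequality $H(d+k,d)\le H(2k,k)$ valid for every $d$, rather than its equality case, and to remember that for $k\le 6$ the relevant ``$d$-step'' polytopes $2k\le 12$ have all been settled.
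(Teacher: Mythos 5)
Your proof is correct and takes essentially the same route as the paper: reduce via the $d$-step inequality $H(d+k,d)\le H(2k,k)$ of Theorem~\ref{thm:dstep-intro} to the finitely many cases $H(2k,k)$ for $k\le 6$, and read off $H(8,4)=4$, $H(10,5)=5$, $H(12,6)=6$ from Theorem~\ref{thm:hirschforsmalln} (the cases $k\le 3$ being elementary). You merely make explicit the small-$k$ base cases that the paper leaves implicit.
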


It is easy to generalize one direction of Theorem~\ref{thm:hirschford3}, giving the following lower bound for $H(n,d)$. Observe that the formula gives the exact value of $H(n,d)$ for $d\in\{1,2,3\}$.
\begin{*proposition}
\[
H(n,d) \ge \left \lfloor\frac{d-1}{d} n \right\rfloor - (d-2).
\]
\end{*proposition}

\subsection{General upper bounds on diameters}\label{sec:upper-bounds}

Diameters of polytopes admit a \emph{linear} upper bound when the dimension $d$ is fixed. This was first noticed by  Barnette~\cite{Barnette}  and then improved by Larman~\cite{Larman}:

\begin{*theorem}[Larman~\cite{Larman}]
\label{thm:linear-in-fixed-d}
For every $n>d\ge 3$, $H(n,d)\le n 2^{d-3}$.
\end{*theorem}

But when the number of facets is not much bigger than $d$, a much better upper bound was given by Kalai and Kleitman~\cite{Kalai:Quasi-polynomial}, with a surprisingly simple and elegant proof (the paper is just two pages!).
\begin{*theorem}[Kalai-Kleitman~\cite{Kalai:Quasi-polynomial}]
\label{thm:quasipolynomial}
For every $n>d$, $H(n,d)\le n^{\log_2(d)+1}$.
\end{*theorem}

The proofs of Theorems~\ref{thm:quasipolynomial} and \ref{thm:linear-in-fixed-d} use very limited properties of graphs of polytopes. For example, Klee and Kleinschmidt (see \S 7.7 in \cite{Klee-Kleinschmidt}) show that Theorem~\ref{thm:linear-in-fixed-d} holds for the ridge-graphs of all pure simplicial complexes, and even more general objects. In the same vein, Eisenbrand, H\"ahnle, Razborov and Rothvo\ss~\cite{Eisenbrand:limits-of-abstraction} have recently shown the following generalization of Theorems~\ref{thm:quasipolynomial} and~\ref{thm:linear-in-fixed-d}:

\begin{theorem}[Eisenbrand et al.~\cite{Eisenbrand:limits-of-abstraction}]
\label{thm:eisenbrand}
Let $G$ be a graph whose vertices are certain  subsets of size $d$ of an $n$-element set. Assume that between every pair of vertices $u$ and $v$ in $G$ there is a path  using only vertices that contain $u \cap v$.  

Then,
%\begin{enumerate}\item 
$\diam(G)\le \min\{n^{1+\log d},  n 2^{d-1} \}$.
%\end{enumerate}
\end{theorem}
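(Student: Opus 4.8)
The plan is to mimic the classical proofs of the Kalai--Kleitman and Barnette--Larman bounds, checking that each uses only the stated ``combinatorial'' hypothesis: that the vertices of $G$ are $d$-subsets of $[n]$ and that any two vertices $u,v$ are joined by a path all of whose vertices contain $u\cap v$. Call such a graph \emph{Hirsch-like}. For a subset $S\subseteq[n]$, let $G_S$ be the induced subgraph on vertices containing $S$; note $G_S$ is again Hirsch-like on the ground set $[n]\setminus S$ with parameter $d-|S|$, because if $u,v\supseteq S$ then $u\cap v\supseteq S$ and the connecting path stays inside $G_S$. Also, for a fixed vertex $u$ and a distance parameter $k$, let $B_k(u)$ denote the set of elements of $[n]$ that appear in some vertex at distance $\le k$ from $u$; the ``active'' facets reachable in $k$ steps. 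The whole argument runs by induction on $d$ and on $n$.

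For the quasipolynomial bound, first I would prove the recursion $\diam(G)\le 2\,\ell(n,\lfloor d/2\rfloor) + \ell(\lfloor n/2\rfloor, d) + 1$, where $\ell(n,d)$ denotes the maximum diameter over Hirsch-like graphs with parameters $n,d$ (one has to be slightly careful that the middle term is really controlled by a smaller ground set). The idea, exactly as in Kalai--Kleitman: given $u$ and $v$, consider the balls $B_k(u)$ and $B_k(v)$. If both can be grown to size $>n/2$ within $k$ steps, they intersect in a common element $i$, and then within the subgraph $G_{\{i\}}$ — which has parameter $d-1$ and at most $n$ elements, so diameter at most $\ell(n,d-1)$ — one can connect the two frontier vertices; this gives $\diam(G)\le 2k + \ell(n,d-1)$. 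The halving in the $d$-coordinate (rather than just $d-1$) comes from iterating this observation; choosing $k$ minimal so that $|B_k(u)|>n/2$, one shows $k\le \ell(n,\lfloor d/2\rfloor)$ or so by a secondary induction. Unwinding the recursion $f(n,d)\le f(n,d/2)+f(n/2,d)+O(1)$ with $f(n,1)=O(1)$ yields $f(n,d)=O(n^{\log_2 d})$, and after cleaning up constants one gets the clean form $\diam(G)\le n^{1+\log d}$. This is the part where the bookkeeping is delicate but the underlying combinatorics is genuinely just the Kalai--Kleitman two-page argument; I'd state the key recursion as a lemma and then solve it.

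For the linear-in-fixed-$d$ bound $\diam(G)\le n\,2^{d-1}$, I would follow Barnette/Larman. Fix $u$ and a neighbour set: let $u_1,\dots,u_m$ be the elements of $u$, and for each element $i\in u$ let $G^{(i)}$ be the induced subgraph on vertices \emph{not} containing $i$ — again Hirsch-like, on $n-1$ elements with parameter $d$ (well, parameter $d$ but one fewer ``allowed'' element, so effectively the bound drops). The inductive claim is: from $u$ one can reach, within $2^{d-1}$ steps, a vertex inside $G_S$ for any facet $S$ of size $1$ adjacent in the appropriate sense; more precisely one sets up a recursion $h(n,d)\le 2\,h(n-1,d-1)+1$ or $h(n,d)\le h(n,d-1)+\dots$ that telescopes to $n\,2^{d-3}$ in Larman's sharper version (or $n\,2^{d-1}$ in the form stated here, which is weaker and hence easier). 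The mechanism: to travel from $u$ to $v$, restrict attention to $G_{u\cap v}$; if $u\cap v\neq\emptyset$ recurse in lower dimension; if $u\cap v=\emptyset$, walk from $u$ until entering some facet that also touches $v$, which costs at most one ``round'' of the lower-dimensional diameter, then recurse. Doubling accounts for the two ends $u$ and $v$.

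The main obstacle, and the place I would spend the most care, is ensuring that \emph{both} classical proofs really only use the combinatorial hypothesis and never any geometric fact about polytopes — in particular that $G_S$ inherits the Hirsch-like property (this is immediate from the definition, as noted) and that the ``frontier'' vertices one lands on after $k$ steps genuinely lie in the relevant restricted subgraph so that the inductive hypothesis applies to them. A secondary subtlety is the interaction between the two ground-set sizes in the Kalai--Kleitman recursion: when we pass to $G_{\{i\}}$ the ground set has $n-1\le n$ elements, fine, but when we ``cut'' by the size-$n/2$ ball we must verify that the relevant subproblem really has a ground set of size about $n/2$ and not $n$, or else the recursion does not close to $n^{\log d}$. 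Once those two points are nailed down, the final bound is just $\diam(G)\le\min\{n^{1+\log d},\,n\,2^{d-1}\}$ by taking whichever of the two arguments is better. I would present it as: state the Hirsch-like property and the hereditary lemma; prove the Kalai--Kleitman recursion and solve it; prove the Barnette--Larman recursion and solve it; conclude.
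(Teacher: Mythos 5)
Note first that the paper does not actually prove Theorem~\ref{thm:eisenbrand} --- it is cited to Eisenbrand, H\"ahnle, Razborov and Rothvo\ss\ and is not among the asterisked statements whose proofs appear in the companion paper --- so there is no in-paper argument to compare against; but your overall plan (abstractify Kalai--Kleitman and Barnette--Larman via the hereditary lemma that $G_S$, the induced subgraph on vertices containing a fixed set $S$, is again Hirsch-like) is indeed the content of the result. The first genuine problem is your Kalai--Kleitman recursion. The correct one --- compare the appendix proof of Theorem~\ref{thm:quasipolynomial} --- is
\[
H(n,d)\;\le\;2\,H\!\left(\lfloor n/2\rfloor,\,d\right)\;+\;H(n,\,d-1)\;+\;2,
\]
with the factor of two on the halved-$n$ term (the two ball-growth phases from $u$ and from $v$, each confined to a subgraph using at most $n/2$ facets) and a simple \emph{decrement} of $d$ (the connecting facet $F$ has at most $n-1$ facets of its own and dimension $d-1$). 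Your version, $\ell(n,d)\le 2\ell(n,\lfloor d/2\rfloor)+\ell(\lfloor n/2\rfloor,d)+1$, reverses the roles and introduces a halving of $d$ justified only by an unspecified ``secondary induction''; no such halving occurs in the argument, and a recursion with $d\mapsto d/2$ would telescope to a substantially stronger (hence suspicious, given the near-quadratic lower bounds Eisenbrand et al.\ exhibit) bound. The $\log_2 d$ in the final exponent comes from the binomial coefficient $\binom{\log_2 n+d}{d}$ when the true recursion is unwound, not from halving $d$.

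The Larman half has a more structural gap: you pass to $G^{(i)}$, the subgraph of vertices \emph{not} containing $i$, and assert that it is ``again Hirsch-like.'' It is not, and this is exactly the one-directional nature of the hypothesis. If $u,v\supseteq S$ then $u\cap v\supseteq S$, so the guaranteed connecting path stays inside $G_S$; that is the hereditary lemma you correctly stated. But if $u$ and $v$ merely both \emph{avoid} $i$, nothing prevents the guaranteed path (through vertices containing $u\cap v$) from passing through vertices that do contain $i$, so it need not lie in $G^{(i)}$ and the hypothesis cannot be reapplied there. Any correct abstraction of Barnette--Larman must restrict only to subgraphs of the form $G_S$ --- for instance by tracking the first time a walk from $u$ enters $G_{\{i\}}$ for some $i\in v$ and bounding that entry time --- and your sketch as written relies on a hereditary property that the hypothesis does not provide. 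Until the $G^{(i)}$ step is replaced by one that stays inside the family $\{G_S\}$, this half of the argument does not go through.
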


The novelty in~\cite{Eisenbrand:limits-of-abstraction} is that the authors show that there are graphs with the hypotheses of Theorem~\ref{thm:eisenbrand} and with $\diam(G)\ge c n^2/\log n$, for arbitrarily large $n$ and a certain constant $c$. It is not clear whether this is support against the Hirsch conjecture or it simply indicates that the arguments in the proofs of Theorems~\ref{thm:quasipolynomial} and \ref{thm:linear-in-fixed-d} do not take advantage of properties that graphs of polytopes have and which prevent their diameters from growing. For example, observe that \emph{any} connected graph is valid for the case $d=1$ of Theorem~\ref{thm:eisenbrand}. 
%\end{remark}

%%%%%%%

\subsection{Subexponential simplex algorithms}
\label{sec:subexponential}
Since the Hirsch conjecture is strongly motivated by the simplex algorithm of linear programming, it is natural to ask about the number of iterations needed under particular pivot rules. Most of the proofs for the upper bounds in the previous sections do not give a clue on how to find a short path towards the vertex maximizing a given functional, or even an explicit path between any pair of given vertices. 

Kalai~\cite{Kalai:A-subexponential-randomized} and, independently, Matou\v{s}ek, Sharir and Welzl~\cite{Matousek:A-subexponential-bound}
proved the existence of randomized pivot rules for the simplex method with subexponential running time for arbitrary linear programs.
%$C^R(d,n) \leq \exp(K\sqrt{d \log n})$.
\begin{theorem}[Kalai~\cite{Kalai:A-subexponential-randomized}, Matou\v{s}ek, Sharir and Welzl~\cite{Matousek:A-subexponential-bound}]
\label{thm:randomizedpivot}
There exist randomized simplex algorithms where the expected number of arithmetic operations needed in the worst case is at most $\exp(K\sqrt{d \log n})$, where $K$ is a fixed constant.
\end{theorem}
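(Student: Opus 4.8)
The plan is to follow the approach of the Matou\v{s}ek--Sharir--Welzl analysis of the \textsc{RandomFacet} pivot rule, phrased in the abstract framework of LP-type problems (also called generalized linear programs). First I would recast the linear program as an LP-type problem: the ground set is the set of $n$ inequality constraints, and for a subset $S$ of constraints the value $w(S)$ is the optimum of the LP restricted to those constraints (with a suitable lexicographic tie-break so that optima are unique and bases have size exactly $d$). One checks the two axioms --- \emph{monotonicity} ($w(S)\le w(T)$ for $S\subseteq T$) and \emph{locality} (if $w(S)=w(S\cup\{h\})$ for all $h$ in a basis of $T\supseteq S$, then $w(S)=w(T)$) --- which hold for linear programming in general position. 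The combinatorial dimension of this problem is $d$.

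Next I would describe the recursive randomized algorithm. To solve the problem on a constraint set $H$ with a known basis candidate, pick a constraint $h\in H$ uniformly at random, recursively solve the subproblem on $H\setminus\{h\}$, and then test whether the constraint $h$ is violated by the returned solution; if it is, one "pivots" --- recurse again on $H$ but now with $h$ forced into the basis, reducing the effective combinatorial dimension by one. Let $t(n,d)$ denote the expected number of primitive operations (violation tests / basis computations) on an instance with $n$ constraints and combinatorial dimension $d$. The heart of the proof is the recurrence
\[
t(n,d)\ \le\ t(n-1,d)\ +\ 1\ +\ \frac{1}{n}\sum_{k=1}^{n-d}\, t(n-1,\,d-1),
\]
or rather the sharper version in which the probability that the $i$-th smallest ``enter-able'' constraint is the one that forces a pivot is at most $1/i$, giving a term $\frac{1}{n}\sum_{i} t(n-1,d-1)$ over roughly $n-d$ values. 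The key combinatorial lemma behind that $1/i$ bound is: among the constraints not in the optimal basis, when we delete a random one and recurse, the chance that the newly included pivot constraint is ``expensive'' (i.e. is among the few that genuinely change the basis) decays like $1/i$; this is the classical "at most $\delta$ constraints can be violated and each is equally likely to be the last inserted" argument.

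Then I would solve the recurrence. Unrolling gives $t(n,d)\le \sum_{j} \binom{\text{something}}{j}$-type sums; the standard estimate yields $t(n,d) = O\!\left(\exp\!\big(K'\sqrt{d\log(n-d)}\big)\right)$ for an absolute constant $K'$, and absorbing $\log(n-d)\le\log n$ gives the stated bound $\exp(K\sqrt{d\log n})$. Concretely one shows by induction that $t(n,d)\le e^{2\sqrt{d\ln n}}$ or a similar closed form, choosing the constant so the inductive step with the $1+\frac1n\sum\cdots$ term closes; the $\sqrt{\,\cdot\,}$ in the exponent is exactly what makes $e^{2\sqrt{(d-1)\ln n}}\cdot(1+\text{small})\le e^{2\sqrt{d\ln n}}$ work, since $\sqrt{d}-\sqrt{d-1}\approx \tfrac{1}{2\sqrt d}$.

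The main obstacle, and the place requiring genuine care rather than bookkeeping, is the $1/i$ tail bound in the recurrence --- making precise why the pivot-forcing constraint is expensive only with probability $O(1/i)$ when it is the $i$-th candidate. This rests on a symmetry/exchangeability argument about which element of a random subset is the ``last'' one relative to the optimum, and it is where degeneracy must be handled (via the lexicographic perturbation set up in the first step) so that bases are well-defined and the recursion on combinatorial dimension is valid. Once that lemma is in hand, verifying the LP-type axioms and solving the recurrence are routine, and Kalai's independently-obtained bound arises from an essentially equivalent ``subexponential'' randomized pivot rule analyzed by a dual recursion on the number of facets and the dimension.
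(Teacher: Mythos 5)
The paper does not prove this theorem anywhere: it appears in Section~2, which the authors explicitly describe as written ``in an informative style'' with no proofs, and it carries no asterisk, so no proof is claimed even in the companion paper. The statement is purely a citation of Kalai and of Matou\v{s}ek--Sharir--Welzl, so there is no proof in the paper to compare your sketch against.

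Evaluating your sketch on its own terms: you correctly identify the LP-type framework, the two axioms, the \textsc{RandomFacet}-style recursion, and the fact that the bound comes from a recurrence whose solution is $\exp(O(\sqrt{d\log n}))$. However, the recurrence you actually write down,
\[
t(n,d)\;\le\; t(n-1,d)\;+\;1\;+\;\frac{1}{n}\sum_{k=1}^{n-d} t(n-1,d-1),
\]
collapses to $t(n-1,d)+1+\frac{n-d}{n}\,t(n-1,d-1)$, since the summand does not depend on $k$. That recurrence does not yield a subexponential bound; it is essentially the ``naive'' $d/n$ analysis and leads to bounds of the shape $2^{O(d)}\cdot n$ rather than $\exp(O(\sqrt{d\log n}))$. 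The whole content of the Matou\v{s}ek--Sharir--Welzl (and, dually, Kalai) argument is the refined tail bound you allude to --- that the probability the random constraint forces a recursion into dimension $d-i$ decays like $1/(\text{rank})$, giving a recurrence of the rough shape $t(n,d)\le t(n-1,d)+1+\frac{1}{n-d}\sum_{i=1}^{d} t(n-1,d-i)$, whose solution is the subexponential bound. You correctly flag this as ``the main obstacle requiring genuine care,'' but you do not actually state the correct recurrence or prove the $1/i$ bound, so as written the sketch has a gap exactly at the step that produces the claimed exponent.
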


If we consider Theorem~\ref{thm:randomizedpivot} as an algorithmic analogue of Theorem~\ref{thm:quasipolynomial}, then the following result of Megiddo is the analogue of Theorem~\ref{thm:linear-in-fixed-d}. It says that linear programming can be performed in linear time in fixed dimension:

\begin{theorem}[Meggido~\cite{Megiddo:LPinLT}]
\label{thm:LPinLT}
There are pivot rules  for the simplex algorithm that run in $O(2^{2^d}n)$ time.
\end{theorem}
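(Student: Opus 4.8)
This is Megiddo's theorem that linear programming can be solved in linear time for fixed $d$; the plan is to present it as a \emph{prune-and-search} algorithm recursing on the dimension. Write the problem as: minimize $c\cdot x$ over $\{x\in\R^d : a_i\cdot x\le b_i,\ i=1,\dots,n\}$; after an affine change of variables assume $c=(1,0,\dots,0)$, so we are minimizing $x_1$. The base case $d=1$ is trivial and takes $O(n)$ time. Let $T(n,d)$ be the worst-case running time; the goal is a recurrence of the form $T(n,d)\le T\bigl((1-\varepsilon_d)n,\,d\bigr)+O\bigl(T(n,d-1)\bigr)+O(n)$ with $\varepsilon_d>0$ depending only on $d$, which unwinds to $T(n,d)=O(C_d\,n)$ for a sequence $C_d$ whose recursion forces doubly exponential growth, and one checks $C_d=O(2^{2^d})$.

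The engine is a \emph{side-decision oracle}: given a hyperplane $h=\{x:\gamma\cdot x=\delta\}$, decide whether the optimum $x^*$ lies on the side $\gamma\cdot x<\delta$, on the side $\gamma\cdot x>\delta$, or on $h$. One solves the LP restricted to the flat $h$ --- an LP in dimension $d-1$ --- and, by examining which constraints are active at that restricted optimum and the sign of the objective's gradient transverse to $h$, concludes on which side (if any) the true optimum has moved; this costs $O\bigl(T(n,d-1)+n\bigr)$.

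The pruning step turns the oracle into the discarding of a fixed fraction of constraints. Normalize each constraint in which $x_1$ occurs to the form $x_1\le f_i(\hat x)$ or $x_1\ge g_i(\hat x)$ with $\hat x=(x_2,\dots,x_d)$ and $f_i,g_i$ affine, and pair up constraints of the same type (parallel or coincident pairs are resolved outright). For a pair $x_1\ge g_i$, $x_1\ge g_j$, only $\max(g_i,g_j)$ is ever binding, and the hyperplane $h_{ij}=\{\hat x:g_i(\hat x)=g_j(\hat x)\}\subseteq\R^{d-1}$ separates the region where $i$ is redundant from the region where $j$ is; thus knowing which side of (the lift of) $h_{ij}$ contains $x^*$ lets us drop one of the two. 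We then locate $x^*$ relative to enough of the $h_{ij}$ by a nested search in $\R^{d-1}$ --- pairing the $h_{ij}$, taking medians, and invoking the oracle on instances of geometrically decreasing size so the oracle cost telescopes to $O(T(n,d-1)+n)$ --- in such a way that a fixed positive fraction $\varepsilon_d$ of all the original constraints get eliminated. Iterating this yields the advertised recurrence, and hence the $O(2^{2^d}n)$ bound.

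I expect the main obstacle to be the pruning invariant together with the compounding of constants across the recursion on $d$: one has to verify rigorously that ``$x^*$ lies on the $g_i<g_j$ side of $h_{ij}$'' certifies that constraint $i$ may be deleted without moving the optimum (handling degenerate optima, unbounded or infeasible instances, and constraints not involving $x_1$, each of which needs its own small argument), and then track how many dimension-$(d-1)$ LP solves a single round of dimension-$d$ pruning spawns --- it is this multiplicative blow-up, level by level, that produces the doubly exponential $C_d$ rather than a single-exponential one. A final remark is needed on the wording: the procedure is naturally a prune-and-search algorithm, so to call it a \emph{pivot rule} one observes either that its sequence of constraint eliminations can be followed by a monotone edge-walk on the feasibility polyhedron, or, more modestly, that $O(2^{2^d}n)$ counts the arithmetic operations of this simplex-type method.
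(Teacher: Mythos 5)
The paper does not actually prove this theorem: it is stated as a citation to Megiddo's 1984 paper, with no asterisk marking a companion proof and no argument given in either the main text or the appendix, so there is no ``paper's proof'' to compare against. On its own merits, your sketch is a faithful high-level reconstruction of Megiddo's prune-and-search algorithm, and you correctly locate the two real issues: the compounding of constants across the dimension recursion, and the fact that the procedure is not literally a simplex pivot rule (a looseness that is in the paper's own phrasing, so flagging it rather than ``fixing'' it is the right call).

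The one step that is genuinely underspecified, and which you half-acknowledge, is the phrase ``pairing the $h_{ij}$, taking medians, and invoking the oracle on instances of geometrically decreasing size so the oracle cost telescopes.'' This is where nearly all of the work is. Megiddo's multidimensional search for locating $x^*$ relative to the bundle of critical hyperplanes $h_{ij}\subset\R^{d-1}$ is itself a recursive pairing scheme whose depth grows with $d$: at each level one again forms pairs, intersects, and recurses in one lower dimension before a single oracle query is even made, and only after an entire tower of such queries does one get to certify the side of $x^*$ for a positive fraction of the $h_{ij}$. The fraction $\varepsilon_d$ that can be discarded per round therefore decreases doubly exponentially in $d$, not geometrically, and it is exactly this decay — not a naive ``each round costs $O(T(n,d-1))$'' accounting — that forces the constant $C_d$ in $T(n,d)=O(C_d\,n)$ to be of order $2^{2^d}$ rather than single-exponential. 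To turn your sketch into a proof you would need to set up that inner recursion explicitly, prove its correctness invariant (that the side certificate really permits deleting one constraint of each resolved pair without moving the optimum, including in degenerate and unbounded cases), and then solve the coupled recurrence in both $n$ and $d$; as it stands, the recurrence you write down is not yet justified by the argument preceding it.
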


It is also known that random polytopes have polynomial diameter, which explains why the simplex method seems to work well in practice. The first results in this direction were
proved by  Borgwardt~\cite{Borgwardt} and, independently, Smale~\cite{Smale-AverageLP}, who analyzed the ``average case'' complexity of the simplex method. Average case means that we are looking at a linear program
\[
\text{ Maximize } c\cdot {\bf x}, \text{ subject to } ,
\]
but the entries of $A$, $b$ and $c$ are considered random variables with respect to certain spherically symmetric probability distributions. In Borgwardt's model the simplex method runs in expected polynomial time in the size of the input. Smale shows that in his model, if one of the parameters $d$ and $n-d$ is fixed and the other is allowed to grow, the expected running time is only polylogarithmic. The latter was improved to constant by Megiddo, see~\cite{Megiddo:SurveyLPCompelxity} for details.

Even more surprising is the fact that \emph{every} linear program can be slightly perturbed to one that can be solved in polynomial time. Let us formalize this. Let $P$ be the feasibility polyhedron
\[
P = \{{\bf x} \in \R^d \mid \langle a_i, {\bf x}\rangle \leq b, (i=1,\ldots,n)\}
\]
of a certain linear program. %Then $P$ has dimension $d$ and (at most) $n$ facets.
If we replace the vectors $a_i \in \R^d$ and $b \in \R^n$ with independent Gaussian random vectors with means $\mu_i=a_i$ and $\mu=b$ (respectively), and standard deviations $\sigma \max_i \|(\mu_i,\mu)\|$ we say that we have \emph{perturbed $P$ randomly within a parameter $\sigma$}.
In~\cite{Spielman:WhySimplexUsually}, Spielman and Teng proved that the expected diameter of a linear program that is perturbed within a parameter $\sigma$ is polynomial in $d$, $n$, and $\sigma^{-1}$. 
In~\cite{Vershynin:BeyondHirsch}, Vershynin improved the bound to be polylogarithmic in  $n$ and polynomial only in $d$ and 
$\sigma^{-1}$.

\begin{theorem}[Vershynin~\cite{Vershynin:BeyondHirsch}]
\label{thm:vershynin}
If a linear program is perturbed randomly within a parameter $\sigma$, then its expected diameter of its feasibility polyhedron is $O(log^7 n (d^9 + d^3 \sigma^{-4})$.%Paco, I think the bound we quoted was the earlier proceedings version.
\end{theorem}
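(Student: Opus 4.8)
The plan is to reduce the analysis of the perturbed polyhedron to counting the number of pivot steps performed by a carefully chosen simplex variant — the \emph{shadow-vertex} (parametric) simplex method — and to bound this count in expectation over the Gaussian perturbation. The key point is that the diameter of $P$ is bounded by the length of \emph{any} edge-path connecting two vertices, so it suffices to produce, for an arbitrary pair of vertices $u,v$, a short path; the shadow-vertex method produces exactly such a path by projecting the polyhedron onto a suitable $2$-dimensional plane and walking along the boundary of the projected polygon. First I would recall the Spielman--Teng smoothed-analysis framework: after perturbing the data $a_i, b_i$ within parameter $\sigma$, the expected number of vertices of the $2$-dimensional shadow is polynomial in $n$, $d$ and $\sigma^{-1}$, and each such vertex corresponds to a pivot step. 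The heart of Vershynin's improvement is to iterate and randomize this construction so that the dependence on $n$ drops from polynomial to polylogarithmic.

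The main technical steps, in order, are as follows. \emph{Step 1:} Set up the shadow-vertex method for a fixed $2$-plane and show that the number of edges on the projected polygon equals (up to constants) the expected number of distinct vertices of $P$ whose normal cones meet a fixed great circle of directions; bound this ``shadow size'' by a geometric quantity — essentially the expected total angular measure of the edges — using the smoothed-analysis estimates on distances between vertices and on the angles of the perturbed facets. \emph{Step 2:} Observe that a single shadow may still be long if the starting and target vertices are ``far'' in the projected polygon; to fix this, use a randomized choice of the projection plane together with a divide-and-conquer / random-restart argument, so that with good probability one only needs to traverse a polylogarithmic-in-$n$ fraction of the shadow before either reaching the target or reducing to a lower-dimensional face. \emph{Step 3:} Track the exponents carefully through the union bound over the $n$ facets and the $d$ coordinates of the perturbation; this is what produces the explicit bound $O(\log^7 n\,(d^9 + d^3\sigma^{-4}))$. \emph{Step 4:} Finally, note that the path exhibited uses only edges of $P$, so its length is an upper bound for $\diam(G(P))$, and take expectations.

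I expect \emph{Step 2} to be the main obstacle. Bounding the size of a single random shadow is essentially the Spielman--Teng argument and is by now standard; the genuinely new difficulty is controlling \emph{how far apart} two given vertices can be \emph{within} a shadow, and showing that re-randomizing the plane (or equivalently, re-running with fresh random objective-function directions) makes this distance polylogarithmic rather than polynomial in $n$ with high probability. This requires a delicate concentration argument: one must show that the probability that a random shadow places $u$ and $v$ more than $t$ edges apart decays fast enough in $t$, and then combine $O(\log n)$ independent attempts. Getting the interaction between this concentration bound and the per-shadow size bound right, without losing the polylogarithmic dependence, is the crux of the proof, and is where the large but fixed exponents $7$ and $9$ come from. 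The dependence on $\sigma^{-1}$ enters only through the per-shadow size estimate (smaller perturbations allow longer edges and sharper angles), which is why it appears polynomially but not inside the logarithm.
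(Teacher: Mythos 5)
The survey does not prove this theorem; it merely cites Vershynin~\cite{Vershynin:BeyondHirsch}, so there is no in-paper proof to compare your attempt against. Your outline does correctly identify the ingredients of Vershynin's actual argument --- the shadow-vertex (parametric) simplex method, the Spielman--Teng smoothed-analysis bound on the size of a two-dimensional shadow, and a re-randomization scheme that drops the dependence on $n$ from polynomial to polylogarithmic --- and your Step~4 observation that the explicit edge-path produced by the algorithm bounds $\diam(G(P))$ is exactly the remark the survey makes immediately after the theorem. So as a description of \emph{which} paper one should read, and roughly in what order, this is on target.

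But it is not a proof, and you say so yourself. Steps~1--3 are stated as goals rather than established: the crucial concentration estimate in Step~2, which is precisely where the polylogarithmic factor must come from, is left completely unproved, and the explicit exponents $7$, $9$, $3$, $4$ in $O(\log^7 n\,(d^9 + d^3 \sigma^{-4}))$ are never derived or even heuristically traced. A blind verifier cannot check that this roadmap compiles into the stated bound rather than, say, $\log^2 n$ or $d^{12}$. The gap is therefore not a single wrong step but the absence of the quantitative core: the per-shadow size bound, the tail bound on the distance between $u$ and $v$ within a shadow, and the bookkeeping that combines them. For a theorem stated in this survey without proof, a faithful answer would either reproduce Vershynin's estimates or explicitly defer to his paper; the middle ground you occupy --- a plausible plan with acknowledged holes --- is honest but does not establish the result.
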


As mentioned above, this result is not only structural. The simplex method can \emph{find} a path of that expected  length in the perturbed polyhedron.

\subsection{Some polytopes from combinatorial optimization}\label{sec:special}

There are some classes of polytopes of special interest and for the diameters of which we know polynomial upper bounds.

\subsubsection*{Small integer coordinates}

Of special importance in combinatorial optimization are the $0$-$1$ polytopes, in which every vertex has coordinates $0$ or $1$.They satisfy the Hirsch conjecture.

\begin{*theorem}[Naddef~\cite{Naddef:Hirsch01}] 
\label{thm:01hirsch}
If $P$ is a $0$-$1$ polytope then $$\diam(P) \leq \#\text{facets}(P) - \dim(P).$$
\end{*theorem}

As a generalization, Kleinschmidt and Onn~\cite{Kleinschmidt:Diameter} prove the following bound on the diameter of lattice polytopes in $[0,k]^d$. A polytope is called a \emph{lattice polytope} if every coordinate of every vertex is integral.

\begin{theorem}[Kleinschmidt and Onn~\cite{Kleinschmidt:Diameter}]
The diameter of a lattice polytope contained in $[0,k]^d$ cannot exceed $kd$.
\end{theorem}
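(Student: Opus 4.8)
The plan is to induct on the ambient dimension $d$, peeling off one coordinate at a time, in the spirit of the monotone/non-revisiting arguments used elsewhere in the paper. Let $P\subseteq[0,k]^d$ be a lattice polytope and let $u,v$ be two vertices. Pick the last coordinate, say $x_d$, and look at the range of values $x_d$ takes on $P$; by integrality this is a set of integers inside $\{0,1,\dots,k\}$. The idea is to first walk from $u$ along the graph of $P$ to some vertex $u'$ whose $x_d$-coordinate equals that of $v$, doing so using at most (number of distinct integer values strictly between $u_d$ and $v_d$, plus one) edges, and then recurse inside the face $P\cap\{x_d=v_d\}$, which is a lattice polytope in a box $[0,k]^{d-1}$.

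The key step is therefore the following monotone-walk claim: if $w$ is a vertex of $P$ with $w_d>\min_{x\in P}x_d$, then there is an edge of $P$ leaving $w$ along which $x_d$ strictly decreases, and similarly there is an edge increasing $x_d$ if $w_d$ is not maximal. This is the standard fact that the linear functional $\pm x_d$, if not already optimal at $w$, has an improving edge at $w$ (convexity of $P$ and the fact that a vertex which is not a maximizer has a neighbour with larger functional value). Following such edges, from $u$ we reach in finitely many steps a vertex on the face $F^- := P\cap\{x_d = m\}$ where $m=\min_P x_d$, or we can stop as soon as $x_d$ first hits the value $v_d$. Crucially, every time we take such a step and $x_d$ actually changes, it changes to a strictly smaller integer, so the number of steps in which $x_d$ changes is at most the number of integer values in the half-open interval we need to traverse — at most $k$ over the whole walk from the $x_d$-level of $u$ down (or up) to the $x_d$-level of $v$. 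But we must also account for steps along edges on which $x_d$ does \emph{not} change; the cleanest way is to do the walk so that we first move monotonically in $x_d$ until reaching the level $v_d$ (counting only the $x_d$-changing edges, at most $k$ of them — in fact at most $|u_d-v_d|\le k$, but summed over all $d$ coordinates we want the total to be $\le kd$), and then, \emph{staying on the face} $\{x_d=v_d\}$, recurse. Since that face is a lattice polytope in $[0,k]^{d-1}$, by induction it has diameter at most $k(d-1)$, and adding the at most $|u_d-v_d|$ edges used in the first phase gives $\mathrm{diam}(P)\le |u_d-v_d| + k(d-1) \le k + k(d-1) = kd$. The base case $d=1$ is a segment with at most $k+1$ lattice points on it but only two vertices, diameter $1\le k$.

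The main obstacle is making the "first move monotonically, then recurse on a face" decomposition actually rigorous: one has to ensure that after reaching the level $x_d=v_d$ one can restrict attention to the face $P\cap\{x_d=v_d\}$ and that this face contains the vertex we have walked to, is itself a lattice polytope in $[0,k]^{d-1}$, and that $v$ lies in it — all of which are immediate, but one must also check that the monotone descent really can be stopped exactly at the level $v_d$ rather than overshooting to the minimum. This is where the improving-edge fact is used in both directions: while our current $x_d$-value is still above $v_d$ we push it down (there is a decreasing edge), while it is below $v_d$ we push it up; since each such edge changes $x_d$ by at least $1$ and we never cross $v_d$ without landing on it (integrality plus taking the edge that minimizes the step), the number of phase-one edges is at most $|u_d-v_d|$. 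One should be slightly careful that "the edge minimizing the step in $x_d$" exists and is a genuine edge of $P$; this follows because the edges leaving a vertex $w$ span the tangent cone and at least one of them has negative (resp. positive) $x_d$-component whenever $w$ is not the min (resp. max), and among those one picks any. With that bookkeeping in place the induction closes and yields the stated bound $kd$.
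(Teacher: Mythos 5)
There is a genuine gap, and it is in the recursion step. After the monotone phase you want to recurse on $P \cap \{x_d = v_d\}$, which you call a ``face''. But when $v_d$ lies strictly between $\min_P x_d$ and $\max_P x_d$, the hyperplane $\{x_d = v_d\}$ is not a supporting hyperplane, so $P \cap \{x_d = v_d\}$ is a \emph{slice}, not a face, of $P$. Its vertices are generally not vertices of $P$, its edges are not edges of $P$, and---fatally for your induction---its vertices need not be lattice points at all (slicing a lattice polytope by $\{x_d = c\}$ typically produces fractional vertices). So you can neither walk ``inside'' it along edges of $P$ nor apply the inductive hypothesis to it. A second, related, problem is the claim that you ``never cross $v_d$ without landing on it'': integrality gives a lower bound of one on each $x_d$-drop along a monotone edge, not an upper bound, and even the improving edge that decreases $x_d$ the least may overshoot. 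For instance, the lattice triangle with vertices $(0,0)$, $(2,0)$, $(1,2)$ lies in $[0,2]^2$ but has no vertex on the line $x_2=1$; any monotone descent in $x_2$ starting at $(1,2)$ jumps straight to $x_2=0$.

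The fix that preserves your coordinate-by-coordinate strategy is to route to a genuine face. Let $m=\min_P x_d$ and $M=\max_P x_d$; both $F^-=P\cap\{x_d=m\}$ and $F^+=P\cap\{x_d=M\}$ are faces of $P$, hence lattice polytopes in a copy of $[0,k]^{d-1}$. Walk \emph{both} $u$ and $v$ monotonically in $x_d$ down to vertices $u',v'$ on $F^-$, using at most $(u_d-m)+(v_d-m)$ edges, or walk both up to $F^+$ using at most $(M-u_d)+(M-v_d)$ edges; since these two counts sum to $2(M-m)\le 2k$, the cheaper option costs at most $k$ edges. Then apply the induction hypothesis inside that face to join $u'$ and $v'$ in at most $k(d-1)$ more edges, for a total of $kd$. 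This is essentially the Kleinschmidt--Onn argument, which can also be run globally in one step with the single linear functional $\phi(x)=\sum_i x_i$: a $\phi$-monotone path from any vertex to the $\phi$-maximizer (resp.\ minimizer) gains (resp.\ loses) at least one unit of $\phi$ per edge, so routing $u$ and $v$ through the maximizer costs at most $2\max_P\phi-\phi(u)-\phi(v)$ and through the minimizer at most $\phi(u)+\phi(v)-2\min_P\phi$; these sum to $2(\max_P\phi-\min_P\phi)\le 2kd$, so the shorter route has length at most $kd$.
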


However, existence of a polynomial pivot rule for the simplex method in $0$-$1$ polytopes is open. The proof of Theorem \ref{thm:01hirsch} constructs a short path from $u$ to $v$ only assuming that we know the coordinates of both.

\subsubsection*{Network-flow polytopes}

A network flow polytope is defined by an arbitrary directed graph $G=(V,E)$ with weights given to its vertices. Negative weights represent demands and positive weights represent supplies. A flow is an assignment of non-negative numbers to the edges so as to cancel all the demands and supplies. See~\cite{Cunningham:Theoretical-properties},~\cite{Goldfarb:Polynomial-simplex}, and~\cite{Orlin:PolytimeNetworkSimplex} for details.

For any network $G$ with $e$ edges and $v$ vertices, 
every sufficiently generic set of vertex weights produces a simple $(e-v+1)$-dimensional polytope with at most $2e$ facets. Its diameter has the following almost quadratic upper bound. The proof yields a polynomial time pivot rule for the simplex method on these polytopes.

\begin{theorem}[\cite{Cunningham:Theoretical-properties,Goldfarb:Polynomial-simplex,Orlin:PolytimeNetworkSimplex} ]
\label{thm:network-flow}
The diameter of the network flow polytope on a directed graph $G=(V,E)$ is $O(ev \log v)$. This, in turn,  is $O(n^2\log n)$, where $n$ is the number of facets of the polytope.
\end{theorem}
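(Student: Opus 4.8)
The plan is to read the statement through the network simplex method: the diameter bound is essentially a corollary of the known fact that, under a suitable pivot rule, network simplex terminates after a strongly polynomial number of non-degenerate pivots.

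\emph{Setting up the dictionary.} After a generic perturbation of the vertex weights, which can only increase the diameter (cf.\ the discussion preceding Lemma~\ref{lemma:simple}), we may assume $P$ is simple of dimension $e-v+1$; and after discarding arcs that carry no flow in any feasible solution — whose lower-bound constraints are then redundant, so that $P$ and hence $G(P)$ are unchanged — we may assume $n=\Theta(e)$, with $v\le e+1=O(n)$ when $G$ is connected. Now the basic feasible solutions of $P$ are in bijection with the spanning trees $T$ of $G$: each $T$ carries a unique feasible flow $x_T$ supported on its arcs, and the $x_T$ are exactly the vertices of $P$. Two vertices $x_T$ and $x_{T'}$ are adjacent in $G(P)$ precisely when they are related by one \emph{non-degenerate} network simplex pivot: a non-tree arc $a$ enters, the unique cycle of $T+a$ is located, flow is pushed around it, and the first tree arc to drop to $0$ leaves. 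Degenerate pivots keep the flow fixed and do not move us along $G(P)$.

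\emph{From pivot counts to diameter.} Given any two vertices $u=x_T$ and $v=x_{T'}$, pick a generic linear functional maximized uniquely at $v$ and run network simplex from $u$. Every non-degenerate pivot it performs is an edge of $G(P)$, and the run necessarily ends at $v$; hence $\operatorname{dist}_{G(P)}(u,v)$ is at most the number of non-degenerate pivots performed. It therefore suffices to exhibit one pivot rule that always performs at most $O(ev\log v)$ non-degenerate pivots, which is exactly the content of the polynomial and strongly polynomial network simplex analyses cited in the statement. The engine of those results — and the part I expect to be the real obstacle — is a scaling-plus-arc-fixing argument: one proceeds in geometric phases, in the phase of threshold $\Delta$ pushing flow only around residual cycles whose bottleneck (relative to the current flow and the target/optimal flow) is at least $\Delta$, halving $\Delta$ when no such cycle remains; an ``abundance'' lemma then shows that each block of $O(e\log v)$ non-degenerate pivots pins at least one further arc to its final value permanently, so that after $O(e\cdot v\log v)$ such pivots every arc is pinned and the method has stopped. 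Turning this into a rigorous proof requires a potential argument bounding the pivots within a phase (for instance, on reduced costs or on the cycle structure of the current basis tree), a proof that pinned arcs stay pinned, and care that degeneracy does not inflate the count — routine in spirit but genuinely technical.

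\emph{Back to facets.} Since $n=\Theta(e)$ and $v=O(n)$ for a connected network, $ev\log v=O(n^2\log n)$; for disconnected $G$ one applies the bound component by component. Two points I would be careful about are that every pivot we count is genuinely flow-changing, so that the count really upper-bounds graph distance, and that the passage to generic weights and to an irredundant facet description is carried out before invoking the simple-polytope structure of $P$.
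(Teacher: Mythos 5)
The paper itself gives no proof of Theorem~\ref{thm:network-flow}: Section~\ref{sec:bounds-algorithms} explicitly states that it contains no proofs, the theorem carries no asterisk (so it is not proved in the companion paper either), and the three references in its statement are offered as the proof. So there is nothing to compare against line by line. What you have written is the standard and correct \emph{reduction} that turns the cited pivot-complexity results into a diameter bound: vertices of the (generically perturbed, hence simple) polytope correspond to spanning-tree flows, edges of $G(P)$ correspond to non-degenerate pivots, and running network simplex toward a generic objective maximized at the target vertex produces a monotone path whose length is at most the number of non-degenerate pivots. That reduction is sound (with the small caveat, which you implicitly handle via genericity, that only trees inducing a \emph{feasible} flow give vertices, and that degeneracy must be killed before trees and vertices are in bijection). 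However, the heart of the theorem --- the $O(ev\log v)$ bound on non-degenerate pivots --- you only sketch and then explicitly punt to the same references, so this is a correct framing of how one would use Cunningham, Goldfarb--Hao and Orlin rather than an independent proof. Given that the survey itself does the same thing, this is not a defect, but you should be aware that the technical content you label ``routine in spirit but genuinely technical'' is in fact essentially all of the theorem; and you should double-check that the specific strongly polynomial bound you quote really matches one of the cited papers rather than being an amalgam, since the three references prove somewhat different pivot bounds by somewhat different scaling/arc-fixing arguments.
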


The matrices defining network flow polytopes are examples of \emph{totally unimodular} matrices, meaning that all its subdeterminants are $0$, $1$, or $-1$. Polytopes defined by these matrices still have polynomially bounded diameters, although the degree in the bound is much worse than the one for network flow polytopes:

\begin{theorem}[Dyer and Frieze~\cite{Dyer:RandomWalks}]
For any totally unimodular $n\times d$ matrix $A$
the diameter of the polyhedron  $\{{\bf x} \in \R^d : A{\bf x} \leq c \}$   is $O(d^{16}n^3(\log(dn))^3)$.
\end{theorem}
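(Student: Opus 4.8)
The plan is to deduce the bound from the analysis of a randomised simplex-type pivot rule, following the principle that a polynomial bound on the expected number of pivots of \emph{some} rule gives a polynomial bound on the diameter. First I would reduce to a monotone-path statement: given vertices $u$ and $v$ of $P=\{x\in\R^d:Ax\le c\}$, choose $w$ in the interior of the normal cone of $P$ at $v$, so that $v$ is the unique maximiser of $w^{\top}x$ over $P$ and this linear program is bounded; any monotone edge-path from $u$ to $v$ for the objective $w$ is then a path in $G(P)$, so it suffices to bound the length of a shortest such path uniformly over $u$ and $w$. If a randomised pivot rule started at $u$ reaches $v$ in expected $N$ steps, then by Markov's inequality a monotone path of length $<2N$ exists, so the task becomes to bound the expected number of pivots by $O(d^{16}n^3(\log dn)^3)$. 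Two points are essential. First, \emph{this bound must not depend on the bit-lengths of $c$ or $w$} --- the $w$ above can be enormous --- and that is precisely what total unimodularity will provide. Second, one cannot first pass to a simple polytope as in Lemma~\ref{lemma:simple}, because a generic perturbation of $A$ destroys total unimodularity; so the rule is run on feasible \emph{bases}, and degeneracy is handled symbolically (a lexicographic rule applied to $w$, leaving $A$ untouched) rather than by a numerical perturbation. (For unbounded $P$ one simply notes that the pivot rule only ever traverses bounded edges. Dyer and Frieze phrase their algorithm on the dual; the passage to the primal edge-diameter is a short additional step I would include but not dwell on.)

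The structural input from total unimodularity, which drives the whole estimate, is that every nonsingular $d\times d$ submatrix $A_B$ of $A$ satisfies $\det A_B=\pm1$, so $A_B^{-1}=\pm\operatorname{adj}(A_B)$ has all entries in $\{0,\pm1\}$. From this: (i) by Cramer's rule each coordinate of each vertex of $P$ is a $\{0,\pm1\}$-combination of at most $d$ of the entries of $c$; (ii) the primitive direction of every edge of $P$ is a vector with entries in $\{0,\pm1\}$, and the gain $w^{\top}(y-x)$ at a pivot from $x$ to an adjacent vertex $y$ is a ratio of such subdeterminant-controlled quantities times entries of $w$ and of $c$, so that \emph{ratios} of pivot gains, and of objective gaps between vertices, are bounded by $\mathrm{poly}(n,d)$ independently of any bit-lengths; and (iii) the dual feasible region $\{y\ge0:A^{\top}y=w\}$ is cut out by $A^{\top}$, again totally unimodular, so (i)--(ii) have dual analogues. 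In the totally unimodular world these facts substitute for the genericity we cannot afford: their role is to make every ``range'' and ``gap'' that occurs in the analysis a polynomial in $n$ and $d$, rather than a quantity depending on the size of $c$ or $w$.

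The heart of the proof is the design and analysis of the random walk. I would let the chain move on feasible bases, choosing at a non-optimal basis a pivot whose entering/leaving pair is random, biased so that improving pivots are favoured while enough probability stays on ``sideways'' moves that the walk cannot be trapped in a thin sliver of near-optimal vertices. With a scale-invariant potential $\Phi$ --- morally the logarithm of the current objective gap $w^{\top}v-w^{\top}x$, normalised by the gap one step earlier, together with a term recording distance from the optimal face --- one shows, using (i)--(iii), that each step decreases $E[\Phi]$ by at least $1/\mathrm{poly}(n,d)$; equivalently, one establishes a conductance lower bound for the chain and invokes the corresponding hitting-time estimate. Summing telescopically, the expected number of pivots is $\mathrm{poly}(n,d)$ times $O(\log(dn))$, and the exponents $16$ and $3$ and the cube on the logarithm are pure bookkeeping: they accumulate from the number of ``phases'' needed to drive the gap down, the loss incurred in replacing a worst-case improving pivot by a random one, the contribution of the dual side (hence the inflated exponents), and the overhead of bounding runs of degenerate pivots.

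The step I expect to be the main obstacle is precisely this per-step progress, or conductance, estimate: that a \emph{random} improving pivot makes non-negligible expected progress relative to the current gap, uniformly over all feasible bases. The danger is that at a given vertex nearly all improving edges are almost $w$-orthogonal and only a few are ``good'', so total unimodularity must be pushed to show that the good pivots are neither too rare nor too short, and, at the same time, to rule out the walk's spending super-polynomially many steps drifting along a near-optimal ridge. Controlling degeneracy --- bounding the number of consecutive basis changes that leave the current vertex fixed, and verifying that these cannot derail the potential argument --- is the second delicate point. Once these two estimates are in place, assembling them into the stated bound is routine.
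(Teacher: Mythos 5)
The paper only cites this result from Dyer and Frieze~\cite{Dyer:RandomWalks} without proof (and it is not among the results proved in the companion appendix), so there is no in-paper argument to measure your proposal against; I can only assess it on its own terms. Your high-level architecture is the right one --- bound the graph diameter by the expected number of pivots of a randomised simplex-type rule, and use total unimodularity to make every scale in the analysis a polynomial in $n$ and $d$ rather than in bit-lengths --- and the three consequences of total unimodularity you isolate (vertex coordinates and edge directions controlled by unit subdeterminants, and the same for the dual) are correct and genuinely load-bearing. Your remarks on why one cannot first perturb to a simple polytope and on how to handle unboundedness are also sound.

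The proposal is nonetheless a plan and not a proof, because it leaves the one essential step unproved, and you say so yourself: ``the step I expect to be the main obstacle is precisely this per-step progress, or conductance, estimate.'' The sentence ``one shows, using (i)--(iii), that each step decreases $E[\Phi]$ by at least $1/\mathrm{poly}(n,d)$'' is a restatement of what must be proved, not an argument for it. Nothing in (i)--(iii) by itself excludes the bad scenario you name --- a vertex where almost every improving edge is nearly $w$-orthogonal and only a few are ``good'' --- and ruling that out, uniformly over all vertices and all $w$, \emph{is} the theorem. The potential you gesture at (``the logarithm of the current objective gap $\dots$ normalised by the gap one step earlier'') is not well defined as a potential on states, and it is not clear what chain it certifies. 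Likewise, declaring the exponents $16$, $3$, and the cubed logarithm to be ``pure bookkeeping'' is not credible until there is a concrete chain and a concrete conductance lower bound to feed into the bookkeeping. To close the gap you would need the actual Dyer--Frieze argument, which (as far as I recall) rests on a geometric random-walk and rounding analysis tailored to totally unimodular systems rather than the direct potential argument on bases that you sketch; in any case, reconstructing their per-step estimate is precisely the missing ingredient.
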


\subsubsection*{Transportation and dual transportation polytopes}

Given vectors $a \in \R^p$ and $b \in \R^q$,  the $p \times q$ transportation polytope defined by $a$ and $b$ is
the set of all ${p \times q}$ non-negative matrices with row sums given by $a$ and column sums given by $b$:
\[
T_{p,q}(a,b) = \{(x_{ij}) \in \R^{p \times q} \mid \sum_{j} x_{ij}=a_i, \sum_{i} x_{ij}=b_j, x_{ij} \geq 0\}. 
\]
As an example, the Birkhoff polytope, whose vertices are the permutation matrices, is the transportation polytope obtained with $p=q$ and $a=b=(1,\dots,1)$.

It is easy to show that (generically) $T_{p,q}(a,b)$ is a $(p-1)(q-1)$-dimensional polytope with at most $pq$ facets.
Thus, the Hirsch conjecture translates to its diameter being at most $p+q-1$.

Transportation polytopes are a special case of network flow polytopes; they arise when the network is a  complete bipartite graph on $p$ and $q$ nodes with all edges directed in the same direction. In particular, Theorem~\ref{thm:network-flow} gives an almost quadratic bound for their diameters. But  Brightwell et al.~\cite{Brightwell:LinearTransportation} have recently proved a \emph{linear} bound, with a multiplicative factor of eight. This has now been improved to:

\begin{theorem}[Hurkens~\cite{Hurkens:Diameter4p}]
The diameter of any $p \times q$ transportation polytope is at most $3(p+q-1)$.
\end{theorem}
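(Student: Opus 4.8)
\medskip
\noindent\textbf{Proof proposal.}
By Lemma~\ref{lemma:simple} it suffices to bound the diameter of \emph{simple} transportation polytopes, and a generic perturbation of the marginals keeps $T:=T_{p,q}(a,b)$ a transportation polytope while making it non-degenerate, so assume $T$ is simple. We use the classical dictionary: a vertex of $T$ is the matrix supported on a spanning tree $S$ of the complete bipartite graph $K_{p,q}$, its $p+q-1$ entries being uniquely determined by $a$ and $b$; two vertices are adjacent in $G(T)$ exactly when their trees differ in one edge, the corresponding pivot being: insert a non-tree edge $e$, form the unique cycle it creates, and push flow around that cycle until some tree edge drops to $0$ and leaves. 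So we must bound the number of such pivots needed to pass from one spanning tree $S$ (vertex $x$) to another $S'$ (vertex $y$).

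The plan is to induct on $p+q$, each step passing to a transportation polytope with one fewer row or column. Assuming $q\le p$ (transpose $T$ otherwise), pick a row $r$ with $a_r=\min_i a_i$ and a column $c$ with $b_c=\max_j b_j$; then $a_r\le\frac1p\sum_i a_i=\frac1p\sum_j b_j\le\frac1q\sum_j b_j\le b_c$. The set $F=\{z\in T:\ z_{rj}=0\text{ for all }j\neq c\}$ is a face of $T$, its graph is an induced subgraph of $G(T)$, and it is affinely isomorphic to the transportation polytope $T_{p-1,q}(a',b')$ with $a'=(a_i)_{i\neq r}$ and $b'=b-a_r e_c$ (the inequality $a_r\le b_c$ is exactly what makes $b'$ nonnegative); a vertex of $T$ lies in $F$ iff in its tree the row $r$ is the single leaf edge $\{r,c\}$. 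Hence it is enough to move $x$ to a vertex $x'\in F$ and $y$ to a vertex $y'\in F$ and then join $x'$ to $y'$ \emph{inside} $F$, which by the inductive hypothesis costs at most $3\big((p-1)+q-1\big)$ pivots. Iterating this until the polytope becomes a single point (a one-row or one-column transportation polytope), one obtains a path from $x$ to $y$ assembled from at most $p+q-2$ such reduction steps, and what remains is to show that the pivots spent on all of the ``make row $r$, resp.\ column $c$, into the leaf $\{r,c\}$'' moves total at most $3(p+q-1)$.

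This total bound is the heart of the matter and must be argued \emph{globally} rather than level by level. From a given vertex $v$, turning row $r$ into a leaf at $c$ is done by repeatedly inserting the edge $\{r,c\}$ and pivoting out an edge incident to $r$; the obstacle is that the blocking edge of the fundamental cycle need not be incident to $r$, so occasionally one must first perform an auxiliary pivot to ``free'' row $r$. The Key Lemma to prove is that these auxiliary pivots can be organized — using the choice of $r$ and $c$ above, which keeps all intermediate sub-marginals feasible and prevents vertex degrees from concentrating badly — so that a charging scheme assigning every pivot of the whole algorithm to an edge of a current or target tree charges each edge only $O(1)$ times; calibrating this scheme is what produces the constant $3$, improving the constant $8$ of Brightwell et al.~\cite{Brightwell:LinearTransportation}. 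The base cases $p=1$ (in particular $p=q=1$, where $T$ is a point of diameter $0$) then close the induction.

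The main obstacle is precisely this quantitative Key Lemma. A naïve ``make a node a leaf, then recurse'' argument loses a multiplicative factor at each level (that is roughly how one first gets bounds of the form $8(p+q-1)$), and bringing the coefficient down to $3$ needs both the amortized accounting of auxiliary pivots across the whole run and the specific choice of which node to eliminate — the lightest row parked into the heaviest column — that makes that accounting go through. Everything else (reduction to simple polytopes, the tree–vertex dictionary, identifying the face $F$ with a smaller transportation polytope, and the base cases) is routine.
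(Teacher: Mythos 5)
The paper itself offers no proof of this theorem: it is attributed to Hurkens via a 2007 personal communication, it is not asterisked (so there is no proof in the companion~\cite{Kim-Santos-companion} either), and no published argument is referenced. So there is nothing in the paper to compare your proposal against.

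On its own terms, your proposal is an outline with the decisive step missing, and you say so yourself. The reduction framework is sound: the tree--vertex dictionary for transportation polytopes, the face $F=\{z\in T: z_{rj}=0,\ j\ne c\}$ being affinely isomorphic to a $(p-1)\times q$ transportation polytope, the feasibility condition $a_r\le b_c$ guaranteed by picking the lightest row and heaviest column, and the observation that $p+q-2$ such reductions collapse the polytope to a point are all correct and routine. But the entire content of the theorem is the claim that the pivots spent over all $p+q-2$ ``park row $r$ at column $c$'' phases can be charged so the total is $3(p+q-1)$, and that is precisely what you label the ``Key Lemma'' and leave unproved. You do not specify the charging scheme, do not show each target tree edge is charged $O(1)$ times, and do not show where the constant $3$ comes from (as opposed to the $8$ of Brightwell et al.); the phrase ``prevents vertex degrees from concentrating badly'' is an aspiration, not an argument. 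Absent that lemma, what you have is not a proof of the $3(p+q-1)$ bound but a plausible architecture for one, plus an acknowledgment that the hard part remains. To be convincing you would need, at minimum, a precise statement of the potential or charging function, an invariant maintained through the reductions, and a per-edge accounting that sums to at most $3(p+q-1)$, including the auxiliary pivots that occur when the blocking edge of the fundamental cycle of $\{r,c\}$ is not incident to $r$.
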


In the context of linear programming, for every $d$-polyhedron with $n$ facets there is a dual $(n-d)$-polyhedron with the same number of facets. Every linear program can be solved in its ``primal'' or ``dual'' polyhedron. The optimum achieved is the same in both, but the complexity of the algorithm may not.

The linear programming duals of $p\times q$ transportation polytopes are $(p+q-1)$-polyhedra with $pq$ facets, Balinski~\cite{Balinski:DualTransportation} proved the Hirsch conjecture for them.

\begin{theorem}[Balinski~\cite{Balinski:DualTransportation}] 
Let $C$ be a $p \times q$ matrix. The diameter of the dual transportation polytope 
$D_{p,q}(C)$
% \cap \{(u_1,\dots,u_p,v_1,\ldots,v_q) \in \R^{p+q} \mid u_1 = 0\}\]
is at most $(p-1)(q-1)$. This bound is the best possible and it yields a polynomial time dual simplex algorithm.
\end{theorem}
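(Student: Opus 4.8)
The plan is to translate the statement into a purely combinatorial claim about spanning trees of the complete bipartite graph $K_{p,q}$, settle it there, and then read off the algorithmic consequence. After the harmless normalization $v_q=0$, the dual transportation polyhedron is
\[
D_{p,q}(C)=\{(u,v)\in\R^{p}\times\R^{q}:\ v_q=0,\ u_i+v_j\le c_{ij}\text{ for all }i,j\},
\]
a pointed polyhedron of dimension $d=p+q-1$ with exactly $n=pq$ facets $F_{ij}=\{u_i+v_j=c_{ij}\}$, so $n-d=(p-1)(q-1)$: the bound in the theorem is \emph{precisely} the Hirsch bound for $D_{p,q}(C)$. By Lemma~\ref{lemma:simple} I may assume $C$ generic, so $D_{p,q}(C)$ is simple, and I would use the standard transportation-tableau dictionary: vertices correspond to \emph{dual-feasible bases}, i.e. spanning trees $T$ of $K_{p,q}$ for which the unique $(u,v)$ with $u_i+v_j=c_{ij}$ on $T$ (and $v_q=0$) satisfies $c_{k\ell}-u_k-v_\ell\ge 0$ for all $(k,\ell)$; the vertex of $T$ lies on $F_{ij}$ exactly when $(i,j)\in T$; and two vertices are adjacent exactly when their trees differ by a single edge-swap $T\mapsto T-e+f$. (Only bounded edges count; some edges of $D_{p,q}(C)$ are rays.)

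Next I would reduce the problem to the non-revisiting conjecture for this class. Recall that in a simple $d$-polytope with $n$ facets a \emph{non-revisiting} path --- one that never re-enters a facet after leaving it --- has length at most $n-d$: the facets \emph{entered} along such a path are pairwise distinct (re-entry is forbidden) and none of them contains the starting vertex (otherwise the path would have left it earlier and then re-entered it), while only $n-d$ facets miss the starting vertex. Hence it suffices to produce, between any two vertices $T_0$ and $T_1$, a non-revisiting path; it then automatically has length at most $(p-1)(q-1)$. In fact I would aim for a path of length $|T_0\setminus T_1|$, which is $\le(p-1)(q-1)$ for free: since $|T_0|=|T_1|=p+q-1$ and $T_0\cup T_1$ has at most $pq$ edges, $|T_0\cap T_1|\ge 2(p+q-1)-pq$.

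The heart of the matter is the construction of that path, and this is the step I expect to be the main obstacle. The natural attempt is to perform only exchanges that are monotone toward the target: at a current tree $T\neq T_1$, choose an edge $f=(i,j)\in T_1\setminus T$, let $\gamma$ be the unique cycle $f$ forms with $T$, note $\gamma\not\subseteq T_1$ (a tree has no cycle) so $\gamma$ meets $T\setminus T_1$, and perform the pivot entering $f$ --- drive the slack $c_{ij}-u_i-v_j$ down to $0$ --- choosing the leaving edge $e$ by the dual-simplex minimum-ratio (lexicographic) rule so that dual feasibility is preserved. If $e$ can always be taken inside $T\setminus T_1$, then each step permanently adds one target edge and permanently deletes one non-target edge; after $|T_0\setminus T_1|$ steps we reach $T_1$, and the path is non-revisiting by construction. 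Proving that one can always route so that no edge of $T_1$ is ever deleted and no edge outside $T_0$ is ever reused --- equivalently, establishing the non-revisiting conjecture for dual transportation polytopes, which is false for general polytopes --- is exactly where Balinski's insight lies, and it must exploit the bipartite combinatorics of $K_{p,q}$ and the structure of the slack functions. A robust version maintains, for each facet, the interval of steps during which it is visited, and shows these intervals can be kept contiguous even if an occasional ``sideways'' pivot is forced; the $n-d$ bound from the previous paragraph then still applies.

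For sharpness one exhibits, for every $p$ and $q$, a cost matrix $C$ for which $D_{p,q}(C)$ is Hirsch-sharp, i.e. has two vertices at distance $(p-1)(q-1)$ (Balinski); for $p=q=2$, $D_{2,2}(C)$ is a tetrahedron and the value $1$ is attained, and the general construction builds on this. Finally, the construction above is literally a pivot rule for the dual simplex method on the transportation LP: it maintains dual feasibility, one pivot costs $O(pq)$ arithmetic operations (propagate $(u,v)$ along the current spanning tree, recompute the $pq$ slacks, run the minimum-ratio test), and it reaches the optimum within $O((p-1)(q-1))$ pivots, giving an $O\big((pq)^2\big)$ --- hence strongly polynomial --- algorithm.
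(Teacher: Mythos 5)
The paper does not actually give a proof of Balinski's theorem --- it is one of the results stated without proof or asterisk and simply cited from Balinski's 1984 paper --- so I can only assess your argument on its own merits. Your setup is sound: the normalization $v_q=0$ gives a polyhedron of dimension $p+q-1$ with $pq$ facets, so that $(p-1)(q-1)$ is exactly the Hirsch bound; the dictionary between vertices, dual-feasible spanning trees of $K_{p,q}$, and adjacency via single edge-swaps is standard; and your argument that a non-revisiting path in a simple polytope has length at most $n-d$ is correct.

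The gap, which you yourself flag, is the entire content of the theorem: you never prove that between two dual-feasible trees $T_0$ and $T_1$ a non-revisiting path exists. You propose to enter edges $f\in T_1\setminus T$ one at a time and choose the leaving edge by the dual-simplex minimum-ratio rule, and observe that the argument closes \emph{if} the leaving edge can always be taken in $T\setminus T_1$. That conditional is not established, and it is not obvious: the minimum-ratio rule is dictated by the cost data and can in general force out an edge of $T\cap T_1$. Your fallback ``robust version'' (maintaining contiguous visitation intervals even when sideways pivots occur) is a restatement of the non-revisiting property, not a proof of it. This is precisely where Balinski's combinatorial insight about the bipartite tree structure is needed, and without it the argument proves only that $(p-1)(q-1)$ would be the Hirsch bound --- it does not prove the bound holds. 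Until that lemma is supplied, the claimed diameter bound, the sharpness statement, and the $O((pq)^2)$ pivot count are all unsupported.

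A secondary, minor point: the step ``$|T_0\setminus T_1|\le(p-1)(q-1)$ for free'' is true but needs a short case distinction, since the inequality $|T_0\cap T_1|\ge 2(p+q-1)-pq$ is vacuous whenever $pq>2(p+q-1)$; in that regime one instead uses $|T_0\setminus T_1|\le p+q-1\le(p-1)(q-1)$, which holds because $(p-2)(q-2)\ge 2$ there. This is easy to fix but should be said.
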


\subsubsection*{3-way transportation polytopes}

Another seemingly harmless generalization of transportation polytopes comes from considering $3$-way tables $(x_{ijk})_{ijk}\in\R^{p\times q\times r}$
%---whose entries  have three indices $i\in\{1,\dots,p\}$, $j\in\{1,\dots,q\}$ and $k\in\{1,\dots,r\}$---
instead of matrices. In fact, there are two different such generalizations. An \emph{axial $3$-way transportation polytope} consists of all non-negative tables with fixed sums in 2-d slices. A \emph{planar $3$-way transportation polytope} consists of all non-negative tables with fixed sums in 1-d slices. (The names seem wrong, but 
they reflect the fact that an axial transportation polytope is defined by three \emph{vectors} of sizes $p$, $q$ and $r$, and an axial one by three \emph{matrices} of sizes $p\times q$, $p\times r$, and $q\times r$). 

Despite their definition being so close to that of transportation polytopes, 3-way transportation polytopes are \emph{universal} in the following sense:

\begin{theorem}[De Loera and Onn~\cite{DO}]
Let $P$ be a rational convex polytope.
\begin{itemize}
\item
There is a $3$-way planar transportation polytope $Q$ isomorphic to $P$.
\item
There is a $3$-way axial transportation polytope $Q$ which has a face $F$ isomorphic to $P$.
\end{itemize}
In both cases there is a polynomial time algorithm to construct $Q$ (and $F$).
\end{theorem}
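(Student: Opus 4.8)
The plan is to prove both parts by a \emph{normalization} step followed by a \emph{realization} step. Normalization reduces an arbitrary rational polytope $P$ to the solution set of a linear system of a very restricted type; realization then exhibits every such system as the line-sum (resp.\ slice-sum) system of a $3$-way planar (resp.\ axial) transportation polytope, up to passing to a face.

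For normalization, write $P=\{y\ge 0:Ay=b\}$ with $A,b$ integral (inequalities become equalities via slack variables, and denominators are cleared). The goal is to kill the arithmetic in $A$. A term $k\,y_j$ with $|k|$ large is rewritten by adjoining nonnegative variables $y_j=y_j^{(0)},y_j^{(1)},\dots,y_j^{(\lceil\log_2|k|\rceil)}$ linked by equations $y_j^{(t)}=2\,y_j^{(t-1)}$ and replacing $k\,y_j$ by the sum dictated by the binary expansion of $k$; using doubling rather than unary chains keeps the growth polynomial in the bit-size of $P$. A further copying gadget (a variable $x_e$ occurring in several equations is split into copies tied by equations $x_e^{(s)}=x_e^{(s+1)}$) arranges that every variable occurs in at most two equations. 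The outcome is a system $Ex=f$ with $E$ a $0/1$ matrix which is the incidence matrix of a bipartite graph: its equations partition into ``row'' equations and ``column'' equations, each variable appearing in exactly one of each. At every step the new feasibility polytope maps affinely and bijectively onto the previous one, and this bijection identifies their lattice points, so $P$, its face lattice and its lattice points are preserved; everything is produced in polynomial time.

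For realization, place the variables of $Ex=f$ into the layer $k=1$ of a $p\times q\times 3$ array along exactly the bipartite incidence structure of $E$: variable $x_e$ goes to the cell indexed by its unique row equation and its unique column equation, so that the $i$-sums and $j$-sums of layer $1$ are precisely the left-hand sides of $Ex=f$, and one sets the matching planar margins equal to the entries of $f$. The remaining layers $k=2,3$ act as slack: the depth constraints $x_{ij1}+x_{ij2}+x_{ij3}=u_{ij}$ together with sufficiently generous margins let $x_{ij2},x_{ij3}$ absorb the difference between $u_{ij}$ and the active entry in cell $(i,j,1)$, so that nonnegativity of the passive entries imposes nothing new and the feasible set is isomorphic to $P$ — this is the planar statement. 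For the axial statement one builds an axial polytope large enough that forcing an explicit set of entries to vanish cuts out the system just constructed as a face (every face of a transportation polytope is obtained by prescribing which $x_{ijk}$ are $0$), so $P$ appears as the promised face $F$. The step I expect to be the main obstacle is verifying that \emph{two} slack layers — the ``slim'' choice $r=3$ — genuinely suffice to absorb all slack without leaving any hidden equality or inequality on the active variables, while keeping $p$, $q$ and all margins polynomially bounded and polynomial-time computable; this is where the layout must be engineered most carefully and where it has to mesh cleanly with the binary-expansion step of normalization.
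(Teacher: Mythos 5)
The survey itself does not prove this theorem; it only cites De Loera and Onn~\cite{DO}, so there is no in-paper proof for your proposal to be compared against. Judged on its own, your two-stage plan---normalize the defining system, then realize it inside a $3$-way table---is the right shape for the cited construction, and the attention to polynomial bit-size is in the right spirit. The fatal problem is the normalization target you propose.

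You claim the system can be brought, by doubling and copying gadgets, to the form $Ex=f$, $x\ge 0$, with $E$ the $0/1$ vertex--edge incidence matrix of a bipartite graph (each variable in exactly one ``row'' and one ``column'' equation, with coefficient $+1$), via an affine bijection with $P$. But a feasibility set of exactly that form \emph{is} a (possibly sparse) two-way transportation polytope, i.e.\ a bipartite network-flow polytope. Those are defined by totally unimodular systems and, as recorded in Section~\ref{sec:special} of this very survey (Theorem~\ref{thm:network-flow} and the transportation-polytope discussion), have polynomially bounded graph diameter. Since ``isomorphic'' in the theorem means affinely equivalent---hence with the same edge-graph---universality of that class would resolve the polynomial Hirsch conjecture outright, which is open; so a general rational $P$ simply cannot be affinely equivalent to such a system. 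Concretely, the equations $y^{(t)}=2y^{(t-1)}$ and $x^{(s)}=x^{(s+1)}$ that your gadgets introduce carry a $2$ and a $-1$, and no further splitting \emph{within} a purely bipartite $0/1$ framework can eliminate them; nor does your realization step address how to force the ``non-edge'' cells of layer $1$ to vanish. In the actual De Loera--Onn argument the third family of $3$-way constraints is used to carry real arithmetic and to force specified cells to zero---it is not merely a slack absorber---and that is where the entire content of the universality theorem lies, not in a final tidy-up as your sketch frames it.
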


Isomorphic here means affinely (and rationally) equivalent. In particular, that the polytope $Q$ or its face $F$ have the same edge-graph as $P$. Thus, it was interesting to try to apply to the 3-way case the methods that gave polynomial upper bounds for the graphs of transportation polytopes. This was attempted in~\cite{DeLoera:GraphsTP}, where a quadratic upper bound was obtained but only for axial transportation polytopes. A generalization of this result to \emph{faces} of them or to \emph{planar} transportation polytopes would prove the polynomial Hirsch conjecture.

\begin{theorem}[De Loera, Kim, Onn, Santos~\cite{DeLoera:GraphsTP}]
The diameter of every $3$-way axial $p \times q \times r$ transportation polytope is at most $2(p+q+r-3)^2$.
\end{theorem}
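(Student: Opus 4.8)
The plan is to pass to the combinatorial model of the polytope and to show that every vertex can be joined by a short walk to one fixed canonical vertex. Write $P=P(a,b,c)$ for the axial $p\times q\times r$ polytope, with margins $a\in\R^p$, $b\in\R^q$, $c\in\R^r$, $\sum_i a_i=\sum_j b_j=\sum_k c_k$. First I would reduce to the non-degenerate case: perturbing the margins $(a,b,c)$ generically (subject to $\sum_i a_i=\sum_j b_j=\sum_k c_k$) produces a simple axial transportation polytope of the same format whose diameter is no smaller, by the argument of Lemma~\ref{lemma:simple}; so we may assume $P$ itself is simple. Its vertices then correspond bijectively to the spanning trees of the tripartite incidence hypergraph on $[p]\sqcup[q]\sqcup[r]$ whose hyperedges are the triples $(i,j,k)$ — the support of a vertex is such a tree, and it has exactly $p+q+r-2$ hyperedges, this being the rank of the margin system. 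Adjacency in $G(P)$ is the usual circuit exchange: one elementary step makes a non-basic variable $x_{ijk}$ positive (the hyperedge $(i,j,k)$ enters the support) and drives a basic variable to zero (its hyperedge leaves), the new values being obtained by pushing flow around the unique circuit thereby created.

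Fix the greedy (north-west-corner) vertex $v_0$, with its staircase supporting tree $T_0$. The target statement follows if every vertex $u$ reaches $v_0$ in at most $(p+q+r-3)^2$ steps, since then any two vertices are joined through $v_0$ in at most $2(p+q+r-3)^2$ steps. To bound $\operatorname{dist}(u,v_0)$ I would run an induction that processes the hyperedges of $T_0$ in greedy order in phases: the invariant before phase $t$ is that the current vertex already contains, with their $v_0$-values, the first $t-1$ hyperedges of $T_0$, and that what remains to be corrected lives in a residual table which is again an axial transportation polytope on a strictly smaller index set (one row, column or layer having been ``saturated'' and removed). There are $O(p+q+r)$ phases, and I expect each phase to be carried out in $O(p+q+r)$ elementary steps: to install the next staircase hyperedge one selects a circuit through it that avoids the already-installed hyperedges and performs the exchange, but since that circuit can be long and a single flow-push need not land at a vertex, the phase is realized as a sequence of shorter exchanges that progressively shorten it. Matching the resulting linear-times-linear count to the precise constant $2(p+q+r-3)^2$ is then a bookkeeping matter to be folded into the induction.

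An alternative I would keep in reserve uses the fibration $x\mapsto y$, $y_{ij}:=\sum_k x_{ijk}$, of $P$ onto the ordinary $p\times q$ transportation polytope $T_{p,q}(a,b)$, whose fibre over $y$ is the $2$-way transportation polytope $T_{pq,r}(y,c)$ (rows indexed by pairs $(i,j)$ with supplies $y_{ij}$, columns by $k$ with demands $c_k$). One would then lift a geodesic of the base — which has linear diameter by the known bounds for $2$-way polytopes, e.g.\ Hurkens' theorem — edge by edge, after first normalizing inside the current fibre. Note that $P$ is \emph{not} a network flow polytope (each variable occurs in three margin equations, all with coefficient $+1$), so Theorem~\ref{thm:network-flow} cannot simply be invoked; the fibration is the natural substitute.

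The main obstacle, common to both routes, is controlling the cost of installing one further hyperedge (respectively, of lifting one base edge) by only $O(p+q+r)$ steps \emph{while preserving everything already arranged} — never letting an installed hyperedge be the leaving variable, and keeping the support a tree throughout. The three margin families do not decouple (a circuit of the tripartite hypergraph can run through all three index sets at once), so one cannot reduce to running the two-dimensional transportation argument ``one layer at a time''; the induction has to be on the combined parameter $p+q+r$ with an invariant that simultaneously tracks which target hyperedges are frozen, that the residual table stays axial, and that the greedy order on $T_0$ genuinely shrinks the residual problem. Establishing that a circuit avoiding the frozen hyperedges always exists, and that saturation-and-deletion keeps the residual an honest axial transportation polytope, is where the real work sits; the quadratic bound is exactly ``number of hyperedges to install'' times ``steps to install one''.
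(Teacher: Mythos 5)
Your overall strategy---pass to a simple representative, fix the greedy (``northwest-corner'') vertex $v_0$, and argue that any vertex reaches $v_0$ in at most $(p+q+r-3)^2$ pivots by installing the greedy entries one slice at a time---is indeed the structure of the proof in De Loera, Kim, Onn and Santos~\cite{DeLoera:GraphsTP}. The bookkeeping you set up is also right: installing $x_{111}=\min(a_1,b_1,c_1)$ forces the entire saturated slice to zero, the residual problem is an honest axial $(p{-}1)\times q\times r$ (or $p\times(q{-}1)\times r$, etc.) transportation polytope, there are $p+q+r-3$ such reductions, and once a slice is correct the later pivots (performed inside the residual) cannot disturb it because the frozen variables are either non-basic zeros or a basic variable that no residual circuit touches.

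However, the proposal does not contain the one lemma that actually carries the theorem: that from an arbitrary vertex one can reach, in at most $p+q+r-3$ pivots, a vertex in which the first greedy variable takes its full greedy value. You flag this yourself as ``where the real work sits,'' but without it the argument is a restatement of the desired bound arranged into $O(p+q+r)$ phases of $O(p+q+r)$ steps---the quadratic shape is asserted, not derived. Two smaller issues compound this. First, the ``spanning tree of the tripartite incidence hypergraph'' picture is not available here: the constraint matrix of a $3$-way axial transportation polytope is not totally unimodular (already for $2\times2\times2$ there is a $4\times4$ minor of determinant $\pm2$), so vertex supports are matroid bases of a non-graphic matroid and there is no tree/circuit correspondence as in the $2$-way case. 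You only use ``support has size $p+q+r-2$'' and ``adjacency is a pivot,'' which are fine, but the tree language suggests a flow-rerouting picture that simply does not hold and that you implicitly lean on when you say ``pushing flow around the unique circuit.'' Second, a single pivot toward increasing $x_{111}$ need not drive $x_{111}$ to $a_1$, and it is not obvious that a sequence of pivots that does so can be chosen of length bounded by $p+q+r-3$ rather than by the (much larger) ambient dimension; this is precisely what the missing lemma must show, and it is the genuinely hard part of~\cite{DeLoera:GraphsTP}. The fibration-onto-$T_{p,q}(a,b)$ alternative is a correct description of the geometry, but lifting a geodesic of the base while renormalizing in each fiber has the same unaddressed cost-control problem, so it does not close the gap.
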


\subsection{A continuous Hirsch conjecture}\label{sec:continuous}

Here we summarize some recent work of Deza, Terlaky and Zinchenko~\cite{Deza:Central-path,Deza:The-continuous-d-step,Deza:Curvature} in which they propose continuous analogues of the Hirsch and $d$-step conjectures related to the central path method---a variant of interior point methods---of linear programming. For a complete description of the method we refer the reader to~\cite{Boyd:ConvexOptimization, Renegar:A-Mathematical-View}.) The analogy comes from analyzing the total curvature $\lambda_c(P)$ of the central path with respect to a certain cost function $c$ for the polyhedron $P$. 
%See~\cite{Deza:Central-path,Deza:The-continuous-d-step,Deza:Curvature} for details. 
By analogy with $H(n,d)$, let $\Lambda(n,d)$ denote the largest total curvature of the central path over all polytopes $P$ of dimension $d$ defined by $n$ inequalities  and over all linear objective functions $c$.

It had been conjectured that $\lambda_c(P)$ is bounded by a constant for each dimension $d$, and that it grows at most linearly with varying $d$. Deza et al.~have disproved both statements: in \cite{Deza:Central-path}, they construct polytopes for which $\lambda_c(P)$ grows exponentially with $d$. More strongly, in \cite{Deza:Curvature} they construct a family of polytopes that show that $\lambda_c$ cannot be bounded only in terms of $d$:

\begin{theorem}[\cite{Deza:Curvature}]\label{thm:continuous-lower-bound}
For every fixed dimension $d\ge 2$, $\liminf_{n \rightarrow \infty} \frac{\Lambda(n,d)}{n} \geq \pi$.
\end{theorem}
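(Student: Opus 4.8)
The plan is to reduce the assertion to the planar case $d=2$ and then, for polygons, to exhibit a family whose central paths oscillate strongly enough to accumulate close to $\pi$ units of total curvature per facet. For the reduction, let $Q\subset\R^2$ be a polygon with $m$ edges and $c\in\R^2$ an objective whose central path $\gamma_Q$ has total curvature $\Theta$, and consider the prism $P:=Q\times[0,1]^{d-2}\subset\R^d$, a $d$-polytope with $m+2(d-2)$ facets, together with the objective $\tilde c:=(c,0,\dots,0)$. The logarithmic barrier of $P$ is the sum of the barrier of $Q$, in the first two coordinates, and the barrier of the cube, in the remaining $d-2$; since $\tilde c$ is orthogonal to the cube factor, the central-path optimality conditions separate, and the central path of $(P,\tilde c)$ is $\gamma_Q\times\{\bar z\}$, where $\bar z$ is the center of $[0,1]^{d-2}$. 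A product curve one of whose factors is constant has the same total curvature as its non-constant factor, so this path again has total curvature $\Theta$. Hence $\Lambda(m+2(d-2),d)\ge\Lambda(m,2)$, and as $m+2(d-2)\sim m$ it suffices to prove $\liminf_{m\to\infty}\Lambda(m,2)/m\ge\pi$.

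\emph{The planar target.} Along the central path $x(\mu)$ of a planar program one has $\nabla b(x(\mu))=c/\mu$ with $b=\sum_i\log s_i$, hence $\dot x=-\mu^{-2}(\nabla^2 b)^{-1}c$; because $-(\nabla^2 b)^{-1}$ is positive definite, $\dot x$ is never orthogonal to $c$, so the tangent direction of the central path stays within one open half-plane of directions throughout. Accumulating curvature $\approx\pi$ per facet therefore forces the tangent to oscillate within that half-plane, sweeping from nearly-parallel-to-$c$ on one side to nearly-parallel-to-$c$ on the other roughly once per facet, each sweep turning by an angle tending to $\pi$. The goal is thus a family of polygons $Q_k$ with $n=n(k)\to\infty$ facets, and objectives $c_k$, whose central paths execute $\approx n$ such near-$\pi$ sweeps, giving total curvature $\ge(\pi-o(1))\,n$.

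\emph{Towards such a family.} Take the $c$-axis horizontal; convexity forces the top boundary of $Q_k$ to be the graph of a concave piecewise-linear function $g$, the bottom boundary that of a convex one $h$, and the width $g-h$ to be concave in the abscissa. To leading order the ordinate of the central path over abscissa $t$ is the $y$-maximizer of $\sum_i\log(\text{slack}_i)$, which is controlled mainly by the currently active top and bottom facets and so lies near $\tfrac12\bigl(g(t)+h(t)\bigr)$; one therefore wants $\tfrac12(g+h)$ to be a steep zig-zag with $\approx n$ corners at which its slope reverses sign. This can be arranged by interleaving the breakpoints of $g$ and $h$ and letting the successive downward slope jumps of $g$ and upward slope jumps of $h$ leapfrog one another, while shaping $Q_k$ so that its analytic center sits at the end from which the $c_k$-path starts (so the path traverses the whole zig-zag). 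The analysis then estimates $x(\mu)$ and $\dot x(\mu)$ directly from the centering equations: near an active facet $F$ the Hessian $\nabla^2 b$ is dominated by a rank-one term along the normal to $F$, which forces $\dot x$ nearly parallel to $F$, and between two consecutive active facets the tangent rotates by essentially the angle between them, which the leapfrog construction makes close to $\pi$.

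\emph{Main obstacle.} The heart of the matter is producing this oscillation within the rigidity that convexity imposes. Since the cross-section length of $Q_k$ is a concave function of the abscissa (a single hump), there is no thin serpentine channel to route the path through, and one must simultaneously arrange that the central path's ordinate genuinely follows the intended zig-zag---rather than drifting freely inside a slice that convexity may force to be comparatively wide---and that each slope reversal is steep enough to yield a turn of $\pi-o(1)$; balancing the boundary functions, the objective, and the relative scales to achieve both at once is delicate. A further subtlety is that total curvature is not continuous under $C^0$-perturbations of a curve, so it is not enough to show that the path stays close to a zig-zag: the first-order behaviour of the path, extracted from the centering equations, must be controlled well enough to certify the turning at each of the $\approx n$ stages. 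This quantitative central-path analysis is where the real work lies.
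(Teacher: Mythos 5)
The paper does not itself prove Theorem~\ref{thm:continuous-lower-bound}: it is quoted from Deza, Terlaky and Zinchenko~\cite{Deza:Curvature} with no proof here or in the companion, so there is no internal argument to compare against. Judging your attempt on its own merits:

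Your reduction to the planar case is clean and correct. For $P=Q\times[0,1]^{d-2}$ with objective $\tilde c=(c,0,\dots,0)$, the logarithmic barrier splits as a sum, the centering system decouples, the central path is $\gamma_Q\times\{\bar z\}$ with $\bar z$ the analytic center of the cube factor, and a curve with a constant factor has the total curvature of its moving factor. Hence $\Lambda(m,2)\le\Lambda(m+2(d-2),d)$, and since the additive offset $2(d-2)$ is fixed, $\liminf_n\Lambda(n,d)/n\ge\liminf_m\Lambda(m,2)/m$. This step is sound.

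The genuine gap is everything after that. The theorem is only as strong as the planar construction, and you never produce one: you describe the qualitative shape you would like the polygon and central path to have, correctly identify that convexity forbids a serpentine channel and that the mid-curve $\tfrac12(g+h)$ must be made to zig-zag, and then state that balancing the boundary slopes, the objective, and the scales, and certifying a turn of $\pi-o(1)$ at each of $\approx n$ stages, ``is where the real work lies.'' That is exactly the content of the theorem, and it is left undone. Two of the heuristics you lean on are also too weak to carry such an argument: (i) the claim that the central-path ordinate sits near $\tfrac12(g(t)+h(t))$ is only the two-constraint analytic center, while in your intended leapfrog construction many nearly-active facets contribute to the transverse centering condition $\partial_y\!\sum_i\log s_i=0$ and can pull $y^*(t)$ well away from the midpoint; (ii) as you yourself note, total curvature is not controlled by $C^0$-closeness to a polygonal zig-zag, so nothing short of quantitative control of $y^{*}{}'$ and $y^{*}{}''$ from the centering equations would do, and no such estimates are given. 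Without an explicit family $Q_k$ and the accompanying curvature bound, the argument does not establish the stated $\liminf\ge\pi$; as written it is a plausible programme, not a proof.
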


Deza et al.~consider this result a continuous analogue of the existence of Hirsch-sharp polytopes. Motivated by this they pose the following conjecture:

\begin{conjecture}[Continuous Hirsch conjecture]
$\Lambda(n,d) \in O(n)$. That is, there is a constant $K$ such that $\Lambda(n,d)\le Kn$ for all $n$ and $d$.
\end{conjecture}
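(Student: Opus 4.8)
The plan is to mirror, as far as possible, the chain of reductions that underlies the combinatorial Hirsch conjecture, and then to attack the base case with the analytic and algebro-geometric tools available for the central path. First, exactly as in Lemma~\ref{lemma:simple}, one reduces to \emph{generic} linear programs: a small perturbation of the data $(A,b,c)$ makes the analytic center and the optimal vertex nondegenerate and changes $\lambda_c(P)$ only continuously, so $\Lambda(n,d)$ is the supremum of $\lambda_c(P)$ over generic instances. Second, one would try to establish a \emph{continuous $d$-step reduction}---the analogue of Theorem~\ref{thm:dstep-intro}---reducing the desired bound $\Lambda(n,d)\le Kn$ to the single family $\Lambda(2d,d)\le 2Kd$. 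The natural vehicle is again a wedge-type construction: the central path of the wedge of $P$ over a facet should project onto a reparametrization of the central path of $P$ together with a bounded extra component, so that total curvature is essentially preserved.

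For the base case one writes the total curvature integral-geometrically: if $\gamma(\mu)$, $\mu\in(0,\infty)$, is the central path running from the analytic center ($\mu=\infty$) to the optimal vertex ($\mu=0$), then $\lambda_c(P)$ is the spherical length of the tangent indicatrix $\mu\mapsto\dot\gamma(\mu)/\|\dot\gamma(\mu)\|$ in $S^{d-1}$. A first, purely polynomial, bound is already within reach: the central path is one real branch of the \emph{central curve}, the algebraic curve cut out by the optimality equations of the logarithmic barrier, whose degree is polynomial in $n$ for each fixed $d$ (De Loera, Sturmfels and Vinzant). Combined with the fact that the total curvature of a real algebraic curve of degree $\delta$ in $\R^d$ is polynomially bounded in $\delta$ (via the Milnor--F\'ary integral-geometry formula and a B\'ezout bound on the critical points of a generic linear projection), this makes $\Lambda(n,d)$ polynomial in $n$ in fixed dimension. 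The heart of the proposal is to upgrade this to a genuinely \emph{linear} bound with an absolute constant.

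The mechanism I would use is a phase decomposition of the parameter interval $(0,\infty)$. Along the central path the complementarity relations force the scaled slacks $s_i(\mu)/\mu$ to sort themselves, and one partitions $(0,\infty)$ into consecutive intervals across which the ``active-set estimate''---the index set whose scaled slack is currently smallest---does not change. Within one such phase the path should track, up to a bounded transition, the central path of the lower-dimensional face obtained by tightening the dominant constraints, so that each phase contributes only $O(1)$ to the total curvature; this is the step where convexity and the monotone structure of the central path, which a generic algebraic curve lacks, must be used. If, moreover, one can bound the \emph{number} of phases by $O(n)$---rather than by the exponentially many cells of the underlying hyperplane arrangement---then summing gives $\lambda_c(P)=O(n)$, hence the conjecture.

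The main obstacle is exactly this double requirement, and I do not expect it to yield to a routine argument: Theorem~\ref{thm:continuous-lower-bound} shows that the target inequality must already be tight up to the value of $K$, so there is no slack to waste. Bounding the \emph{number} of combinatorial phases is delicate because the active-set estimate can \emph{a priori} oscillate; one needs a monotone, Lyapunov-type potential built from the slacks that changes only $O(n)$ times in total, or an amortized accounting in its place. And the ``$O(1)$ curvature per phase'' claim demands a quantitative guarantee that the central path does not spiral within a single combinatorial cell, which would require pushing the perturbation and contraction estimates behind the convergence theory of path-following methods far enough to control second-order behavior uniformly in $n$ and $d$. Failing these two ingredients, the reductions of the first paragraph are unconditional, but the conjecture itself remains open.
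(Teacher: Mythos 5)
This statement is a \emph{conjecture} (due to Deza, Terlaky and Zinchenko), not a theorem: the paper records it as open, notes only the equivalence with the continuous $d$-step conjecture and the lower bound of Theorem~\ref{thm:continuous-lower-bound}, and states that the best known upper bound on $\Lambda(n,d)$ is $O(n^d)$, coming from Theorem~\ref{thm:arrangement-curvature}. So there is no proof in the paper to compare against, and your text---as you yourself concede in the final sentence---is a research programme rather than a proof. As such it cannot be accepted as a proof of the statement.

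To be concrete about where the gap lies: your opening reductions are not new ground (the reduction to the $d$-step case $\Lambda(2d,d)\in O(d)$ is exactly the cited theorem of Deza, Terlaky and Zinchenko from~\cite{Deza:The-continuous-d-step}, and genericity is harmless), and a bound polynomial in $n$ for fixed $d$ is consistent with what is known; but the entire content of the conjecture is concentrated in the two claims you flag as unproven. First, you need the number of ``phases'' of the central path to be $O(n)$ with an absolute constant; nothing in the complementarity or sorting structure of the slacks is known to prevent the active-set estimate from changing superlinearly often, and the Eisenbrand--H\"ahnle--Razborov--Rothvo\ss\ phenomenon in the combinatorial setting is a warning that purely formal monotonicity arguments can fail to deliver linear counts. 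Second, the ``$O(1)$ curvature per phase'' claim is not a routine perturbation estimate: Theorem~\ref{thm:continuous-lower-bound} shows $\Lambda(n,d)\ge(\pi-o(1))n$ already for fixed $d$, so any per-phase constant must be essentially sharp, and no uniform second-order control of the central path of this strength is available. Without both ingredients the argument bounds nothing beyond what Theorem~\ref{thm:arrangement-curvature} already gives, and the conjecture remains open.
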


Theorem~\ref{thm:continuous-lower-bound} says that if the continuous Hirsch conjecture is true, then it is  tight, modulo a constant factor.
Deza et al.~also conjecture a continuous variant of the $d$-step conjecture, and show it to be equivalent to the continuous Hirsch conjecture, thus providing an analogue of Theorem~\ref{thm:dstep-hirsch}:

\begin{conjecture}[Continuous $d$-step conjecture]
The function $\Lambda(2d,d)$ grows linearly in its input. That is to say, $\Lambda(2d,d)$ is $O(d)$.
\end{conjecture}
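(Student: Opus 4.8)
We outline a strategy; since the statement is a conjecture, what follows is a plan of attack, patterned on how one would try to prove the discrete $d$-step conjecture. Fix a simple $d$-polytope $P=\{x\in\real^d:a_i\cdot x\le b_i,\ i=1,\dots,2d\}$ and a generic linear functional $c$, and let $\mu\mapsto x(\mu)$ be the central path: the unique solution of $c=\mu\sum_i a_i/(b_i-a_i\cdot x)$, with $x(+\infty)$ the analytic center and $x(0^+)$ the $c$-maximal vertex. Since $\Lambda(2d,d)=\sup_{P,c}\lambda_c(P)$ and $\lambda_c(P)$ is the total variation of the unit tangent $T(\mu)=\dot x(\mu)/\|\dot x(\mu)\|$, it is enough to bound this total turning by $O(d)$, uniformly in $P$ and $c$.

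The plan is to cut the path into combinatorial pieces. As $\mu$ decreases, attach to $x(\mu)$ the vertex $v(\mu)$ of $P$ that the path is ``heading towards'', i.e.\ the vertex incident to the $d$ facets whose slacks $b_i-a_i\cdot x(\mu)$ are smallest; this partitions $(0,+\infty)$ into $N$ maximal intervals $I_1,\dots,I_N$ on which $v(\mu)$ is constant. On each $I_k$, in the local coordinates $y_j=b_{i_j}-a_{i_j}\cdot x$ attached to the $d$ facets through $v(\mu)$, the barrier equals $\sum_{j=1}^d\ln y_j$ plus a term that stays smooth and bounded (the remaining $d$ slacks are bounded away from $0$ near $v(\mu)$), so a routine estimate should show that $x(\mu)$ stays $C^1$-close to the fixed ray aimed at $v(\mu)$ while $\mu\in I_k$, whence the tangent turns by only $O(1)$ there. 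At each of the $N-1$ transitions the target vertex changes and the tangent turns by at most $\pi$ (monotonicity of $c$ along the path rules out the curve doubling back). Summing gives $\lambda_c(P)\le C\,N$, so the conjecture reduces to the purely combinatorial assertion $N=O(d)$ when $n=2d$.

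To bound $N$ I would bring in the integral-geometric estimate of Dedieu, Malajovich and Shub: averaged over the bounded cells of a generic arrangement of $2d$ hyperplanes in $\real^d$, the total curvature of the central path --- hence $N$, by the previous paragraph --- is $O(d)$, indeed bounded independently of the number of hyperplanes. The task is then to upgrade this \emph{average} bound to a \emph{worst-case} one, exploiting the rigidity special to the $n=2d$ case (each bounded cell has at most $2^d$ vertices, the functional is monotone along the path, and the $d$-step normalization forces a very constrained facet pattern). As a consistency check, the same segmentation applied to classes whose graphs are already known to have polynomial diameter --- $0$-$1$ polytopes (Theorem~\ref{thm:01hirsch}), network-flow and (dual) transportation polytopes --- immediately yields polynomial, hence finite, bounds on $\lambda_c$ there, so the scheme at least reproduces the expected continuous analogues in the cases where the discrete statement is known.

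The hard part is precisely the passage from the average bound on $N$ to a worst-case bound: $N$ is essentially the length of the monotone edge-walk that the central path shadows, so a worst-case linear bound on $N$ is a monotone Hirsch-type statement, and by the continuous $d$-step theorem (Theorem~\ref{thm:dstep-hirsch}) the conjecture for $n=2d$ is already equivalent to the full continuous Hirsch conjecture. Moreover the curvature can genuinely concentrate --- the lower bound of Theorem~\ref{thm:continuous-lower-bound} forces $\Lambda(n,d)\ge\pi n$ for each fixed $d$ --- so no estimate that sees only the combinatorics of $P$ can suffice. I therefore expect the true obstacle to be supplying a new worst-case bound on how long the central path lingers near the boundary, which is the Hirsch phenomenon itself; absent that, this plan, like every known approach, only transfers the difficulty rather than removing it.
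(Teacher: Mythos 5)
This statement is an open conjecture of Deza, Terlaky and Zinchenko; the paper offers no proof of it, and your text, as you yourself say, is a plan of attack rather than a proof. So the gap here is total, but it is worth naming exactly where your plan breaks down. First, the ``routine estimate'' that the unit tangent turns only $O(1)$ while the path is aimed at a fixed target vertex is unsubstantiated, and it is precisely where the known difficulty lives: the redundant Klee--Minty constructions of Deza et al.~\cite{Deza:Central-path} show that the central path can bend sharply and repeatedly near the vertices it shadows, so the per-segment turning cannot be dispatched by a generic barrier estimate. Second, even granting that step, your reduction ends at the claim $N=O(d)$, where $N$ counts target-vertex switches; since $N$ is essentially the length of the monotone vertex walk shadowed by the path, this is itself a monotone $d$-step-type assertion, i.e.\ another open Hirsch-flavored statement, not something one can extract from the $n=2d$ normalization (a simple $d$-polytope with $2d$ facets can still have exponentially many vertices). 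Third, the ``upgrade from average to worst case'' of the Dedieu--Malajovich--Shub bound (Theorem~\ref{thm:arrangement-curvature}) is exactly the open problem: as the paper notes, the only worst-case bound currently extractable from it is $\Lambda(n,d)=O(n^d)$, obtained by multiplying the average by the number of bounded cells.

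Two smaller corrections. The equivalence of the continuous $d$-step and continuous Hirsch conjectures is the unnumbered theorem of~\cite{Deza:The-continuous-d-step} stated just below the conjecture, not Theorem~\ref{thm:dstep-hirsch}, which is the discrete Klee--Walkup reduction. And your reading of Theorem~\ref{thm:continuous-lower-bound} as forcing $\Lambda(n,d)\ge\pi n$ is a statement about $n\to\infty$ with $d$ fixed; it does not obstruct $\Lambda(2d,d)=O(d)$, since on the diagonal $n=2d$ it is consistent with a linear bound --- indeed that consistency is why the conjecture is plausible at all.
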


\begin{theorem}[\cite{Deza:The-continuous-d-step}]
The continuous Hirsch conjecture is equivalent to the continuous $d$-step conjecture. That is, if $\Lambda(2d,d) \in O(d)$ for all $d$, then $\Lambda(n,d)\in O(n)$ for all $d$ and $n$.
\end{theorem}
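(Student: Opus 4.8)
The plan is to transport the proof of the discrete $d$-step theorem (Theorem~\ref{thm:dstep-intro}) to the continuous setting. That proof rests on the \emph{wedge} operation of Section~\ref{sec:wedging}, which raises both the dimension and the number of facets by one without decreasing the diameter; the continuous analogue one needs is a \emph{continuous wedge} on linear programs having a parallel effect on the total curvature of the central path. Granting two estimates --- a \emph{wedging inequality} $\Lambda(n,d)\le\Lambda(n+1,d+1)$ and an \emph{un-wedging inequality} $\Lambda(n,d)\le\Lambda(n-1,d-1)+O(1)$ valid when $n<2d$ --- the theorem is pure bookkeeping. If $n\ge 2d$, apply wedging $n-2d$ times to pass from $(n,d)$ to the balanced pair $\bigl(2(n-d),\,n-d\bigr)$, getting $\Lambda(n,d)\le\Lambda\bigl(2(n-d),\,n-d\bigr)$. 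If $n<2d$, apply un-wedging $2d-n$ times to reach the very same pair, getting $\Lambda(n,d)\le\Lambda\bigl(2(n-d),\,n-d\bigr)+O(2d-n)$. Since $n-d$ and $2d-n$ are each $O(n)$, the hypothesis $\Lambda(2m,m)\in O(m)$ now gives $\Lambda(n,d)\in O(n)$ in both cases.

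It remains to build the continuous wedge and prove the two estimates. Given an instance $\max\{c^{\top}x : Ax\le b\}$ with feasibility polytope $P\subset\R^d$ and a chosen facet $F=\{x\in P : a_i^{\top}x=b_i\}$, form the wedge $\wed_F(P)\subset\R^{d+1}$ of Section~\ref{sec:wedging}: it is bounded by the $n-1$ prisms over the facets of $P$ other than $F$ together with two ``roof'' facets meeting in a copy of $F$, so it has $n+1$ facets in dimension $d+1$. As objective take $c$ composed with the projection $\pi\colon\wed_F(P)\to P$ that forgets the new coordinate (call it $c'$). Along the central path of $\wed_F(P)$ the new coordinate is a smooth function of the current $\pi$-image, determined by the balance of the two roof log-barriers, so $\pi$ carries the central path of $\wed_F(P)$ monotonically onto a curve in $P$ which is the central path of the program obtained from the original by a mild perturbation of its logarithmic barrier. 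The \textbf{main technical lemma} is then the curvature comparison $\lambda_c(P)\le\lambda_{c'}\bigl(\wed_F(P)\bigr)$, which gives $\Lambda(n,d)\le\Lambda(n+1,d+1)$. The un-wedging estimate runs this backwards: when $n<2d$, a counting argument produces --- after a small perturbation costing $O(1)$ in curvature --- two facets of a near-extremal instance acting as parallel roofs over a common $(d-1)$-face, and the hyperplane section through that face is a $(d-1)$-dimensional instance with $n-1$ facets that retains all but an $O(1)$ amount of the central curvature.

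The main obstacle is precisely this curvature comparison. For diameters the wedge is transparent, because the graph of $\wed_F(P)$ is, up to subdivision, the graph of $P$, so distances cannot drop; but total curvature is the total variation of the unit tangent of the central curve, and --- unlike combinatorial distance --- it can genuinely \emph{increase} under a projection (a helix projects onto a circle, with larger total curvature). So no abstract argument is available: one must exploit the specific geometry of the wedge, namely that the new coordinate is a smooth function of the old ones, that the perturbation of the barrier by the two roof terms is smooth and uniformly controlled, and that the central path enters and leaves the ``roof region'' only boundedly often, so the roof facets inject only a bounded amount of extra turning. A secondary difficulty is the range $n<2d$, which wedging alone cannot connect to the diagonal, forcing the un-wedging/slicing direction and the non-obvious fact that a well-chosen planar section preserves the curvature one wants to keep --- the precise continuous counterpart of the fact that, in Theorem~\ref{thm:dstep-intro}, the equality $H(d+k,d)=H(2k,k)$ is asserted only for $k<d$. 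With the curvature comparison and its reverse in place, the iteration above is formal and completes the proof.
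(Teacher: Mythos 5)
This survey does not include its own proof of this theorem --- it is stated as a cited result of Deza, Terlaky, and Zinchenko --- so there is no in-paper argument to compare against; I assess your proposal on its own terms. The bookkeeping in your opening paragraph is sound, but it hangs on two lemmas that you assert rather than prove, and the second of them is both unlikely to go through as stated and unnecessary. For the wedging inequality $\Lambda(n,d)\le\Lambda(n+1,d+1)$ you correctly identify the obstacle yourself: total curvature, unlike graph distance, is not monotone under projections (your helix/circle example is exactly the point), so the discrete argument does not transfer. Your sketch --- that $\pi$ carries the central path of $\wed_F(P)$ to the central path of a smoothly perturbed barrier on $P$, with the perturbation ``uniformly controlled'' --- is a research direction, not a proof; smoothness does not by itself yield the one-sided comparison $\lambda_c(P)\le\lambda_{c'}(\wed_F(P))$, and the claim that the central path enters the roof region only boundedly often is itself a nontrivial assertion in need of an argument.

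The un-wedging step for $n<2d$ is the more serious gap. In the discrete proof, $H(n,d)\le H(n-1,d-1)$ for $n<2d$ uses that two vertices of a simple $d$-polytope with fewer than $2d$ facets lie in a common facet $F$, and that every path inside $F$ is also a path in $P$. The central path of a log-barrier program, by contrast, lives strictly in the interior of $P$ and never touches a facet, so there is no ``restrict to a face'' move available; a hyperplane section through a $(d-1)$-face produces a new polytope whose central path (for a new barrier) has no a priori relation to the original, and your additive $O(1)$ error per un-wedging is unsubstantiated. The route is also unnecessary: the range $d<n<2d$ is handled much more cheaply by monotonicity of $\Lambda$ in the number of facets, $\Lambda(n,d)\le\Lambda(n+1,d)$, which follows because adding a redundant constraint very far from $P$ contributes a nearly constant term to the logarithmic barrier and hence perturbs the central path and its total curvature by arbitrarily little. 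That gives $\Lambda(n,d)\le\Lambda(2d,d)\le Kd<Kn$ directly, with no un-wedging and no error term. Replace the second half of your argument with this monotonicity observation, and concentrate your effort where it is actually needed: a real proof of the wedging inequality.
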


The best upper bound known for $\Lambda(n,d)$ is $O(n^d)$, derived from Theorem~\ref{thm:arrangement-curvature} below. This theorem refers to the central path curvature for hyperplane arrangements, as studied by Dedieu, Malajovich and Shub~\cite{Dedieu}. 

 An arrangement $\mathcal{A}$ of $n$ hyperplanes in dimension $d$ is called \emph{simple} if every $n$ hyperplanes intersect at a unique point. It is easy to show that any simple arrangement of $n$ hyperplanes in $\real^d$ has exactly $s=\binom{n-1}{d}$ bounded full-dimensional cells.
For a simple arrangement $\mathcal{A}$ with bounded cells $P_1,\dots,P_s$
and a given objective function $c$, Dedieu et al. consider the quantity:
$\lambda_c(\mathcal{A})= \frac{1}{s} \sum_{i=1}^{s} \lambda_c(P_i)$.
That is, the average total curvature of central paths of all bounded cells in the arrangement. They prove:
\begin{theorem}[\cite{Dedieu}]
\label{thm:arrangement-curvature}
$\lambda_c(\mathcal{A}) \leq 2\pi d$, for every simple arrangement.
\end{theorem}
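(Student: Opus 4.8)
The plan is to convert each total curvature into an integral-geometric count of ``turning points'' and then exploit the fact that all the central paths of the arrangement lie on a single algebraic curve. Recall that for a $C^2$ arc $\gamma$ parametrized by arclength the total curvature $\int\|\ddot\gamma\|$ equals the length of the tangent indicatrix $s\mapsto\dot\gamma(s)\in S^{d-1}$; so $\lambda_c(P_i)$ is the length of the spherical curve swept out by the unit tangent to the central path of the cell $P_i$. First I would invoke the Cauchy--Crofton formula on $S^{d-1}$: the length of a rectifiable curve $\Gamma\subset S^{d-1}$ equals $\kappa_d\int_{u\in S^{d-1}}\#\bigl(\Gamma\cap u^{\perp}\bigr)\,du$, where $u^{\perp}=\{v\in S^{d-1}:\langle u,v\rangle=0\}$ is the corresponding great subsphere and the dimensional constant $\kappa_d=\pi/\mathrm{vol}(S^{d-1})$ is pinned down by testing on a great circle. (Here one needs the tangent indicatrix to be rectifiable, which holds since the central path is a compact real-analytic arc.)

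The point of this is that the unit tangent at a point of the central path lies on $u^{\perp}$ precisely when that point is a critical point of the linear functional $x\mapsto\langle u,x\rangle$ along the path. Writing $N_i(u)$ for the number of such critical points on the central path of $P_i$, summing over the $s$ bounded cells and dividing by $s$ gives
\[
\lambda_c(\mathcal A)=\frac{1}{s}\sum_{i=1}^{s}\lambda_c(P_i)=\frac{\pi}{s\,\mathrm{vol}(S^{d-1})}\int_{S^{d-1}}\Bigl(\sum_{i=1}^{s}N_i(u)\Bigr)\,du,
\]
so it suffices to prove that $\sum_{i=1}^{s}N_i(u)\le 2ds$ for almost every direction $u$; then the integral is at most $2ds\,\mathrm{vol}(S^{d-1})$ and we obtain $\lambda_c(\mathcal A)\le 2\pi d$.

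To bound $\sum_i N_i(u)$ I would use that the central paths of \emph{all} cells of $\mathcal A$ are exactly the real branches, lying in those cells, of one complex algebraic curve $\mathcal C$ --- the central curve --- cut out by the stationarity equations $c+\mu\,A^{\mathsf T}\mathrm{diag}(b-Ax)^{-1}\mathbf 1=0$ after clearing denominators; these equations do not depend on the cell, only the sign pattern of $b-Ax$ does. Hence for generic $u$ the union of the $s$ central-path arcs is a disjoint union of smooth arcs on $\mathcal C$, and every point counted by some $N_i(u)$ is a point of $\mathcal C$ at which $\langle u,\cdot\rangle$ is critical, so $\sum_i N_i(u)$ is at most the total number of such critical points on $\mathcal C$. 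The crux is to bound \emph{that} number by $2ds$, i.e.\ by $2d$ times the number of bounded cells of $\mathcal A$. One route is via the degree of $\mathcal C$ (which is of the same order as $s=\binom{n-1}{d}$) together with a B\'ezout / polar-degree estimate for the critical points of a generic linear form on a curve sitting in $\R^d$; an equivalent hands-on route is to note that the tangent direction of the central path at the point with parameter $\mu$ is proportional to $(A^{\mathsf T}\mathrm{diag}(b-Ax)^{-2}A)^{-1}c$, so $N_i(u)$ counts the zeros along the $i$-th branch of the scalar function $u^{\mathsf T}(A^{\mathsf T}\mathrm{diag}(b-Ax)^{-2}A)^{-1}c$, which after clearing denominators is the restriction to $\mathcal C$ of a polynomial of controlled degree, and then one accounts globally.

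The hard part is precisely this last estimate: the Crofton reduction and the observation that all central paths lie on one algebraic curve are essentially formal, but squeezing the sharp constant $2d$ out of the turning-point count requires genuine information about the central curve --- enough control of its degree and of its singular/ramification behaviour that the number of hyperplane tangencies along it does not exceed $2d\cdot(\text{number of bounded cells})$. A secondary point to handle with care is the behaviour at vertices of the arrangement, where central-path arcs of adjacent cells abut and $\mathcal C$ may be singular; one must check that for generic $u$ these finitely many points are not critical for $\langle u,\cdot\rangle$ and are not double-counted. Finally I would remark that the same computation gives $\sum_i\lambda_c(P_i)\le 2\pi d\binom{n-1}{d}$, hence $\max_i\lambda_c(P_i)=O(n^d)$, which is the bound on $\Lambda(n,d)$ quoted just before the statement.
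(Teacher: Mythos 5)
The paper does not contain a proof of this theorem; it is simply quoted from Dedieu, Malajovich and Shub \cite{Dedieu}, so there is no in-paper argument for you to be compared against. Evaluated on its own terms, your outline identifies the correct machinery --- and it is, in fact, the strategy of \cite{Dedieu} --- but it stops short of proving anything. The Cauchy--Crofton reduction (total curvature as the length of the tangent indicatrix, converted to an integral over directions of the number $N_i(u)$ of turning points) is carried out correctly, the normalizing constant $\kappa_d=\pi/\mathrm{vol}(S^{d-1})$ is right, and you correctly isolate the target inequality $\sum_{i=1}^{s} N_i(u)\le 2ds$ as the crux. You also correctly note that all cell central paths lie on a single algebraic curve $\mathcal{C}$ defined by the stationarity conditions.

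But the step you flag as ``the hard part'' is not a loose end to be tidied up later; it is the entire content of the theorem, and the routes you gesture at would not close it as stated. A raw B\'ezout or polar-degree estimate for the tangencies of a generic hyperplane direction to a space curve of degree $D$ grows quadratically (on the order of $D(D-1)$), and since $D$ is itself on the order of $s=\binom{n-1}{d}$, this is off by a full factor of $s$ --- you would get $\sum_i N_i(u)=O(s^2)$ rather than the needed $O(ds)$. To get the linear-in-$s$ bound one must not count tangencies to $\mathcal{C}$ in the ambient space; one has to exploit the specific structure of the central curve (its presentation as a fibered family over the $\mu$-line, the explicit form $\dot x(\mu)\propto(A^{\mathsf T}\mathrm{diag}(b-Ax)^{-2}A)^{-1}c$ you write down, and the cancellation between cells sharing a hyperplane) so that the relevant critical-point count is governed by the degree of a projection of $\mathcal{C}$, not the degree of its tangent variety. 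As written, your proposal does not supply an argument that the turning-point count is linear rather than quadratic in $s$, nor one that produces the factor $2d$, so the proof is genuinely incomplete at its central step. The secondary issues you raise (rectifiability of the indicatrix, generic $u$ avoiding vertices of the arrangement) are fine and handled correctly.
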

Put differently, even if individual cells can give total curvature linear in $n$  by Theorem~\ref{thm:continuous-lower-bound}, the average over all cells of a given arrangement is bounded by a function of $d$ alone.

Turning the analogy back to polytope graphs,  Deza et al.~\cite{Deza:Curvature} consider the average diameter of the graphs of all bounded cells in a simple arrangement $\mathcal{A}$. Denote it $\diam(\mathcal{A})$ and let $\mathcal{H}(n,d)$ be the maximum of $\diam(\mathcal{A})$ over all simple arrangements defined by $n$ hyperplanes in dimension $d$. They relate $\mathcal{H}(n,d)$ to the Hirsch conjecture, as follows:
%
%\begin{conjecture}[Hyperplane diameter conjecture~\cite{Deza:Curvature}]
%$H_\mathcal{A}(n,d) \leq d$.
%\end{conjecture}
%
%They prove that this inequality, modulo a linear factor, would follow from the Hirsch conjecture:
\begin{proposition}[\cite{Deza:Curvature}]
The Hirsch conjecture implies $\mathcal{H}(n,d) \leq d + \frac{2d}{n-1}$.
\end{proposition}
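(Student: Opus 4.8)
The plan is to apply the Hirsch bound to each bounded cell of the arrangement separately and then average, the only substantive work being a bound on the total number of facets of all bounded cells.

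First I would set up notation. Fix a simple arrangement $\mathcal A$ of $n$ hyperplanes in $\real^d$, and let $P_1,\dots,P_s$ be its bounded full-dimensional cells, so $s=\binom{n-1}{d}$. Each $P_i$ is a bounded (simple) $d$-polytope; let $n_i$ be its number of facets, noting $n_i\ge d+1>d$. Assuming the Hirsch conjecture, $\diam(G(P_i))\le n_i-d$ for every $i$, and therefore
\[
\diam(\mathcal A)=\frac1s\sum_{i=1}^s\diam(G(P_i))\ \le\ \frac1s\sum_{i=1}^s (n_i-d)\ =\ \frac1s\sum_{i=1}^s n_i\ -\ d .
\]
So it suffices to prove $\sum_i n_i\le 2n\binom{n-2}{d-1}$.

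Next I would count the pairs $(P_i,F)$ with $F$ a facet of $P_i$, grouping them by the hyperplane containing $F$. Since $\mathcal A$ is simple, each facet of each $P_i$ lies in a unique hyperplane $h\in\mathcal A$, and the trace $\mathcal A\cap h$ is a simple arrangement of $n-1$ hyperplanes in $h\cong\real^{d-1}$: any $d-1$ of the traces meet at the unique common point of $h$ and the corresponding $d-1$ original hyperplanes, and no $d$ traces are concurrent. A facet $F$ of a bounded $P_i$ lying in $h$ is then a bounded full-dimensional cell of $\mathcal A\cap h$ (its relative interior avoids every hyperplane of $\mathcal A$, and it is bounded since $P_i$ is); and a fixed bounded cell $F'$ of $\mathcal A\cap h$ is a facet of at most two cells of $\mathcal A$, namely the two cells locally bordering $h$ along $\operatorname{relint}(F')$, hence of at most two of the $P_i$. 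As $\mathcal A\cap h$ has exactly $\binom{(n-1)-1}{d-1}=\binom{n-2}{d-1}$ bounded cells, each $h$ contributes at most $2\binom{n-2}{d-1}$ to $\sum_i n_i$, and summing over the $n$ hyperplanes gives $\sum_i n_i\le 2n\binom{n-2}{d-1}$. Finally, using the identity $\binom{n-2}{d-1}\big/\binom{n-1}{d}=d/(n-1)$,
\[
\diam(\mathcal A)\ \le\ \frac{2n\binom{n-2}{d-1}}{\binom{n-1}{d}}-d\ =\ \frac{2nd}{n-1}-d\ =\ d+\frac{2d}{n-1},
\]
and taking the maximum over all simple arrangements yields $\mathcal H(n,d)\le d+\tfrac{2d}{n-1}$.

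The hard part will be the double-counting step: making rigorous that the facets on a fixed hyperplane $h$ of the bounded cells of $\mathcal A$ are in at-most-two-to-one correspondence with the bounded cells of $\mathcal A\cap h$. One must verify that such a facet is genuinely $(d-1)$-dimensional and bounded in $h$, that $F=P_i\cap h$ determines $P_i$ up to the side of $h$ (so distinct facets yield distinct cells of the trace), and that simplicity of $\mathcal A$ forbids a bounded cell of $\mathcal A\cap h$ from bordering three or more cells of $\mathcal A$. The remaining ingredients — the count $\binom{m-1}{k}$ of bounded cells of a simple arrangement (quoted above) and the binomial identity — are routine.
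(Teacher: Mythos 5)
Your proof is correct. The paper itself states this proposition without proof, citing the Deza--Terlaky--Zinchenko paper, so there is no in-paper argument to compare against; your double-counting of facets over the hyperplanes of the arrangement (each hyperplane contributing at most $2\binom{n-2}{d-1}$ bounded facets, since its trace is a simple arrangement of $n-1$ hyperplanes in dimension $d-1$) combined with Hirsch per cell and the identity $\binom{n-2}{d-1}/\binom{n-1}{d}=d/(n-1)$ is the natural route and yields exactly the stated bound.
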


\section{Constructions}
\label{sec:constructions}
We now move to interesting constructions of polytopes motivated or related to the Hirsch conjecture. All the proofs that are not included in this section, plus additional comments, can be found in~\cite{Kim-Santos-companion}.

\subsection{The wedge operation}\label{sec:wedging}
Wedging is a very basic, yet extremely fruitful, operation that one can 
do to a polytope. Its simplicial 
counter-part is the \emph{one-point suspension}, see~\cite{Kim-Santos-companion}.

Roughly speaking, the wedge of $P$ at a facet $F$ of it is the polytope, of one dimension more, obtained gluing two copies of $P$ along $F$. See Figure~\ref{fig:wedge} for an example. More formally,  let $f(x)\le b$ be an inequality defining the facet $F$. The wedge of $P$ over $F$ is the polytope 
\[
\wed_F(P) := P \times [0,\infty) \cap \{ (x,t)\in \R^d\times \R : f(x) + t \le b \}.
\]
Put differently, $\wed_F(P)$ is formed by intersecting the half-cylinder $C:=P \times [0,\infty)$ with a closed half-space $J$ in $\R^{d+1}$ such that:
\begin{itemize}
\item the intersection $J \cap C$ is bounded and has nonempty interior, and
\item $\partial J \cap C = F$.
\end{itemize}

\begin{lemma}
\label{lemma:wedge}
Let $P$ be a $d$-polytope with $n$ facets. Let  $\wed_F(P)$ be its wedge on a certain facet $F$. Then, $\wed_F(P)$ has dimension $d+1$, $n+1$ facets, and
\[
\diam(\wed_F(P)) \ge \diam(P).
\]
\end{lemma}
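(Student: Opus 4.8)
The plan is to handle the three assertions in turn --- dimension, facet count, and the diameter bound --- the first two being bookkeeping and the third carrying the real content. Throughout I work with the description $\wed_F(P)=\{(x,t): x\in P,\ 0\le t\le b-f(x)\}$, equivalently $C\cap J$ with $C=P\times[0,\infty)$ a half-cylinder and $J$ a halfspace satisfying $\partial J\cap C=F$.

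For the dimension: since $F$ is $(d-1)$-dimensional it does not contain $C$, so $J\cap C$ is full-dimensional in $\R^{d+1}$; it is bounded (because $0\le t\le b-f(x)$ and $P$ is bounded) and has nonempty interior, hence it is a $(d+1)$-polytope. For the facets: combining the $n$ facet inequalities of $P$ with $t\ge 0$ describes $C$ with $n+1$ inequalities, and adding $f(x)+t\le b$ gives $n+2$; but the inequality $f(x)\le b$ defining $F$ is now redundant (it follows from $t\ge 0$ and $f(x)+t\le b$), leaving $n+1$. I then check each is facet-defining: $t=0$ cuts out $P\times\{0\}\cong P$; $f(x)+t=b$ cuts out $\{(x,b-f(x)):x\in P\}$, which projects isomorphically onto $P$; and for a facet $F_i\neq F$ of $P$, $g_i(x)=b_i$ cuts out $\{(x,t):x\in F_i,\ 0\le t\le b-f(x)\}$, of dimension $d$ because $F_i\not\subseteq F$ (two distinct facets cannot be nested). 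So $\wed_F(P)$ has exactly $n+1$ facets, two of which, $P_0$ (the ``bottom'', $t=0$) and $P_1$ (the ``slanted lid'', $f(x)+t=b$), are combinatorially copies of $P$, with $\pi\colon(x,t)\mapsto x$ restricting to an isomorphism $P_0\to P$ and $P_1\to P$.

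For the diameter, I use the coordinate projection $\pi\colon\wed_F(P)\to P$, which is onto and has vertical-segment fibers. The key claim is that $\pi$ does not increase graph distance: if $u',v'$ are vertices of $\wed_F(P)$ then $d_{G(P)}(\pi(u'),\pi(v'))\le d_{G(\wed_F(P))}(u',v')$. It suffices to show $\pi$ sends every edge of $\wed_F(P)$ to an edge or a vertex of $P$; then a shortest path in $G(\wed_F(P))$ maps to a walk of no greater length in $G(P)$ after deleting repeated vertices. Granting this, choose $u,v\in V(P)$ with $d_{G(P)}(u,v)=\diam(P)$; then $(u,0),(v,0)$ are vertices of the facet $P_0\cong P$, hence of $\wed_F(P)$, they project to $u,v$, and so $\diam(\wed_F(P))\ge d_{G(\wed_F(P))}((u,0),(v,0))\ge d_{G(P)}(u,v)=\diam(P)$.

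The one genuine step --- and the main (if modest) obstacle --- is the edge claim, which I prove by cases on which facets of $\wed_F(P)$ contain a given edge $e$. If $e\subseteq P_0$ or $e\subseteq P_1$, then $e$ is an edge of that facet, and since $\pi$ restricts there to an isomorphism onto $P$, $\pi(e)$ is an edge of $P$. Otherwise $e$ lies only in facets of the third type, so $e=\{(x,t)\in\wed_F(P): g_i(x)=b_i,\ i\in S\}$ for the set $S$ of indices of such facets containing it; then $\pi(e)=G:=P\cap\bigcap_{i\in S}F_i$ is a face of $P$ and $e=\{(x,t):x\in G,\ 0\le t\le b-f(x)\}$. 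If $G\subseteq F$ then $e\subseteq P_0$, a case already covered; otherwise $f<b$ somewhere in the relative interior of $G$, and one checks $\dim e=\dim G+1$. Since $\dim e=1$ this forces $\dim G=0$, so $G$ is a vertex and $\pi(e)$ is that vertex. This exhausts the cases and proves the claim, completing the proof. (As a byproduct, every vertex of $\wed_F(P)$ projects to a vertex of $P$, those on $F$ having a single preimage and those off $F$ having two --- reflecting that $\wed_F(P)$ is two copies of $P$ glued along $F$.)
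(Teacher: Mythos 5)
Your proposal is correct and takes the same route as the paper's own (much terser) proof: the paper simply asserts that the wedge has a vertical facet for each facet of $P$ other than $F$ plus the two facets cutting the cylinder, and that every edge of $\wed_F(P)$ projects to an edge or a vertex of $P$, whence the diameter inequality. You supply the details the paper omits (redundancy of the $f(x)\le b$ inequality, facet-definingness of the remaining $n+1$ inequalities, and the case analysis proving the edge-projection claim), but there is no difference in method.
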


\begin{proof}
The wedge  increases both the dimension and the number of facets by one. Indeed, $\wed_F(P) $ has a vertical facet projecting to each facet of $P$ other than $F$, plus the two facets that cut the cylinder $P\times \R$, and whose intersection projects to $F$. 
%See Figure~\ref{fig:wedge} for an example.
%
\begin{figure}[hbt]
\begin{center}
\includegraphics[scale=0.60]{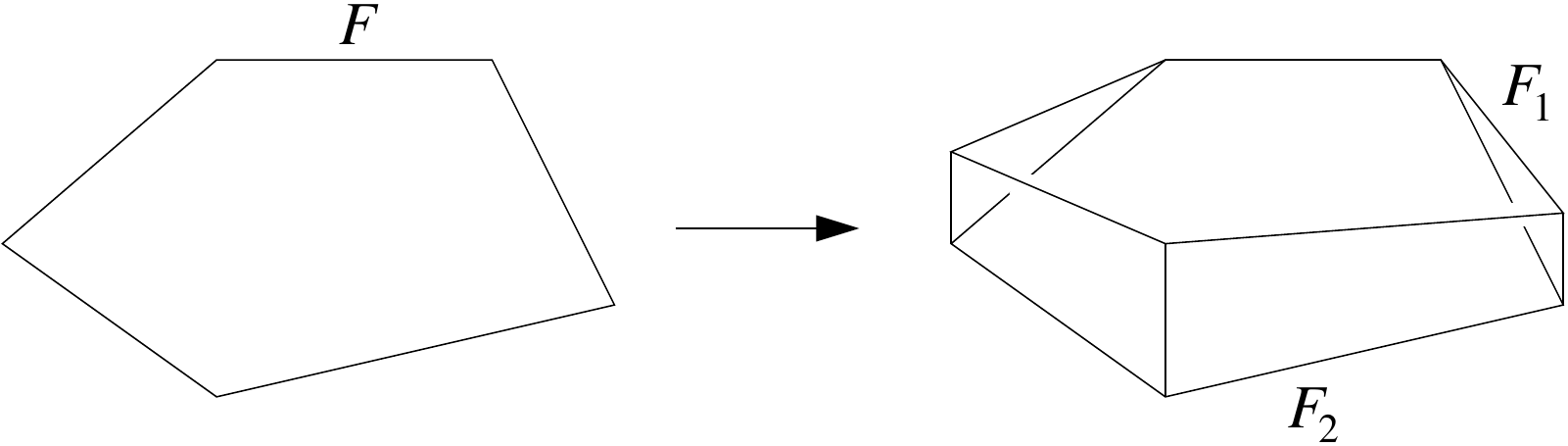}
\caption{A $5$-gon and a wedge on its top facet}\label{fig:wedge}
\end{center}
\end{figure}
The diameter of $\wed_F(P)$ is at least that of $P$, since every edge of $\wed_F(P)$ projects either to an edge of $P$ or to a vertex of $P$.
\end{proof}

In particular, if $P$ is Hirsch-sharp then $\wed_F(P)$ is either Hirsch-sharp or a counterexample to the Hirsch conjecture. The properties that $P$ would need for the latter to be the case will be made explicit in Remark~\ref{rem:non-hirsch-wedge}.

As a corollary of Lemma~\ref{lemma:wedge} we get that in order to prove (or disprove) the Hirsch conjecture it is sufficient to restrict attention to the case when the number of facets equals twice the dimension:

\begin{theorem}[Klee-Walkup~\cite{Klee:d-step}]
\label{thm:dstep-hirsch}
$H(k+d,d) \le H(2k,k)$, with equality if (but not necessarily only if) $k<d$.
%
%for every $d$-polytope with $d+k$ facets there is a $k$-polytope with $2k$ facets and the same or bigger diameter. In %particular,
%\begin{equation*}
%\max_{d \geq 2} H(k+d,d) = H(2k,k).
%\end{equation*}
%In particular, the Hirsch Conjecture~\ref{conj:hirsch} holds if and only if the $d$-step Conjecture~\ref{conj:d-step} holds.
\end{theorem}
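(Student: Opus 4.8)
The plan is to establish the inequality $H(k+d,d) \le H(2k,k)$ by iterated wedging, and then to address the equality statement for $k<d$ via the reverse inequality. For the inequality, start with an arbitrary $d$-polytope $P$ with $n = k+d$ facets. If $n < 2d$, i.e. $k < d$, we want to produce a polytope of larger dimension but with the same "gap" between facets and dimension. Each application of the wedge operation $\wed_F(P)$, by Lemma~\ref{lemma:wedge}, increases both $d$ and $n$ by one and does not decrease the diameter. Hence wedging $d-k$ times (choosing facets appropriately at each stage) yields a polytope $P'$ of dimension $d' = d + (d-k) = 2d-k$ with $n' = n + (d-k) = (k+d)+(d-k) = 2d-k$ facets — wait, that gives $n' = d'$, which is degenerate; instead I should track that the number of facets \emph{minus} dimension is invariant under wedging, so I wedge until the dimension reaches $k$... no: the gap $n-d = k$ is preserved, and I want to reach the situation $n = 2(\text{dimension})$, i.e. $n - d = d$, i.e. the gap equals the dimension. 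Since wedging raises $d$ by one while keeping the gap $k$ fixed, I apply it until the dimension equals $k$ — but that would \emph{lower} $d$, which wedging cannot do. The correct direction: I want $\dim = k$, and starting from $\dim = d \ge k$ I cannot wedge down. So instead one wedges to \emph{increase} things until $n = 2\dim$. Starting from gap $k$ and dimension $d$, after $t$ wedges dimension is $d+t$ and facets are $d+k+t$; we reach $n = 2\dim$ when $d+k+t = 2(d+t)$, i.e. $t = k-d \le 0$. So that's impossible too when $k<d$. The resolution is that the relevant reduction uses wedging in the \emph{other} role: Theorem~\ref{thm:dstep-intro} as stated in the introduction reads $H(d+k,d) \le H(2k,k)$, and the point is that a polytope of dimension $k$ with $2k$ facets, after $d-k$ wedges, has dimension $d$ and $2k + (d-k) = d+k$ facets; thus every diameter achievable in dimension $d$ with $d+k$ facets is \emph{at most} the max over the $(2k,k)$ case only if one can run wedging backwards, which one cannot directly — so the inequality $H(d+k,d)\le H(2k,k)$ is actually the nontrivial direction and requires more than Lemma~\ref{lemma:wedge} alone; it needs the non-revisiting path property, i.e. that a Hirsch-sharp or long-diameter example in dimension $d$ with few facets can be "unwedged" or that its long path survives restriction to a suitable face.

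So here is the honest plan. First I would prove the easy direction $H(2k,k) \le H(d+k,d)$ for $k \le d$: take any $k$-polytope $Q$ with $2k$ facets realizing a long diameter, and wedge it $d-k$ times; by Lemma~\ref{lemma:wedge} the result is a $d$-polytope with $2k+(d-k)=d+k$ facets and diameter at least $\diam(Q)$, so $H(d+k,d) \ge H(2k,k)$. This already gives the "equality if $k<d$" claim once the reverse inequality is in hand. Second, and this is the main content, I would prove $H(d+k,d) \le H(2k,k)$ for \emph{all} $k$ (with equality only claimed when $k<d$). By Lemma~\ref{lemma:simple} it suffices to bound the diameter of a \emph{simple} $d$-polytope $P$ with $d+k$ facets. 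Pick two vertices $u,v$ at maximum distance. Since $P$ is simple, $u$ lies on exactly $d$ facets; because $P$ has $d+k$ facets total, at most $k$ facets avoid $u$. The strategy is to find a face $F$ of $P$ (an intersection of some of the facets through $u$, or a suitable iterated vertex figure / the "carrier" of a geodesic from $u$ to $v$) whose dimension is $k$ and which has at most $2k$ facets, and which contains both $u$ and $v$ — or rather, to reduce to the non-revisiting conjecture: a non-revisiting path from $u$ to $v$ uses each facet at most once, and since only $\le k$ facets avoid $u$ and $\le k$ avoid $v$... Actually the cleanest route is via the equivalence with the $d$-step/non-revisiting conjectures promised in Section~\ref{sec:equivalences}: show that a long path in dimension $d$ with $d+k$ facets yields, by repeatedly "quotienting out" facets common to the endpoints (which corresponds to passing to a face — the inverse operation to wedging on the combinatorial level via one-point suspension), a polytope of dimension $k$ with $2k$ facets and at least as long a diameter.

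The hard part will be making precise the "unwedging" reduction: showing that from a simple $d$-polytope with $d+k$ facets and a pair of vertices at distance $\delta$, one can extract a $k$-dimensional polytope with exactly $2k$ facets in which two vertices are still at distance $\ge \delta$. The key lemma needed is that if $u$ and $v$ are two vertices of a simple polytope $P$ and $F$ is a facet containing \emph{both} of them, then $\dist_{G(P)}(u,v) = \dist_{G(F)}(u,v)$ — because along a geodesic one never needs to leave a facet that contains both endpoints (this is essentially the non-revisiting idea, and is where simplicity is used crucially). Iterating this, one restricts to the intersection of all facets containing both $u$ and $v$; if that intersection has dimension $e$ with $m$ facets, one checks $m - e \le k$ still holds and $e$ can be taken down to... and then one wedges back up to the balanced case $2k = 2k$, combining the two directions. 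I expect the bookkeeping of how many facets survive each restriction — ensuring we land exactly in the regime counted by $H(2k,k)$ rather than overshooting — to be the only real subtlety; the geometric inputs are all contained in Lemma~\ref{lemma:wedge}, Lemma~\ref{lemma:simple}, and the single-facet-per-step property of simple polytopes noted just before Lemma~\ref{lemma:simple}.
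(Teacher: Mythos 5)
Your final plan contains the two correct directions --- wedging up for $H(2k,k)\le H(d+k,d)$, and passing to a face for $H(d+k,d)\le H(2k,k)$ --- but there is a genuine gap in how you propose to execute the second one, and a fair amount of unnecessary machinery.

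The proposed ``key lemma'' --- that if $u,v$ lie in a common facet $F$ of a simple polytope then $\dist_{G(P)}(u,v)=\dist_{G(F)}(u,v)$ --- is both unnecessary and not provable at this point. The inequality $\dist_{G(P)}(u,v)\le\dist_{G(F)}(u,v)$ is trivial (paths in $F$ are paths in $P$), and that is the \emph{only} direction needed: it gives $\diam(P)\le \diam(F)\le H(n-1,d-1)$. The reverse inequality $\dist_{G(F)}(u,v)\le\dist_{G(P)}(u,v)$, i.e.\ that a geodesic in $P$ between two vertices of $F$ can always be taken inside $F$, is essentially a special case of the non-revisiting property (a non-revisiting path from $u$ to $v$ that starts in $F$ and ends in $F$ can never leave $F$), which by Theorem~\ref{thm:dstep-nonrevisiting} is \emph{equivalent} to the Hirsch conjecture itself. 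Building the proof of Theorem~\ref{thm:dstep-hirsch} on it would be circular. Relatedly, all the talk of ``unwedging'' and ``quotienting out facets common to the endpoints'' overcomplicates the reduction: nothing needs to be inverted, one simply restricts to a facet.

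The missing concrete step is the pigeonhole observation that makes the restriction work. In a $d$-polytope every vertex is incident to at least $d$ facets (simplicity is not even needed), so if the total number of facets is $n<2d$ then any two vertices $u,v$ share a facet $F$. That facet is $(d-1)$-dimensional and has at most $n-1$ facets (each of its facets is $F\cap F'$ for some other facet $F'$ of $P$), giving $H(n,d)\le H(n-1,d-1)$ whenever $n<2d$. Iterating this together with the wedging inequality $H(n,d)\le H(n+1,d+1)$ from Lemma~\ref{lemma:wedge} walks $(d+k,d)$ monotonically to $(2k,k)$ in either direction, and also gives the equality claim for $k<d$ by running the wedge inequality the other way. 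The bookkeeping you worried about (``landing exactly at $(2k,k)$ rather than overshooting'') is a non-issue: the gap $n-d$ is preserved by the restriction step, and one stops precisely when the dimension reaches $k$.
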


\begin{proof}
By Lemma~\ref{lemma:wedge},
\begin{equation}\label{inequalities}
H(n,d)\le H(n+1,d+1), \qquad \forall n,d
\end{equation}
so we only need to show that
\begin{equation}\label{inequalities2}
H(n,d)\le H(n-1,d-1)  \qquad \forall n<2d.
\end{equation}

Let $P$ be a polytope with $n < 2d$ and let $u$ and $v$ be vertices of it.
Since each vertex is incident to at least $d$ facets,  $u$ and $v$ lie in a common facet. This facet $F$ has dimension $d-1$, and each facet of it is the intersection of $F$ with another facet of $P$. Hence, $F$ has at most $n-1$ facets itself.
Since every path on $F$ is also a  path on $P$, we get \eqref{inequalities2}.
\end{proof}

\subsection{The $d$-step and non-revisiting conjectures}\label{sec:equivalences}

%The equivalence of the Hirsch conjecture to other two conjectures discussed in this section shows that the conjectured bound of $n-d$ is natural to consider. Before we state the conjectures and prove equivalences, we need the \emph{wedge} operation.

%\subsection{Proof of equivalences}

The intuition behind the Hirsch conjecture is that  to go from vertex $u$ to vertex $v$ of a polytope $P$, one does not expect to have to enter and leave the same facet several times.
This suggests the following conjecture:

\begin{conjecture}[The non-revisiting conjecture]\label{conj:nonrevisiting}
Let $P$ be a simple polytope. Let $u$ and $v$ be two arbitrary vertices of $P$. Then, there is a path from $u$  to $v$ which at every step enters a facet of $P$ that was not visited before.
\end{conjecture}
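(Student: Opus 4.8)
The natural plan is to prove the non-revisiting property by induction on the dimension $d$, reducing first to the ``$d$-step'' case $n=2d$ by an argument in the spirit of Lemma~\ref{lemma:wedge} and Theorem~\ref{thm:dstep-hirsch}, and then building the path by a facet-by-facet descent. The base cases $d\le 3$ are classical (Klee~\cite{Klee:PathsII}): a simple $3$-polytope has a planar, $3$-regular graph, and a non-revisiting path between any two vertices can be exhibited directly. For the inductive step, let $P$ be a simple $d$-polytope with $n$ facets and let $u,v$ be vertices. If $u$ and $v$ lie on a common facet $F$, then $F$ is a simple $(d-1)$-polytope whose facets are the intersections of $F$ with the remaining facets of $P$, and these inject into the facets of $P$; so a non-revisiting $u$--$v$ path inside $F$ furnished by the inductive hypothesis (the facet $F$ itself is never left) is non-revisiting in $P$. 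Since each vertex of a simple polytope lies on exactly $d$ facets, when $n<2d$ the facet sets of $u$ and $v$ necessarily meet and we are in this case; thus one may assume $n=2d$ and that $u$ and $v$ lie on no common facet, in which case the $d$ facets through $u$ and the $d$ facets through $v$ partition all $2d$ facets of $P$.

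In this remaining ``$d$-step'' case the idea would be to split the desired path into two legs: first travel non-revisitingly from $u$ to some vertex $u'$ inside a carefully chosen facet $F$ through $u$, using the inductive hypothesis in the $(d-1)$-polytope $F$; then leave $F$ along one edge, landing at a vertex $v'\notin F$, and continue from $v'$ to $v$ without ever re-entering $F$ or any facet abandoned during the first leg. The whole content of the conjecture is hidden in the phrase ``carefully chosen'': one needs $F$ and the exit edge selected so that the second leg is forced to be non-revisiting, and it is precisely here that no argument is known.

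Indeed this last step is exactly the $d$-step conjecture, and since the three conjectures are equivalent (as established below in this section) a proof of the non-revisiting conjecture along any route would settle the Hirsch conjecture. The true obstacle is therefore not a gap in the outline above but the lack of any effective handle on it: the only unconditional bounds available, the linear-in-$d$ estimate of Barnette and Larman~\cite{Barnette,Larman} and the quasipolynomial estimate of Kalai and Kleitman~\cite{Kalai:Quasi-polynomial}, fall far short of $n-d$, and Theorem~\ref{thm:eisenbrand} together with the examples of Eisenbrand et al.~\cite{Eisenbrand:limits-of-abstraction} shows that the non-revisiting-type axiom in isolation forces only a roughly quadratic diameter. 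Any successful argument must therefore exploit convex-geometric features of polytope graphs that go beyond the combinatorics of facet incidences, and none is presently known; in fact the expectation (see Section~\ref{sec:constructions}) is that the conjecture as stated is false.
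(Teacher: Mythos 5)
This statement is a \emph{conjecture}, not a theorem: the paper offers no proof of it, only reductions showing it equivalent to the Hirsch and $d$-step conjectures (Theorems~\ref{thm:nonrevisiting} and~\ref{thm:dstep-nonrevisiting}). Your proposal is therefore correct in the only sense available: you do not claim a proof, you correctly isolate the irreducible difficulty as the complementary-vertex case $n=2d$, and the reductions you do carry out (restriction to a common facet when $u$ and $v$ share one, which is automatic for $n<2d$ since each vertex of a simple $d$-polytope lies on exactly $d$ facets; wedging to handle $n>2d$) are exactly the arguments the paper uses in the proofs of Theorems~\ref{thm:dstep-hirsch} and~\ref{thm:nonrevisiting}. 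Your closing assessment also matches the paper's: the Eisenbrand et al.\ examples show that the non-revisiting axiom alone, stripped of polytopality, permits quadratic diameters, and the authors' stated expectation is that the conjecture is false. One small caution: a non-revisiting path found inside a facet $F$ by induction is non-revisiting in $P$ only because the facets of $F$ inject into facets of $P$ and $F$ itself is never left, as you note; but this injection can fail to be a bijection onto the facets of $P$ meeting $F$ if two facets of $P$ cut $F$ in the same ridge, which cannot happen for simple polytopes, so your step is sound there. No further comparison is possible, since there is nothing in the paper to compare a proof against.
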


Paths with the conjectured property, that they do not revisit any facet, are called \emph{non-revisiting paths}. (In the literature, they are also called $W_v$ paths and Conjecture~\ref{conj:nonrevisiting} is also known as the $W_v$ conjecture.) Non-revisiting paths are never longer than $n-d$: at each step, we must enter a different facet, and the $d$ facets that the initial vertex lies in cannot be among them. Thus, the non-revisiting conjecture implies the Hirsch conjecture. It turns out both are equivalent. A first step in the proof is the following analogue of 
Theorem~\ref{thm:dstep-hirsch} for the non-revisiting conjecture:

\begin{theorem}
\label{thm:nonrevisiting}
If all $k$-polytopes with $2k$ facets has the non-revisiting property, then the same holds for all $d$-polytopes with $d+k$ facets, for all $d$.
%The following properties are equivalent:
%\begin{enumerate}
%\item Every simple polytope $P$ has the non-revisiting property.
%\item Every simple $d$-polytope $P$ with $n=2d$ facets has the non-revisiting property.
%\end{enumerate}
\end{theorem}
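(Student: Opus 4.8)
The plan is to mimic, at the level of the non-revisiting property, the two-step reduction that proved Theorem~\ref{thm:dstep-hirsch}. Reformulated, what must be shown is:
\begin{enumerate}
\item (Wedging direction) If every $d$-polytope with $n$ facets has the non-revisiting property, then so does every $(d+1)$-polytope with $n+1$ facets that arises as a wedge; and, more usefully, every $(d+1)$-polytope with $n+1$ facets has the non-revisiting property provided all $d$-polytopes with $n$ facets do. This is the analogue of \eqref{inequalities}.
\item (Facet-restriction direction) If $n<2d$, then the non-revisiting property for $(d-1)$-polytopes with $n-1$ facets implies it for $d$-polytopes with $n$ facets. This is the analogue of \eqref{inequalities2}, and it is immediate: as in the proof of Theorem~\ref{thm:dstep-hirsch}, any two vertices $u,v$ of such a $d$-polytope $P$ lie on a common facet $F$, which is a $(d-1)$-polytope with at most $n-1$ facets; a non-revisiting path from $u$ to $v$ inside $F$ visits facets of $F$, each of which extends to a distinct facet of $P$, so the path is non-revisiting in $P$ as well.
\end{enumerate}
Combining these two reductions, starting from a $d$-polytope with $d+k$ facets, one first applies the facet-restriction step repeatedly (while the facet count is less than twice the dimension) to descend to a polytope with exactly twice as many facets as its dimension, namely a $k'$-polytope with $2k'$ facets for some $k'\le k$; if $k'<k$ one can alternatively climb up via wedges. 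Either way the statement follows once the wedging direction is in place, because the hypothesis is exactly that all $k$-polytopes with $2k$ facets are non-revisiting, and the inequality chain connects $(d,d+k)$ to $(k,2k)$ through pairs $(m,m+k)$.

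The substantive step, and the main obstacle, is the wedging direction. Here one needs the converse flavor of Lemma~\ref{lemma:wedge}: not just that wedging does not decrease diameter, but that a non-revisiting path in $P$ can be \emph{lifted} to a non-revisiting path in $\wed_F(P)$ between appropriately chosen vertices, and conversely that a non-revisiting path in $\wed_F(P)$ \emph{projects} to one in $P$. Concretely, $\wed_F(P)$ has, for each facet $G\neq F$ of $P$, a ``vertical'' facet $\hat G$ projecting to $G$, plus two ``roof'' facets $F_0,F_1$ whose intersection projects to $F$; each vertex of $P$ not on $F$ lifts to a unique vertex of the lower roof $F_0$ and a unique vertex of the upper roof, while each vertex of $F$ has a single lift on $F_0\cap F_1$. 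Given two vertices $u,v$ of $P$, choose their lifts $\hat u\in F_0$, $\hat v\in F_1$ (say). A path from $u$ to $v$ in $P$ that enters facet $F$ at some point can be lifted so that the portion before entering $F$ runs in $F_0$, then one step crosses from $F_0$ to $F_1$, then the remainder runs in $F_1$; because along the lifted path we additionally enter $F_0$ at the start and $F_1$ at the crossing step (and never revisit either), while each $\hat G$ is entered exactly when $G$ was entered downstairs, the non-revisiting property is preserved. The delicate bookkeeping is to handle the case where the original path in $P$ does \emph{not} visit $F$, or visits it at the very start or end, and to verify that the single ``crossing'' step genuinely uses a new facet; this is where one must invoke simplicity of $P$ (Lemma~\ref{lemma:simple} lets us assume it) so that entering/leaving facets along an edge is controlled one-at-a-time, exactly as stressed in the discussion following Lemma~\ref{lemma:simple}. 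I expect this lifting argument, together with a clean statement of how wedge facets correspond to facets of $P$, to be the only place where real work is needed; the rest is the same formal juggling of inequalities as in Theorem~\ref{thm:dstep-hirsch}.
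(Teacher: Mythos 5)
Your facet-restriction step (item~2) is correct and matches the paper. The genuine gap is in the wedging step (item~1): the implication runs the wrong way. To handle the case $d<k$ (equivalently $n=d+k>2d$) you need the analogue of inequality~\eqref{inequalities}, $H(n,d)\le H(n+1,d+1)$, which in non-revisiting language is ``non-revisiting at $(n+1,d+1)$ implies non-revisiting at $(n,d)$''---the \emph{smaller} parameter pair inherits the property from the \emph{larger} one. You instead assert the reverse, that non-revisiting for $d$-polytopes with $n$ facets yields it for $(d+1)$-polytopes with $n+1$ facets, and the lifting argument you develop (lift a non-revisiting path in $P$ to one in $\wed_F(P)$) is aimed squarely at that reversed claim. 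Even granting the lift, the reversed statement is both not what the theorem needs and inapplicable in general: it could only say something about $(d+1)$-polytopes that happen to be wedges, whereas the conclusion must cover \emph{all} $(d+1)$-polytopes with $n+1$ facets, and a generic one is not a wedge.

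The paper sidesteps both problems by arguing in the contrapositive and by \emph{projecting}, not lifting. Starting from a counterexample $P$ at $(n,d)$ with $n>2d$, with vertices $u,v$ such that every $u$--$v$ path revisits some facet, it wedges $P$ over a facet $F$ containing neither $u$ nor $v$ and exhibits $\wed_F(P)$ as a counterexample at $(n+1,d+1)$ between the lifts $u_1\in F_1$ and $v_2\in F_2$: any $u_1$--$v_2$ path in $\wed_F(P)$ projects to a $u$--$v$ path in $P$, which revisits some facet $G$, and then the original path revisits the facet of $\wed_F(P)$ over $G$ if $G\ne F$, or revisits $F_1$ or $F_2$ if $G=F$. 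Iterating this (together with restriction when $n<2d$) always pushes a purported counterexample toward the case $n=2d$. Your roadmap paragraph is also off: both operations preserve $n-d$, so the reduction lands at $(2k,k)$ directly, and the suggested detour through $(2k',k')$ with $k'<k$ followed by ``climbing via wedges'' would never return to the pair $(2k,k)$.
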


\begin{proof}
Let $P$ be a polytope with $n\ne 2d$ and suppose it does not have the non-revisiting property. That is, there are vertices $u$ and $v$ such that every path from $u$ to $v$ revisits some facet that it previously abandons.
We will construct another polytope $P'$ without the non-revisiting property and with:
\begin{itemize}
\item One less facet and dimension than $P$ if $n < 2d$, and
\item One more facet and dimension than $P$ if $n > 2d$.
\end{itemize}

\begin{figure}[hbt]
  \begin{center}
    \includegraphics[scale=0.83]{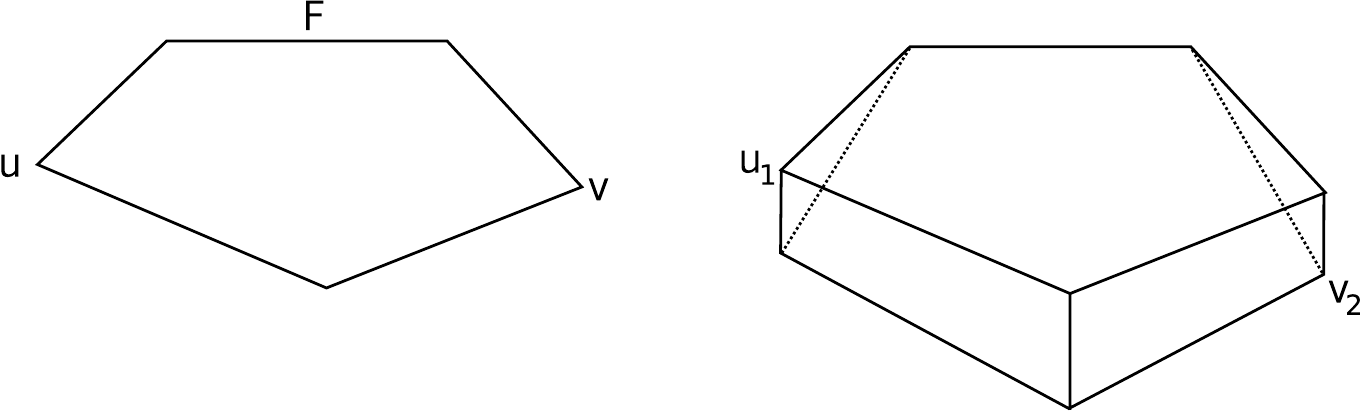}
    \caption{The pentagon $P$ and the wedge $P'=\wed_F(P)$ over its facet $F$: the upper pentagonal facet of $P'$ is $F_1$ and the lower pentagonal facet is $F_2$.}
       \label{fig:wedge-when-n-larger-2d}
  \end{center}
\end{figure}

In the first case, $u$ and $v$ lie in a common facet $F$ and we simply let $P'=F$. 
In the second case, let $F$ be a facet not containing $u$ nor $v$ and let $P'=\wed_F(P)$ be the wedge over $F$. Let $F_1$ and $F_2$ be the two facets of $P'$ whose intersection projects to $F$. Let $u_1$ and $v_2$ be the vertices of $P'$ that project to $u$ and $v$ and lie, respectively, on $F_1$ and $F_2$ (see Figure~\ref{fig:wedge-when-n-larger-2d}). Now, consider a path from $u_1$ to $v_2$ on $P'$ and project it to a path from $u$ to $v$ on $P$:
\begin{itemize}
\item If the path on $P$ revisits a facet (call it $G$) other than $F$, then the path on $P'$ revisits the facet that projects to $G$.

\item If the path on $P$ revisits $F$, then the path on $P'$ revisits either $F_1$ or $F_2$.
\end{itemize}
\end{proof}

\begin{remark}
\label{rem:non-hirsch-wedge}
\rm
In the proof of Lemma~\ref{lemma:wedge} we noted that, applied to a Hirsch-sharp polytope $P$ the wedge operator produced either another Hirsch-sharp polytope or a counterexample to the Hirsch conjecture. The last proof shows that the latter can happen only if $P$ does not have the non-revisiting property.
\end{remark}

Theorems~\ref{thm:dstep-hirsch} and~\ref{thm:nonrevisiting} say that both in the Hirsch and the non-revisiting conjectures the crucial case is that of $n=2d$. It is not surprising then that they are equivalent, since in this case they both almost restrict to the following:

\begin{conjecture}[The $d$-step conjecture]\label{conj:d-step}
Let $P$ be a simple $d$ polytope with $2d$ facets and let $u$ and $v$ be two complementary vertices (i.e., vertices not lying in a common facet). Then, there is a path of length $d$ from $u$ to $v$.
\end{conjecture}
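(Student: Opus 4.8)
The statement is the $d$-step conjecture, the $n = 2d$ case to which Theorems~\ref{thm:dstep-hirsch} and~\ref{thm:nonrevisiting} reduce the Hirsch and non-revisiting conjectures; since no further reduction is available at $n = 2d$, any proof must attack this case head on. The natural plan is induction on $d$. The base cases $d \le 6$ are covered by the exact values $H(8,4) = 4$, $H(10,5) = 5$ and $H(12,6) = 6$ recorded in Theorem~\ref{thm:hirschforsmalln}, together with the elementary fact that two complementary vertices of a simple $d$-polytope are always at distance at least $d$ (an edge leaves exactly one facet, and the two vertices lie in $2d$ pairwise-distinct facets). For the inductive step one works with the non-revisiting formulation, the one that interacts cleanly with dimension-changing operations: given a simple $d$-polytope $P$ with facets $F_1,\dots,F_d \ni u$ and $G_1,\dots,G_d \ni v$ (these $2d$ being all of them), one picks some $F_i$, restricts to it, and tries to build a non-revisiting $u$-to-$v$ route by splicing a route inside $F_i$ (furnished by the inductive hypothesis) with a single descent step off $F_i$ toward $v$.

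The hard part, and the reason this is a \emph{conjecture}, is that this induction does not close and no repair is known. Concretely, a facet $F_i$ of a simple $d$-polytope with $2d$ facets generically has $2d-1$ facets of its own, one more than the $2(d-1)$ that the inductive hypothesis can handle; so the restriction must be to a face of higher codimension, and then $v$ lies in none of the facets one is allowed to touch, which is exactly the configuration where a forced revisit cannot be excluded by any known bookkeeping. Every strengthening of the conjecture that would make such an argument go through is in fact \emph{false}: the unbounded-polyhedron version (Klee--Walkup) and the monotone version (Todd) of Section~\ref{sec:unbounded-monotone}, and the topological version for simplicial $(d-1)$-spheres (Mani--Walkup) of Section~\ref{sec:topological}. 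Moreover, by Theorem~\ref{thm:eisenbrand} the purely combinatorial abstraction ``between $u$ and $v$ there is a path through sets containing $u \cap v$'' --- which is all the Kalai--Kleitman and Barnette--Larman proofs use --- admits diameter $\Omega(n^2/\log n)$; hence a proof of the $d$-step conjecture must invoke genuinely geometric convexity that no current technique exploits.

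So the realistic proposal is not a proof but a map of what is attainable: verify the small cases $d \le 6$ and $n - 2d \le 6$ by the exhaustive and computational methods behind Theorem~\ref{thm:hirschforsmalln} and Section~\ref{sec:small-dimension}; for general $d$, settle for the upper bounds $H(2d,d) \le (2d)^{\log_2 d + 1}$ and $H(2d,d) \le d\,2^{d-2}$ coming from Theorems~\ref{thm:quasipolynomial} and~\ref{thm:linear-in-fixed-d}, both far above the conjectured value $d$; and treat the gap between $d$ and these bounds as the central open problem. Indeed, as the overview of this paper states, the prevailing expectation is that the $d$-step conjecture is \emph{false}, so the most honest reading of a ``proof proposal'' here is a precise localization of the point at which every known method breaks down --- namely, the loss of exactly one facet when a $2d$-facet simple $d$-polytope is restricted to a facet.
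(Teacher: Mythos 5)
You were right not to manufacture a proof: the statement is Conjecture~\ref{conj:d-step}, an open problem, and the paper contains no proof of it --- only the reductions (Theorems~\ref{thm:dstep-hirsch}, \ref{thm:nonrevisiting} and~\ref{thm:dstep-nonrevisiting}) showing it equivalent to the Hirsch and non-revisiting conjectures, together with the verified small cases $d\le 6$ coming from Theorem~\ref{thm:hirschforsmalln}. Your account of the state of the art --- the elementary lower bound of $d$ for complementary vertices, the failure of the naive induction because a facet of a simple $d$-polytope with $2d$ facets can itself have $2d-1$ facets (one more than the inductive hypothesis covers), the falsity of the unbounded, monotone and topological strengthenings, and the Kalai--Kleitman and Larman upper bounds specialized to $n=2d$ --- is accurate and consistent with the paper; the one small slip is that the quadratic lower bound $cn^2/\log n$ for the combinatorial abstraction is not part of Theorem~\ref{thm:eisenbrand} itself but of the surrounding discussion of~\cite{Eisenbrand:limits-of-abstraction}.
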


There is still something to be proved, though. If $u$ and $v$ are not complementary vertices in a $d$-polytope with $2d$ facets then the $d$-step conjecture does not directly imply the other two. But in this case $u$ ad $v$ lie in a common facet, so the proof of equivalence is not hard to finish via induction:

\begin{*theorem}[Klee-Walkup~\cite{Klee:d-step}]
\label{thm:dstep-nonrevisiting}
The Hirsch, non-revisiting, and $d$-step Conjectures~\ref{conj:hirsch},~\ref{conj:nonrevisiting}, and~\ref{conj:d-step} are equivalent.
\end{*theorem}

\subsection{The Klee-Walkup polytope $Q_4$} \label{sec:Q4}

In their seminal 1967 paper~\cite{Klee:d-step} on the Hirsch conjecture and related issues, Klee and Walkup describe a $4$-polytope $Q_4$ with nine facets and diameter five. Innocent as this might look, this first ``non-trivial'' Hirsch-sharp polytope is at the basis of the construction of every remaining Hirsch-sharp polytope known to date (see Section~\ref{sec:many-hirsch-sharp}). It is also instrumental in disproving the unbounded and monotone variants of the Hirsch conjecture, which we will discuss in Section~\ref{sec:unbounded-monotone}. Moreover, 
its existence is something of an accident: Altshuler, Bokowski and Steinberg~\cite{ABS-3spheres} list all  combinatorial types of simplicial spheres with nine vertices (there are $1296$, $1142$ of them polytopal); among them, the polar of $Q_4$ is the only one that is Hirsch-sharp.

Here we describe $Q_4$ in the polar view. That is, we will describe a simplicial $4$-polytope $Q_4^*$ with nine vertices and show that its ridge-diameter is five. The vertices of $Q_4^*$ are:

\[
\begin{tabular}{lll}
$a:= (-3,3,1,2)$,&&$e:= (3,3,-1,2)$,\\
$b:=(3,-3,1,2),$&&$f:=(-3,-3,-1,2),$\\
$c:=(2,-1,1,3)$,&&$g:=(-1,-2,-1,3)$,\\
$d:=(-2,1,1,3)$,&&$h:=(1,2,-1,3)$,\\
&$w:=(0,0,0,-2)$.
\end{tabular}
\]
[The simple polytope $Q_4$ is obtained  converting each vertex $v$ of $Q_4^*$ into an
inequality $v \cdot {\bf x}\le 1$. For example, the inequality corresponding to vertex $a$ above is $-3x_1+3 x_2+ x_3+ 2x_4\le 1$].

%1   0   0   0 -3
%1 -3   3   1   2  
%1   3 -3   1   2
%1   2 -1   1   3
%1 -2   1   1   3
%1   3   3 -1   2 
%1 -3 -3 -1   2
%1 -1 -2 -1   3
%1   1   2 -1   3

The key property of this polytope is that:
\begin{theorem}[Klee-Walkup~\cite{Klee:d-step}]
\label{thm:klee-walkup}
Any path in $Q_4^*$ from the tetrahedron $abcd$ to the tetrahedron $efgh$ needs at least five steps.
\end{theorem}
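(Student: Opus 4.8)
The plan is to verify the claim by a finite but carefully organized case analysis of paths in the ridge-graph of $Q_4^*$. First I would set up the combinatorial skeleton: since $Q_4^*$ is a simplicial $4$-polytope with nine vertices $\{a,b,c,d,e,f,g,h,w\}$, its facets are tetrahedra (4-element subsets), and two facets are adjacent in the dual graph exactly when they share a ridge (a triangle, i.e. a common 3-element subset). So I would first determine the complete facet list of $Q_4^*$ from the given coordinates — computing which $\binom{9}{4}$ subsets actually bound facets, e.g. by checking that the corresponding affine hyperplane has all other vertices strictly on one side. This is the one genuinely computational step, and it is the step I expect to be the main obstacle in the sense of bookkeeping (there are many subsets to test), though each individual test is a routine sign computation of a $4\times 4$ determinant. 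Once the facet list is in hand, the dual graph $G^\Delta(Q_4^*)$ is a fixed finite graph and the theorem becomes: $\operatorname{dist}(abcd, efgh) \ge 5$ in this graph.

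To prove the distance lower bound without brute-force enumeration of all walks, I would exploit a parity/separation invariant coming from the structure of the coordinates. Notice the vertices split naturally by their third coordinate: $a,b,c,d$ have $x_3 = 1$, while $e,f,g,h$ have $x_3 = -1$, and $w$ has $x_3 = 0$. The source facet $abcd$ uses only the ``$+1$ layer'' and the target facet $efgh$ uses only the ``$-1$ layer''. The key structural claim I would isolate and prove is: there is no facet of $Q_4^*$ containing three vertices from one layer and one from the other unless it involves $w$, and more precisely, any facet meeting both layers must contain $w$ (or satisfies some similarly restrictive pattern read off from the facet list). Granting such a claim, any path from $abcd$ to $efgh$ must at some point pass through a facet containing $w$; and then, analyzing the ``link'' of $w$ — the set of facets through $w$ and how they connect $+$-layer-heavy facets to $-$-layer-heavy ones — forces a definite number of steps on each side. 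I would phrase this as: the path must first travel from $abcd$ into the star of $w$, then cross through the star of $w$, then leave it toward $efgh$, and count the minimum edges in each phase, showing the total is at least $5$.

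Concretely, I expect the argument to run: (i) $abcd$ is not adjacent to any facet through $w$ (needs $\ge 2$ steps to reach one), by inspecting which triangles of $abcd$ extend to a facet; (ii) symmetrically $efgh$ is not adjacent to any facet through $w$; (iii) within the star of $w$, getting from a facet reachable-from-the-$abcd$-side to one reachable-from-the-$efgh$-side costs $\ge 1$ step; combining gives $\ge 2 + 1 + 2 = 5$. Each of (i)–(iii) is checked against the explicit facet list. An alternative, more robust route — if the clean layer argument turns out to have exceptions — is simply to compute the breadth-first-search levels from $abcd$ in the dual graph and observe that $efgh$ first appears at level $5$; this is a completely finite verification and I would present it as the fallback, but the layer-and-star-of-$w$ argument is preferable because it explains \emph{why} the diameter is large and makes the Hirsch-sharpness transparent. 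The main obstacle, to reiterate, is purely the initial determination of the facet list; once that table is fixed, everything else is elementary graph distance reasoning on a small explicit graph.
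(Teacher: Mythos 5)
Your proposed ``layer'' invariant fails, and this is not a minor wrinkle but the collapse of the whole planned argument. You conjecture that every facet of $Q_4^*$ meeting both the $x_3=+1$ layer $\{a,b,c,d\}$ and the $x_3=-1$ layer $\{e,f,g,h\}$ must contain $w$. In fact the opposite is closer to the truth: of the fifteen facets of $Q_4^*$ that \emph{avoid} $w$, all but the two endpoints $abcd$ and $efgh$ mix the layers. For instance $acde$ (three vertices from the $+1$ layer, one from the $-1$ layer) and $adeh$, $adfg$, $cdeh$, $cdfg$, $bceh$, $bcfg$, $bcdf$, etc., are all facets of $Q_4^*$ not containing $w$. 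So a path from $abcd$ to $efgh$ has no obligation whatsoever to enter the star of $w$, and indeed the shortest ones don't. Consequently your steps (i)--(iii) never get off the ground.

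Your step (i) is also false on its own terms: $abcd$ \emph{is} adjacent to facets through $w$. The triangles $abc$ and $bcd$ are each contained in exactly one facet of $Q_4^*$ besides $abcd$, and those facets are $abcw$ and $bcdw$. So $abcd$ is one step, not two, from the star of $w$.

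It is worth noting that the correct use of $w$ runs in the \emph{opposite} direction from what you propose. Paths that do pass through a facet containing $w$ are the easy case, disposed of by pure vertex counting: starting from $abcd$, such a path must introduce all five of $w,e,f,g,h$, and each ridge-crossing introduces at most one new vertex, so the length is at least $5$ immediately. The substantive case is paths that \emph{avoid} $w$ entirely. Those live in the anti-star of $w$, a $15$-tetrahedron subcomplex, and there one checks directly that no tetrahedron is within two steps of both $abcd$ and $efgh$ in the dual graph. Your BFS fallback, restricted to this anti-star, is essentially that check; the explanatory gain you were hoping for from the layer structure is not available, because $Q_4^*$ is precisely engineered so that the layers are heavily entangled away from $w$.

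So: drop the layer/star-of-$w$ decomposition. Keep the facet-list computation, split paths by whether they use $w$, handle the $w$-paths by vertex counting, and handle the $w$-free paths by examining the small explicit dual graph of the anti-star of $w$. That is what the proof actually requires.
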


To prove this, you may simply  input these coordinates into any software able to compute the (dual) graph of a polytope. Our suggestion for this is {\tt polymake}~\cite{polymake}.
%POINTS 
%1   0   0   0 -2
%1 -3   3   1   2  
%1   3 -3   1   2
%1   2 -1   1   3
%1 -2   1   1   3
%1   3   3 -1   2 
%1 -3 -3 -1   2
%1 -1 -2 -1   3
%1   1   2 -1   3
%
%VERTICES_IN_FACETS
%{2 3 7 8}
%{0 1 2 3}
%{1 2 3 4}
%{2 3 6 7}
%{2 3 4 6}
%{0 2 4 6}
%{0 2 6 7}
%{0 1 2 4}
%{1 6 7 8}
%{0 1 6 8}
%{1 4 7 8}
%{0 1 4 6}
%{1 4 6 7}
%{3 4 6 7}
%{3 4 7 8}
%{0 5 6 8}
%{5 6 7 8}
%{0 1 5 8}
%{1 4 5 8}
%{3 4 5 8}
%{0 1 3 5}
%{1 3 4 5}
%{0 5 6 7}
%{0 2 5 7}
%{2 5 7 8}
%{0 2 3 5}
%{2 3 5 8}
But we believe that fully understanding this polytope can be the key to the construction of counter-examples to the Hirsch conjecture, so it is worth presenting a hybrid computer-human proof. It is worth mentioning that the coordinates we use for $Q_4^*$, much smaller than the original ones in~\cite{Klee:d-step}, were obtained as a by-product of the description of $Q_4^*$ contained in this proof.

\begin{proof}
Paths through some intermediate tetrahedron containing the vertex $w$ necessarily have at least five steps: apart of the step that introduces $w$, four more are needed to introduce, one by one, the four vertices  $e$, $f$, $g$ and $h$. 

This means we can concentrate on the subcomplex $K$ of $\partial Q_4^*$ consisting of tetrahedra that do not use $w$. This subcomplex is called the \emph{anti-star} of $w$ in $\partial Q_4^*$. We claim (without proof, here is where you need your computer) that this subcomplex consists of the $15$ tetrahedra in Figure~\ref{fig:dualK}.
\begin{figure}[hbt]
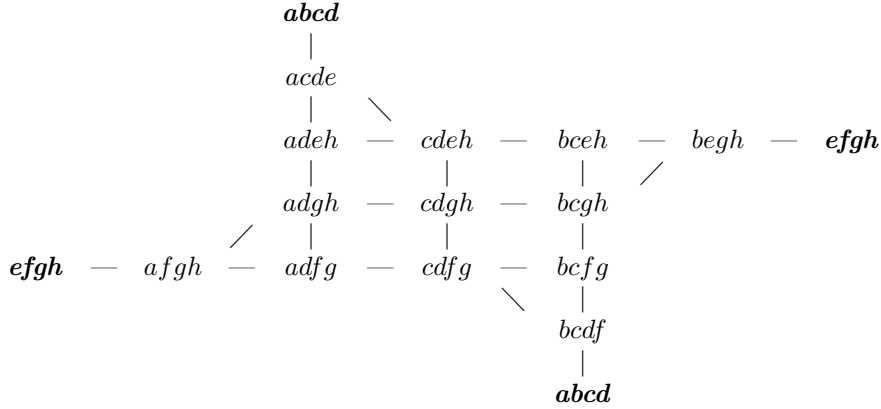

\[
\begin{matrix}
 & & \textbf{\textit{abcd}} && && &&&\\
 && | & & && &&  &\\
 & & acde && && &&&\\
  && | & \diagdown & &&&& &\\
  && adeh &\text{---} & cdeh &\text{---} & bceh &\text{---} & begh&\text{---}\quad  \textbf{\textit{efgh}} \\
  && | && | && | & \diagup & &\\
  && adgh &\text{---} & cdgh &\text{---} & bcgh  && &\\
   & \diagup & | && | && | &&&\\
 \textbf{\textit{efgh}}\quad\text{---}\quad afgh & \text{---} & adfg &\text{---} & cdfg &\text{---} & bcfg &&&\\
   && && & \diagdown & |  &&&\\
  && && & & bcdf & &&\\
   && && & & |  &&&\\
     && && & & \textbf{\textit{abcd}} &  &&\\
 \end{matrix}
\]
\caption{The dual graph of the subcomplex $K$}\label{fig:dualK}
\end{figure}

Figure~\ref{fig:dualK} shows adjacencies among tetrahedra; that is, it shows the dual graph of $K$. The two tetrahedra $abcd$ and $efgh$ that we want to join are in boldface and appear repeated in the figure, to better reflect symmetry. The proof finishes by noticing that there is no intermediate tetrahedron that can be reached in two steps from both $abcd$ and $efgh$. Hence, five steps are needed to go from one to the other.
\end{proof}

\subsection{Many Hirsch-sharp polytopes?}\label{sec:hirsch-sharp}

Recall that we call a $d$-polytope (or polyhedron) with $n$ facets \emph{Hirsch-sharp} if its diameter is exactly $n-d$, as happens with the Klee-Walkup polytope of the previous section. Here we describe several ways to construct them.

\subsubsection*{Trivial Hirsch-sharp polytopes}\label{sec:examples}

Constructing Hirsch sharp $d$-polytopes with any number of facets not exceeding $2d$ is easy. For this reason we call such Hirsch-sharp polytopes \emph{trivial}:

\begin{enumerate}

%\item 
%{\bf Cubes.} The $d$-dimensional cube $I^d$ has $2d$ facets. Distance between vertices is the Hamming distance (number of different coordinates); thus the diameter of $I^d$ is $d$. 

\item 
{\bf Product.}
If $P$ and $Q$ are Hirsch-sharp, then so is their Cartesian product $P\times Q$. Indeed, the dimension, number of facets, and diameters of $P\times Q$ are the sum of those of $P$ and $Q$. For the diameter, if we want to go from vertex $(u_1,v_1)$ to vertex $(u_2, v_2)$ we can do so by going from $(u_1,v_1)$ to $(u_2,v_1)$ along $P\times \{v_1\}$ and then to $(u_2,v_2)$ along $\{u_2\}\times Q$; there is no better way.

%\item 
%{\bf Products of simplices.}
In particular,  any product of \emph{simplices} of any dimension is Hirsch-sharp. 
The dimension of  $\Delta_{i_1} \times \cdots \times \Delta_{i_k}$, where $\Delta_i$ denotes the $i$-simplex,
 is $\sum_{j=1}^k i_j$,  its number of facets is  $\sum_{j=1}^k (i_j+1)$, and its diameter is $k$. 

\begin{corollary}
\label{coro:nlessthan2d}
For every $d< n \le 2d$ there are simple $d$-polytopes with $n$ facets and diameter $n-d$.
\end{corollary}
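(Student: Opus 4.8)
The plan is to realize the desired polytope as a product of simplices, exactly as in the ``Product'' construction above. Given $d<n\le 2d$, set $k:=n-d$, so that $1\le k\le d$. Since $k\le d$, I can choose positive integers $i_1,\dots,i_k$ with $i_1+\cdots+i_k=d$; concretely one may take $i_1=\cdots=i_{k-1}=1$ and $i_k=d-k+1\ge 1$. Then I would define
\[
P:=\Delta_{i_1}\times\cdots\times\Delta_{i_k}.
\]

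Next I would verify the three numerical claims. The dimension of a product is the sum of the dimensions, so $\dim P=\sum_j i_j=d$. Each $\Delta_{i_j}$ has $i_j+1$ facets, and the facets of a product $P_1\times P_2$ are exactly the sets $F\times P_2$ and $P_1\times G$ with $F$, $G$ facets of $P_1$, $P_2$; hence $P$ has $\sum_j(i_j+1)=d+k=n$ facets. For the diameter, each simplex $\Delta_{i_j}$ is Hirsch-sharp (it has $i_j+1$ facets, dimension $i_j$, and diameter $1$), and by the ``Product'' paragraph above the product of Hirsch-sharp polytopes is again Hirsch-sharp; thus $\diam(G(P))=n-d$. (Equivalently, the diameter of a product is the sum of the diameters, which here is $k=n-d$.)

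Finally I would observe that $P$ is simple: a simplex is simple, and if $P_1$ and $P_2$ are simple then so is $P_1\times P_2$, because a vertex $(u,v)$ of $P_1\times P_2$ lies in precisely those facets $F\times P_2$ with $u\in F$ and $P_1\times G$ with $v\in G$, hence in exactly $(\dim P_1)+(\dim P_2)=\dim(P_1\times P_2)$ facets. Iterating over the $k$ factors shows that $P$ is a simple $d$-polytope with $n$ facets and diameter $n-d$, as required.

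There is essentially no serious obstacle here; the only point requiring a bit of care is that every factor must be at least a segment, i.e.\ $i_j\ge 1$, which is exactly what the hypothesis $n\le 2d$ (equivalently $k\le d$) guarantees — had we allowed $n>2d$ the same recipe would force some $i_j=0$ and break down, which is consistent with this being genuinely a statement about the range $d<n\le 2d$.
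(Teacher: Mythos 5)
Your proof is correct and takes essentially the same approach as the paper: both set $k=n-d$, choose a partition $i_1+\cdots+i_k=d$ into $k$ positive parts, and take the product of simplices $\Delta_{i_1}\times\cdots\times\Delta_{i_k}$. You simply spell out the verification of dimension, facet count, diameter, and simplicity in more detail than the paper, which relies on the preceding ``Product'' discussion.
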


\begin{proof}
Let $k=n-d\le d$ and let $i_1,\dots,i_k$ be any partition of $d$ into $k$ positive integers (that is, $i_1 + \cdots +i_k=d$). Let  $P=\Delta_{i_1} \times \cdots \times \Delta_{i_k}$.
\end{proof}

\item {\bf Intersection of two affine orthants.}
Let $k = n-d \le d$ and let $u$ be the origin in $\R^d$. Let $v=(1,\dots,1,0,\dots,0)$ be the point whose first $k$ coordinates are $1$ and whose remaining $d-k$ coordinates are $0$.

Consider the polytope $P$ defined by the following $d+k$ inequalities:
\[
x_i \ge 0,\quad \forall i; \qquad \psi_j({\bf x})\ge 0, \quad j=1,\dots, k,
\]
where the $\psi_i$ are affine linear functionals that vanish at $v$ and are positive at $u$. No matter what choice we make for the $\psi_j$'s, as long as they are sufficiently generic to make $P$ simple, $P$ will have diameter (at least) $k$; to go from $v$ to $u$ we need to enter the $k$ facets $x_j=0$, $j=1,\dots, k$, and each step gets you into at most one of them.  In principle, $P$ may be an unbounded polyhedron; but if one of the $\psi_j$'s is, say, $k- \sum x_i $, then it will be bounded.

\end{enumerate}

%So, Hirsch-sharp polytopes with $n \le 2d$ are easy to find. We consider them ``trivial'' Hirsch-sharp polytopes. In the following sections we show examples of ``non-trivial'' ones. 

Hirsch-sharp unbounded polyhedra with any number of facets are also easy to obtain:

\begin{*proposition}
For every $n \ge d$ there are simple $d$-polyhedra with $n$ facets and diameter $n-d$.
\end{*proposition}

%\begin{proof}
%The proof is by induction on $n$, the base case $n=d$ being the orthant $\{x_i\ge 0, \forall i\}$.
%Our inductive hypothesis is not only that  we have constructed a $d$-polyhedron $P$ with $n-1$ facets and diameter $n-d-1$; also, that vertices $u$ and $v$ at distance $n-d-1$ exist in it with $v$ incident to some unbounded ray $l$. Let $H$ be a supporting hyperplane of $l$, and tilt it slightly at a point $v'$ in the interior of $l$ to obtain a new hyperplane $H'$. See Figure~\ref{fig:unbounded-hirsch-sharp}. Then, the polyhedron $P'$ obtained cutting $P$ with the tilted hyperplane $H'$ has $n$ facets and diameter $n-d$; $v$ is the only vertex adjacent to $v'$ in the graph, so we need at least $1 + (n-d-1)$ steps to go from $v'$ to $u$.
%\end{proof}

%\begin{figure}[htb]
%\begin{center}
%\includegraphics[width=1.7in]{unbounded-hirsch-sharp.pdf}
%\caption{Tilting the hyperplane $H$, example in dimension two}
%\label{fig:unbounded-hirsch-sharp}
%\end{center}
%\end{figure}

\subsubsection*{Non-trivial Hirsch-sharp polytopes}\label{sec:many-hirsch-sharp}

%In Section~\ref{sec:Q4} we saw the first example of a non-trivial Hirsch-sharp polytope, the Klee-Walkup $4$-polytope $Q_4$ with $9$ facets. Theorem \ref{thm:hirsch-sharp} uses $Q_4$ to construct other Hirsch-sharp polytopes, which together prove the following:

\begin{*theorem}[Fritzsche-Holt-Klee~\cite{Fritzsche99morepolytopes,Holt:Hsharpd7,Holt:Many-polytopes}]
\label{thm:hirsch-sharp}
Hirsch-sharp $d$-polytopes with $n$ facets exist in at least the following cases:
%(1) $n\le 2d$;
(1) $n\le 3d-3$; and
(2) $d\ge 7$.
\end{*theorem}

Both parts are proved using the Klee-Walkup polytope $Q_4$ as a starting block, from which more complicated polytopes are obtained. In a sense, \emph{$Q_4$ is the only non-trivial Hirsch-sharp polytope we know of}.

The case $n\le 3d-3$ was first proved in 1998~\cite{Holt:Many-polytopes}, and follows from the iterated application of the next lemma to the Klee-Walkup polytope $Q_4$. Part 2 was proved in~\cite{Fritzsche99morepolytopes} for $d\ge 8$ and was improved to $d\ge 7$ in~\cite{Holt:Hsharpd7}. We sketch its proof in~\cite{Kim-Santos-companion}.

\begin{lemma}[Holt-Klee~\cite{Holt:Many-polytopes}]
\label{lem:3d-3}
If there are Hirsch-sharp $d$-polytopes with $n > 2d$ facets, then there are also Hirsch-sharp $(d+1)$-polytopes with $n+1$, $n+2$, and $n+3$ facets.
\end{lemma}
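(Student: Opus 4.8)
The plan is to use the wedge operation from Section \ref{sec:wedging} as the main tool, combined with a careful choice of the facet over which to wedge. Recall from Lemma \ref{lemma:wedge} that if $P$ is a $d$-polytope with $n$ facets, then $\wed_F(P)$ is a $(d+1)$-polytope with $n+1$ facets and diameter at least $\diam(P)=n-d$. Since a $(d+1)$-polytope with $n+1$ facets is Hirsch-sharp exactly when its diameter equals $(n+1)-(d+1)=n-d$, the wedge $\wed_F(P)$ is Hirsch-sharp provided its diameter does \emph{not} increase beyond $n-d$; by Remark \ref{rem:non-hirsch-wedge} the only way the diameter could jump is if $P$ fails the non-revisiting property, which we should expect here since $P$ is a genuine non-trivial Hirsch-sharp polytope. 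So the first and most delicate step is to identify a facet $F$ of $P$, and a pair of vertices $u_1, v_2$ of $\wed_F(P)$ realizing the diameter, for which we can \emph{bound the diameter from above} by $n-d$ — i.e. exhibit a path of length $n-d$ between every pair of vertices of $\wed_F(P)$. The natural candidate is to pick $F$ so that $u$ and $v$ (a diameter-realizing pair in $P$) both lie on $F$, or lie on facets meeting $F$, so that the ``vertical'' lift of geodesics in $P$ together with one extra step to change levels stays within budget; one must check all vertex pairs, including those on the two big facets $F_1, F_2$.

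Once the base case $n+1$ is handled, the cases $n+2$ and $n+3$ should follow by iterating the wedge, but now wedging over facets of the already-wedged polytope; each wedge raises both $n$ and $d$ by one, so to get a $(d+1)$-polytope with $n+2$ or $n+3$ facets from a $d$-polytope with $n$ facets we cannot simply wedge repeatedly (that would also raise the dimension each time). Instead the plan is to first pass to a $d$-polytope with $n+1$ or $n+2$ facets and the same dimension — this is where we need auxiliary constructions such as truncating a vertex or taking a suitable ``one-more-facet'' modification that preserves Hirsch-sharpness — and only then wedge once to reach dimension $d+1$. Alternatively, and perhaps more cleanly, one wedges the original $P$ over $F$ to get a $(d+1)$-polytope with $n+1$ facets, then performs a vertex-splitting or facet-pushing operation \emph{within} dimension $d+1$ that adds one or two more facets without decreasing the diameter and without destroying Hirsch-sharpness; the arithmetic $(n+k)-(d+1)=n+k-d-1$ must come out right, so the diameter has to go up by exactly one for each extra facet, which is plausible because the extra facet can be made to ``interpose'' on the long geodesic.

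The main obstacle will be the upper-bound direction: showing that after wedging (and after the auxiliary one- or two-facet modifications) the diameter does \emph{not} grow faster than the number of facets minus the dimension. Lower bounds on diameter come essentially for free from Lemma \ref{lemma:wedge} and from the ``you must enter each of $k$ new facets one at a time'' argument used repeatedly in Section \ref{sec:hirsch-sharp}, but controlling the diameter from above requires understanding the combinatorics of $\wed_F(P)$ well enough to route a short path between \emph{every} pair of its vertices — in particular between two vertices that project to the same vertex of $P$ but sit on opposite big facets $F_1$ and $F_2$, and between vertices whose projections are far apart in $G(P)$. Here one exploits that a path in $\wed_F(P)$ projects to a walk in $P$ of the same or shorter length, plus at most one ``crossing'' step between the $F_1$-side and the $F_2$-side, so the key quantitative claim is that for the chosen $F$ every vertex of $P$ is within distance roughly $n-d-1$ of the facet $F$ inside $G(P)$, leaving one step of slack for the crossing. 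Verifying this for the specific $F$ of the Klee–Walkup polytope $Q_4$ (the base of the iteration, per the discussion following Theorem \ref{thm:hirsch-sharp}) is the computational heart of the argument, and propagating it through the iteration is the bookkeeping heart.
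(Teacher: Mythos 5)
Your choice of the facet $F$ is backwards, and this is where the proposal breaks. The paper takes $F$ to be a facet containing \emph{neither} $u$ nor $v$; such an $F$ exists precisely because $n>2d$, since $u$ and $v$ together lie on at most $2d<n$ facets. You propose to pick $F$ so that both $u$ and $v$ lie on it. This fails twice over. First, such an $F$ need not exist: for the Klee--Walkup polytope $Q_4$, the diameter-realizing pair corresponds to the disjoint tetrahedra $abcd$ and $efgh$ of $Q_4^*$, so $u$ and $v$ are complementary and share no facet at all. Second, and more to the point, the entire mechanism depends on $F$ \emph{missing} $u$ and $v$: a vertex of $P$ not on $F$ lifts to an \emph{edge} of $\wed_F(P)$, so after wedging we get two edges $u_1u_2$ and $v_1v_2$ with every $u_i$ at distance at least $n-d$ from every $v_j$. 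If $u$ and $v$ lay on $F$ they would lift to single vertices (on the ridge $F_1\cap F_2$), and you would have no room for the next step.

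That next step is the one your proposal only gestures at with ``vertex-splitting or facet-pushing.'' The actual operation is vertex \emph{truncation}: cut off the vertex $u_1$ (and, for the $n+3$ case, also $v_1$) by a hyperplane close to it. Each truncation keeps the dimension at $d+1$, adds exactly one facet (a small $d$-simplex), and pushes a witness pair one step further out --- because any path from the far side to a vertex of the new simplex facet must enter that facet, adding a step to the distance the surviving endpoint $u_2$ (resp.\ $v_2$) already certifies. This is exactly why the edges created by wedging over a facet missing $u,v$ are the crucial resource: you consume one endpoint per truncation while the other endpoint keeps the long-distance certificate alive. Finally, note that the survey's proof establishes only the lower bound $\operatorname{diam}\ge (n+k)-(d+1)$ for $k=1,2,3$; your long discussion about controlling the diameter from above, and your appeal to Remark~\ref{rem:non-hirsch-wedge} and the non-revisiting property, is aimed at something this proof does not attempt and does not need.
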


\addtocounter{theorem}{1}
% this makes room for a Lemma in the "companion"

\begin{proof}
Let $u$ and $v$ be vertices at distance $n-d$ in a simple $d$-polytope with $n$-facets. Let $F$ be a facet not containing any of them, which exists since $n>2d$. When we wedge on $F$ we get two edges $u_1u_2$ and $v_1v_2$ with the properties that the distance from any $u_i$ to any $v_i$ is again (at least) $d$. We can then truncate one or both of $u_1$ and $v_1$ to obtain one or two more facets in a polytope that is still Hirsch-sharp. See Figure~\ref{fig:sharp-3d-3}.
\end{proof}

\begin{figure}[hbt]
\begin{center}
\includegraphics[scale=0.60]{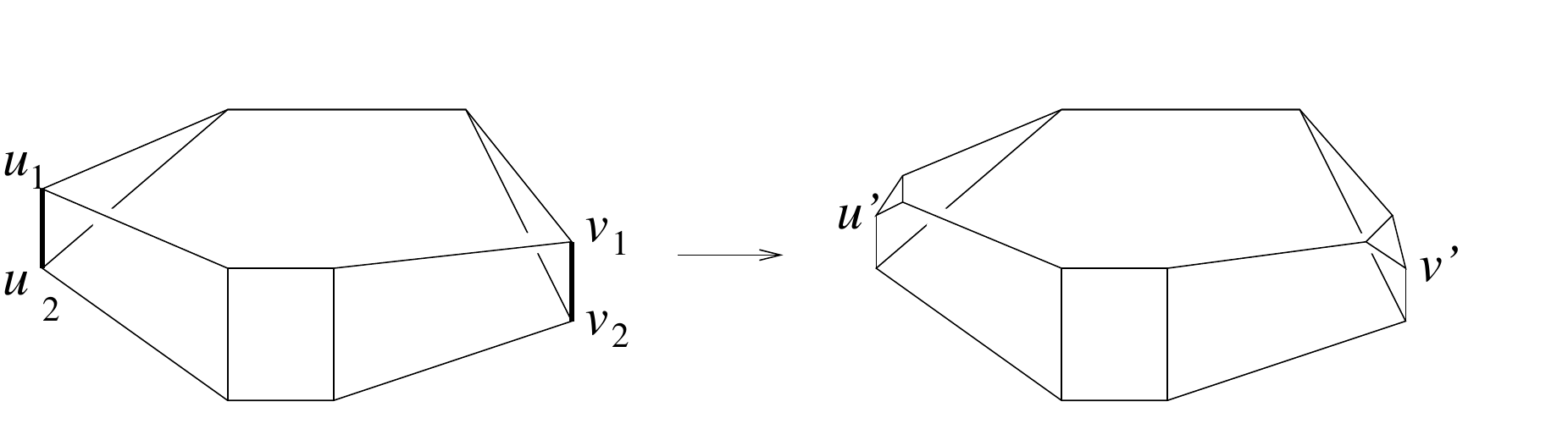}
\caption{After wedging in a Hirsch-sharp polytope we can truncate twice}\label{fig:sharp-3d-3}
\end{center}
\end{figure}

Hirsch-sharp polytopes of dimensions two and three exist only when $n\le 2d$ (see Section~\ref{sec:small-dimension}). Existence of Hirsch-sharp polytopes with many facets in dimensions four to six remains open. We do know that they do not exist in dimension four with 10, 11 or 12 facets (see Theorem~\ref{thm:hirschforsmalln}), which may well indicate that $Q_4$ is the only
Hirsch-sharp $4$-polytope.
%%%%%%%%%%%%%%

%\section{Three false variants of the Hirsch conjecture}
%\label{sec:negative}

%To finish this survey, we present (old) counterexamples to three natural variants of the Hirsch conjecture. 

\subsection{The unbounded and monotone Hirsch conjectures}\label{sec:unbounded-monotone}

In the Hirsch conjecture as we have stated it, we only consider bounded polytopes. However, in the context of linear programming the feasible region may well not be bounded, so the conjecture is equally relevant for \emph{unbounded} polyhedra. In fact, that is how W.~Hirsch originally posed the question. 

Moreover, for the simplex method in linear programming one follows \emph{monotone paths}: starting at an initial vertex $u$ one does pivot steps (that is, one crosses edges) always increasing the value of the linear functional $\phi$ to be maximized, until one arrives at a vertex $v$ where no pivot step gives a greater value to $\phi$. Convexity then implies that $v$ is the global maximum for $\phi$ in the feasible region.
This raises the question whether a \emph{monotone} variant of the Hirsch conjecture holds: given two vertices $u$ and $v$ of a polyhedron $P$ and a linear functional that attains its maximum on $P$ at $v$, is there a $\phi$-monotone path of edges from $u$ to $v$ whose length is at most $n-d$? 

Both the unbounded and monotone variants of the Hirsch conjecture fail, and both proofs use the Klee-Walkup Hirsch-sharp polytope $Q_4$  described in Section~\ref{sec:Q4}. In fact, knowing the mere existence of such a polytope is enough. The proofs do not use any property of $Q_4$ other than the fact that it is Hirsch-sharp, simple, and has $n>2d$. Simplicity is not a real restriction since it can always be obtained without decreasing the diameter (Lemma~\ref{lemma:simple}). The 
inequality $n>2d$, however, is essential.

\begin{theorem}[Klee-Walkup~\cite{Klee:d-step}]
\label{thm:unbounded}
There is a simple unbounded polyhedron $\tilde Q_4$ with eight facets and dimension four and whose graph has diameter 5.
\end{theorem}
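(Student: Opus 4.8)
The plan is to produce $\tilde Q_4$ directly from the Klee--Walkup polytope $Q_4$ by "removing a facet at infinity." Recall $Q_4$ is a simple $4$-polytope with $9$ facets, and by Theorem~\ref{thm:klee-walkup} it has two vertices $u,v$ at distance $5$; since $9>2d=8$, there is (in fact, many) a facet $F$ of $Q_4$ that contains neither $u$ nor $v$. First I would apply a projective transformation of $\R^4$ that sends the supporting hyperplane of $F$ to the hyperplane at infinity, while keeping $Q_4$ on the finite side; concretely, if $F$ is cut out by $f(x)\le b$, one composes with the map $x\mapsto x/(b-f(x))$ on the region $\{f(x)<b\}$. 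The image $\tilde Q_4$ is then an unbounded polyhedron: it is combinatorially obtained from $Q_4$ by deleting the facet $F$ and declaring the edges and faces that used to meet $F$ to be unbounded (rays, unbounded $2$-faces, etc.). Thus $\tilde Q_4$ has $9-1=8$ facets, still has dimension $4$, and is still simple (projective transformations preserve the face lattice, and a vertex of $Q_4$ lies in exactly $4$ facets, at most one of which is $F$, so the bounded vertices of $\tilde Q_4$ that survive still have degree $4$ in the graph — and since $F$ contained no vertex incident to... wait, $F$ is a facet so it does contain vertices; but those vertices of $Q_4$ on $F$ go to infinity, so they simply disappear, and every surviving vertex had $F$ not among its $4$ facets, hence retains degree $4$).

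Next I would check that the diameter of the graph $G(\tilde Q_4)$ is still $5$. The two vertices $u$ and $v$ survive the transformation, since neither lies on $F$; so they are both vertices of $\tilde Q_4$. The graph $G(\tilde Q_4)$ is a \emph{subgraph} of $G(Q_4)$: we have thrown away the vertices lying on $F$ together with all edges incident to them, and some bounded edges of $Q_4$ have become rays (unbounded $1$-faces), which by convention do not contribute to $G(\tilde Q_4)$. Therefore any path from $u$ to $v$ in $G(\tilde Q_4)$ is also a path in $G(Q_4)$, and by Theorem~\ref{thm:klee-walkup} has length at least $5$; this gives $\operatorname{dist}_{G(\tilde Q_4)}(u,v)\ge 5$, hence $\diam(G(\tilde Q_4))\ge 5$. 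For the reverse inequality it suffices to exhibit one path of length $\le 5$ between each pair of vertices; the cleanest route is to observe that $\tilde Q_4$ has $8=2\cdot 4$ facets, so if one grants the small-cases results then $H(8,4)=4$ would even give $4$ — but that theorem is stated for bounded polytopes, so instead I would argue directly that the bounded edge-graph of $\tilde Q_4$ is connected with the required diameter by inspecting the finite part of the face lattice of $Q_4$ minus $F$ (equivalently, compute it with {\tt polymake}, exactly as suggested for $Q_4$ itself). In fact one can shortcut: it is enough for the theorem that $\diam(G(\tilde Q_4))=5>8-4=4$, so we only need the lower bound plus the finiteness of the diameter, i.e.\ that $G(\tilde Q_4)$ is connected — which follows because $G(Q_4)$ is connected and removing the vertices of a single facet from the graph of a simple polytope keeps it connected (a standard fact: for a simple polytope and a facet $F$, the vertices not on $F$ span a connected subgraph, since one can always pivot away from $F$).

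The main obstacle is the last point — controlling what happens to the graph after sending $F$ to infinity. One must be careful that (i) no \emph{new} short edges are created (they are not: the face lattice is preserved by a projective transformation, so no edge of $\tilde Q_4$ fails to come from an edge of $Q_4$), and (ii) enough bounded edges survive that $u$ and $v$ remain in the same connected component of the bounded graph, with no bounded path shorter than $5$. Point (ii) is really the content, and the honest way to handle it is either the connectivity lemma for simple polytopes mentioned above together with the subgraph observation, or a direct machine check on the $15$-tetrahedron subcomplex $K$ of Figure~\ref{fig:dualK} (in the polar picture, passing to $\tilde Q_4$ corresponds to deleting one vertex of $Q_4^*$ and taking the antistar — which is precisely the kind of computation already done there). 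I would present the clean lower-bound argument in the text and relegate the routine verification that the diameter is exactly $5$ to a {\tt polymake} computation or to \cite{Kim-Santos-companion}.
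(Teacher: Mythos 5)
Your proposal is correct and takes essentially the same approach as the paper: choose a facet $F$ of the Klee--Walkup polytope $Q_4$ not containing the two distant vertices $u,v$ (which exists since $9>2\cdot 4$), send $F$ to infinity by a projective transformation, and observe that the resulting graph is a subgraph of $G(Q_4)$ containing both $u$ and $v$, so the distance between them is still at least $5$. You are somewhat more scrupulous than the paper about connectivity and about confirming that no new edges are created, but the core argument is identical.
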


\begin{proof}
Let $Q_4$ be the simple Klee-Walkup polytope with nine facets,  and let $u$ and $v$ be vertices of  $Q_4$ at distance five from one another.
By simplicity, $u$ and $v$ lie in (at most) eight facets in total and there is (at least) one facet $F$ not containing $u$ nor $v$. 
Let $\tilde Q_4$ be the unbounded polyhedron obtained by a projective transformation that sends this ninth facet to infinity. The graph of $\tilde Q_4$ contains both $u$ and $v$, and is a subgraph of that of $\tilde Q_4$, hence its diameter is still at least five. See Figure~\ref{fig:unbounded} for a schematic rendition of this idea.
\end{proof}

\begin{figure}[htb]
\begin{center}
\includegraphics[width=4in]{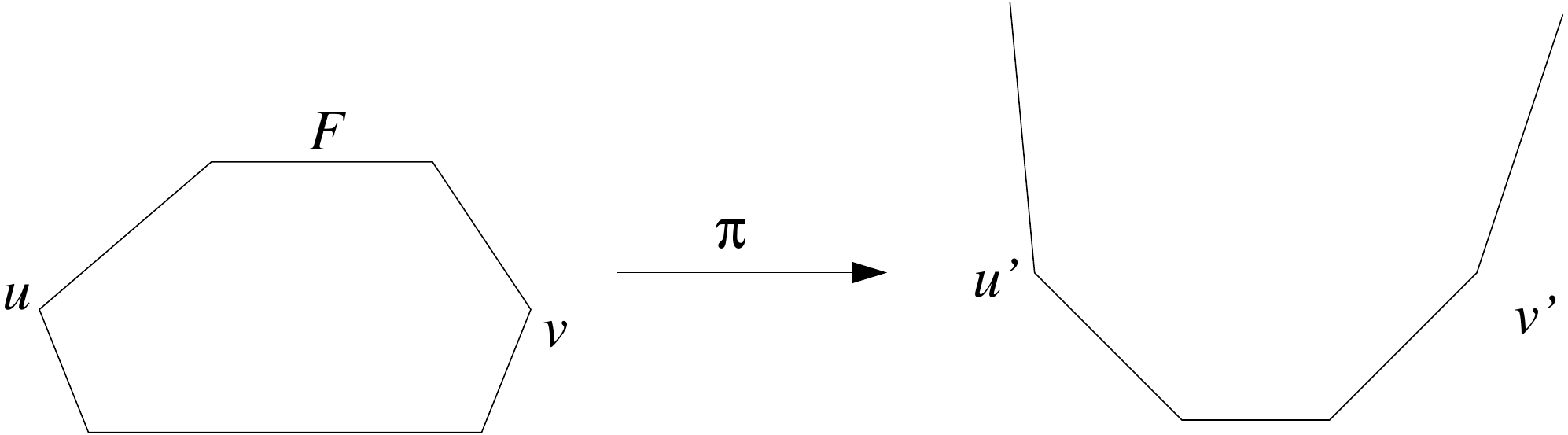}
\caption{Disproving the unbounded Hirsch conjecture}
\label{fig:unbounded}
\end{center}

\end{figure}

\begin{remark}
\rm
The ``converse'' of the above proof also works: from any non-Hirsch unbounded $4$-polyhedron $\tilde Q$ with eight facets, one can build a bounded $4$-polytope with nine facets and diameter five, as follows:

Let $u$ and $v$ be vertices of $\tilde Q$ at distance five from one another.
Construct the polytope $Q$ by cutting $\tilde Q$ with a hyperplane that leaves all the vertices of $\tilde Q$ on the same side. This adds a new facet and changes the graph, by adding new vertices and edges on that facet. But $u$ and $v$ will still be at distance five: to go from $u$ to $v$ either we do not use the new facet $F$ that we created (that is, we stay in the graph of $\tilde Q_4$) or we use a pivot to enter the facet $F$ and at least another four to enter the four facets containing $v$.
\end{remark}

We now turn to the monotone variant of the Hirsch conjecture:

\begin{*theorem}[Todd~\cite{Todd:MonotonicBoundedHirsch}]
\label{thm:monotone}
There is a simple $4$-polytope $P$ with eight facets, two vertices $u$ and $v$ of it, and a linear functional $\phi$ such that:
\begin{enumerate}
\item $v$ is the only maximal vertex for $\phi$.
\item Any edge-path from $u$ to $v$ and monotone with respect to $\phi$ has length at least five.
\end{enumerate}
\end{*theorem}
%The proof is found in the appendix. 
In both the constructions of Theorems~\ref{thm:unbounded} and~\ref{thm:monotone} one can glue several copies of the initial block $Q_4$ to one another,
%The basic idea is (the dual of) the same one used in Corollary~\ref{coro:glue}. We skip details,
%but in both cases we 
increasing the number of facets by four and the diameter by five, per $Q_4$ glued:

\begin{theorem}[Klee-Walkup, Todd]
\begin{enumerate}
\item There are unbounded $4$-polyhedra with $4+4k$ facets and diameter $5k$, for every $k\ge 1$.

\item There are bounded $4$-polyhedra with $4+4k$ facets and vertices $u$ and $v$ of them with the property that any monotone path from $u$ to $v$ with respect to a certain linear functional $\phi$ maximized at $v$ has length at least $5k$.
\end{enumerate}
\end{theorem}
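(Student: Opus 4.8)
Looking at the final theorem, I need to prove that one can iterate the constructions of Theorems~\ref{thm:unbounded} and~\ref{thm:monotone} by gluing $k$ copies of $Q_4$, gaining $4$ facets and $5$ units of diameter (or monotone path length) per copy.

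\medskip

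The plan is to make precise the ``gluing'' operation hinted at in the statement. For part (1), I would proceed as follows. First I recall from the proof of Theorem~\ref{thm:unbounded} that $\tilde Q_4$ is an unbounded simple $4$-polyhedron with $8$ facets, containing two vertices $u$ and $v$ at distance $5$, and with the key feature that $u$ and $v$ lie on ``opposite'' unbounded parts: more precisely, I want to arrange (by choosing which facet of $Q_4$ is sent to infinity, and by an affine change of coordinates) that $\tilde Q_4$ has two distinguished facets $G_u$ and $G_v$, each a $3$-dimensional unbounded polyhedron, such that $u \in G_u$, $v \in G_v$, $G_u \cap G_v = \emptyset$, and $\tilde Q_4$ is contained in a slab between two parallel hyperplanes with $G_u$ and $G_v$ lying in these two hyperplanes (or, alternatively, so that both $G_u$ and $G_v$ are ``visible from infinity'' in a single common direction). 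Then the gluing of two copies $\tilde Q_4^{(1)}$ and $\tilde Q_4^{(2)}$ is obtained by placing them so that the facet $G_v^{(1)}$ of the first is identified with the facet $G_u^{(2)}$ of the second (after an affine map matching the combinatorial types of these two facets — here one uses that $Q_4$ has enough symmetry, or simply that $G_u$ and $G_v$ can be taken combinatorially equivalent). The result is a new unbounded simple $4$-polyhedron; its facet count is $8 + 8 - 2 = 14 = 4 + 4\cdot 2$, since the two glued facets merge into a single (now interior-to-the-union, hence removed) wall — one must check carefully that exactly one facet disappears from each side and the identified wall is not a facet of the union, giving a net of $4$ new facets per additional copy. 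Iterating $k$ times gives $4 + 4k$ facets.

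\medskip

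The diameter lower bound is the cleaner half. In the glued polyhedron, consider the vertex $u^{(1)}$ (the ``$u$'' of the first copy) and $v^{(k)}$ (the ``$v$'' of the last copy). Any path between them must cross each identified wall $W_i = G_v^{(i)} = G_u^{(i+1)}$, and in between consecutive crossings it must traverse a path inside the $i$-th copy from (a vertex on) $W_{i-1}$ to (a vertex on) $W_i$. The point is that within a single copy $\tilde Q_4^{(i)}$, going from the ``$u$-wall'' to the ``$v$-wall'' requires at least $5$ steps — and this is exactly what the Hirsch-sharpness of $Q_4$, together with the projective/affine reduction in Theorem~\ref{thm:unbounded}, gives us. Summing over the $k$ copies yields diameter at least $5k$; one should phrase this via the simple-polytope property that crossing an edge enters exactly one new facet and leaves exactly one, so that a ``shortcut'' across a wall cannot save steps. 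Part (2) is entirely parallel: one glues $k$ copies of Todd's bounded polytope of Theorem~\ref{thm:monotone}, matching the maximal-vertex facet of one copy to the starting-vertex facet of the next, and one chooses the linear functional $\phi$ on the glued object so that it restricts, on each copy, to (a positive rescaling of) the functional that forces monotone paths of length $\ge 5$ there; monotonicity of a global path forces it to be monotone on each piece, and $\phi$ should be chosen so that the pieces are ``stacked'' in increasing $\phi$-order so the combined path is genuinely $\phi$-monotone and has length $\ge 5k$.

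\medskip

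The main obstacle — and the step I would spend the most care on — is verifying that the gluing is geometrically realizable and that it behaves correctly on the facet count: one must exhibit coordinates (or a projective transformation) under which $G_u$ and $G_v$ become affinely congruent facets lying in parallel supporting hyperplanes of a slab containing $\tilde Q_4$, so that two copies can be stacked face-to-face without creating unwanted new vertices or edges, without making the result non-simple, and with exactly the claimed net change of $+4$ facets. This is a hands-on check with the explicit coordinates of $Q_4^*$ (equivalently $Q_4$) given in Section~\ref{sec:Q4}; it is the kind of verification that is "routine but not trivial," and in the companion paper~\cite{Kim-Santos-companion} I would carry it out in full, whereas here I would only indicate that such a stacking exists and refer to the figures (as the excerpt already does for the single-copy case in Figure~\ref{fig:unbounded}).
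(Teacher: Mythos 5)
Your proposal has two linked gaps, both stemming from choosing the wrong gluing operation.

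First, the arithmetic in your facet count is simply wrong: you write $8 + 8 - 2 = 14 = 4 + 4\cdot 2$, but $4 + 4 \cdot 2 = 12$. Gluing two copies of $\tilde Q_4$ along one facet from each, so that the identified wall becomes interior, removes only $2$ facets and leaves $14$, two too many. To land on $4+4k$ you need each additional copy to contribute only $4$ net facets, i.e.\ you must \emph{lose five of the eight facets} of each new copy. Gluing along a single facet cannot do that. The correct operation is the polar of ``gluing two simplicial polytopes along a facet'': in the simple setting this is \emph{blending at a vertex}. Concretely, you truncate the vertex $v^{(i)}$ of the $i$-th copy and the vertex $u^{(i+1)}$ of the $(i+1)$-st copy, obtaining in each case a small simplex facet, and then glue along those two simplices by a projective transformation (exactly as in Figure~\ref{fig:glueing} of the companion, but polarized). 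Because both truncated vertices are simple, the four facets through $v^{(i)}$ merge pairwise with the four facets through $u^{(i+1)}$. That gives $4$ (through $u^{(1)}$) $+ 4$ (merged) $+ 4$ (through $v^{(2)}$) $= 12$ facets for $k=2$, and $4 + 4k$ in general. Dually this is gluing the anti-star complexes $K^{(i)}$ of $w$ in $Q_4^*$ along the tetrahedra $efgh^{(i)} = abcd^{(i+1)}$ (with a vertex identification chosen so that interior and boundary triangles of the shared tetrahedron match up correctly), giving $8 + 4(k-1) = 4+4k$ vertices.

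Second, and more serious, your diameter lower bound does not follow from what you invoke. You assert that ``within a single copy $\tilde Q_4^{(i)}$, going from the $u$-wall to the $v$-wall requires at least $5$ steps, and this is exactly what the Hirsch-sharpness of $Q_4$ gives us.'' It does not. Hirsch-sharpness of $Q_4$ only says $\operatorname{dist}(u,v)=5$ for one specific pair of vertices. The facets $G_u$ and $G_v$ you propose to glue are $3$-dimensional polyhedra with many vertices besides $u$ and $v$, and nothing in the proposal shows that \emph{every} vertex of $G_u$ is at distance $\geq 5$ from \emph{every} vertex of $G_v$; in general, two disjoint facets of a small simple $4$-polyhedron have vertices at much smaller distance, so a path could cut through the walls far from $u$ and $v$. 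The vertex-blending version avoids this: the ``neck'' connecting consecutive copies is a single truncated vertex, so any path from $u^{(1)}$ to $v^{(k)}$ must pass through a truncation vertex of the $i$-th copy for each $i$, and the distance from $u^{(i)}$ to any such truncation vertex is at least $5$ because the truncated vertex $v^{(i)}$ itself is at distance $5$ from $u^{(i)}$ and the truncation adds one more edge. Dually, any dual-graph path from $abcd^{(1)}$ to $efgh^{(k)}$ must pass through every shared tetrahedron $efgh^{(i)}=abcd^{(i+1)}$ (which is the unique crossing point between consecutive copies), and each segment $abcd^{(i)}\to efgh^{(i)}$ takes at least $5$ steps, giving $\geq 5k$ in total. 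This cleaner bottleneck argument is what replaces your wall-to-wall claim, and it is the part of the proof you cannot omit.

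The high-level sketch of part (2) inherits both problems: ``matching the maximal-vertex facet of one copy to the starting-vertex facet of the next'' is not a well-defined operation (there are four facets through each of $u$ and $v$), and the same blending-at-a-vertex device is what one actually uses; one also needs to verify that $\phi$ can be chosen to be strictly monotone along the necks, which your outline correctly flags but does not resolve.
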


This leaves the following open questions:

\begin{itemize}
\item Improve these constructions so as to get the ratio ``diameter versus facets'' bigger than $5/4$. A ratio bigger than two for the unbounded case would probably yield counter-examples to the bounded Hirsch conjecture.

\item Ziegler~\cite[p.~87]{Ziegler:LecturesPolytopes} poses the following \emph{strict} monotone Hirsch conjecture: ``for every linear functional $\phi$ on a $d$-polytope with $n$ facets there is a $\phi$-monotone path of length at most $n-d$ from the minimal to the maximal vertex''. Put differently, in the monotone Hirsch conjecture we add the requirement that not only $v$ but also $u$ has a supporting hyperplane where $\phi$ is constant.
\end{itemize}

\subsection{The topological Hirsch conjecture is false}\label{sec:topological}

Another natural variant of the Hirsch conjecture is topological. Since (the boundary of) every simplicial $d$-polytope is a topological triangulation of the $(d-1)$-dimensional sphere, we can ask whether the simplicial version of the Hirsch conjecture, the one where we walk from simplex to simplex rather than from vertex to vertex, holds for arbitrary triangulations of spheres. The first counterexample to this statement was found by Walkup in 1979 (see~\cite{Walkup}), and a simpler one was soon constructed by Walkup and Mani~\cite{Mani:A-3-sphere-counterexample}.

Both constructions are based on the equivalence of the Hirsch conjecture to the non-revisiting conjecture (Theorem~\ref{thm:dstep-nonrevisiting}). The proof of the equivalence is purely combinatorial, so it holds true for topological spheres. Walkup's initial example is a $4$-sphere without the non-revisiting property, and Mani and Walkup's is a $3$-sphere:

\begin{*theorem}[Mani-Walkup~\cite{Mani:A-3-sphere-counterexample}]
\label{thm:mani-walkup}
There is a triangulated 3-sphere with 16 vertices and without the non-revisiting property. 
Wedging on it eight times yields an $11$-sphere with $24$ vertices and with ridge-graph diameter at least $13$.
\end{*theorem}

This triangulated $3$-sphere  would give a counterexample to the Hirsch conjecture if it were \emph{polytopal}. That is, if it
were combinatorially isomorphic to the boundary complex of a four-dimensional simplicial polytope. Altshuler~\cite{Altshuler:NotPolytopal} has shown that (for the explicit completion of the subcomplex $K$ given in~\cite{Mani:A-3-sphere-counterexample}) this is not the case. As far as we know, it remains an open question to show that \emph{no completion of $K\cup \{abcd,mnop\}$} to the 3-sphere is polytopal, but we believe that to be the case. Even more strongly,
 we believe that $K\cup \{abcd,mnop\}$ cannot be embedded in $\real^3$ with linear tetrahedra, a necessary condition for polytopality  by the well-known Schlegel construction~\cite{Ziegler:LecturesPolytopes}.

As in the monotone and bounded cases, several copies of the construction can be glued to one another. Doing so provides triangulations of the 11-sphere with $12 +12k$ vertices and diameter at least $13k$, for any $k$.

\subsection*{Acknowledgments}
This paper grew out of several conversations during the second author's sabbatical leave at UC Davis in 2008 and the authors' participation in the I-Math DocCourse on Discrete and Computational Geometry at the \emph{Centre de Recerca Matem\`atica} in 2009. We thank both institutions for hosting us and the financial support from the National Science Foundation and  the Spanish Ministry of Science.  

We also thank David Bremner, Jes\'us De Loera, Antoine Deza, J\"org Rambau, G\"unter M. Ziegler  and the anonymous referee for their valuable comments on the first version of the paper.

\vskip.5cm
\noindent {\small Edward D. Kim}\newline
\emph{Department of Mathematics}\newline
\emph{University of California, Davis. Davis, CA 95616, USA}\newline
\emph{email: }\url{ekim@math.ucdavis.edu}\newline
\emph{web: }\url{http://www.math.ucdavis.edu/~ekim/}

\vskip.5cm
\noindent {\small Francisco Santos}\newline
\emph{Departamento de Matem\'aticas, Estad\'istica y Computaci\'on}\newline
\emph{Universidad de Cantabria, E-39005 Santander, Spain}\newline
\emph{email: }\url{francisco.santos@unican.es}\newline
\emph{web: }\url{http://personales.unican.es/santosf/}

%\clearpage

%\tableofcontents


\begin{thebibliography}{}
\small

\bibitem{Altshuler:NotPolytopal}
A.~Altshuler.
\newblock The {M}ani-{W}alkup spherical counterexamples to the {$W_v$}-path
  conjecture are not polytopal.
\newblock {\em Math. Oper. Res.}, 10(1):158--159, 1985.

\bibitem{ABS-3spheres}
A.~Altshuler, J.~Bokowski, and L.~Steinberg.
\newblock The classification of simplicial $3$-spheres with nine vertices into polytopes and non-polytopes.
\newblock {\em Discrete Math.}, 31:115--124, 1980.

\bibitem{Balinski:On-the-graph-structure}
M.~L. Balinski.
\newblock On the graph structure of convex polyhedra in $n$-space.
\newblock {\em Pacific J. Math.}, 11:431--434, 1961.

\bibitem{Balinski:DualTransportation}
M.~L. Balinski.
\newblock The {H}irsch conjecture for dual transportation polyhedra.
\newblock {\em Math. Oper. Res.}, 9(4):629--633, 1984.

\bibitem{Barnette}
D.~Barnette.
\newblock $W\sb{v}$ paths on 3-polytopes. 
\newblock {\em J. Combinatorial Theory}, 7:62--70, 1969.

\bibitem{Beck-Robins}
M.~Beck, S.~Robins, \emph{Computing the continuous discretely. Integer-point enumeration in polyhedra}. Undergraduate Texts in Mathematics. Springer, 2007.

%\bibitem{Bing:triangulations}
%R.~H.~Bing, An alternative proof that $3$-manifolds can be triangulated. 
%\emph{Ann. of Math.} (2) 69:37--65, 1959. 

\bibitem{Bjorner-Brenti}
A.~Bj\"orner, F.~Brenti, 
Combinatorics of Coxeter Groups,
Graduate Texts in Mathematics, 231, Springer-Verlag, 2005.

\bibitem{BCSS}
L.~Blum, F.~Cucker, M.~Shub, and S.~Smale,
Complexity and real computation, Springer-Verlag, 1997. 

\bibitem{Borgwardt}
K.~H.~Borgwardt, 
The Average Number of Steps Required by the 
Simplex Method Is Polynomial.
\emph{Zeitschrift fur Operations Research}, 26:157--77, 1982.



\bibitem{Boyd:ConvexOptimization}
S.~Boyd and L.~Vandenberghe.
\newblock {\em Convex Optimization}.
\newblock Cambridge University Press, Cambridge, 2004.

\bibitem{Bremner:MoreBounds}
D.~Bremner, A.~Deza, W.~Hua, and L.~Schewe.
More bounds on the diameter of convex polytopes: $\Delta(4,12)=\Delta(5,12)=\Delta(6,13)=7$. (in preparation) 

\bibitem{Bremner:DiameterFewFacets}
D.~Bremner and L.~Schewe.
\newblock Edge-graph diameter bounds for convex polytopes with few facets.

\bibitem{Brightwell:LinearTransportation}
G.~Brightwell, J.~van~den Heuvel, and L.~Stougie.
\newblock A linear bound on the diameter of the transportation polytope.
\newblock {\em Combinatorica}, 26(2):133--139, 2006.

\bibitem{Cunningham:Theoretical-properties}
W.~H. Cunningham.
\newblock Theoretical properties of the network simplex method.
\newblock {\em Math. Oper. Res.}, 4:196--208, 1979.

\bibitem{Dantzig-book}
G.~B.~Dantzig,
\newblock Linear programming and extensions,
\newblock {Princeton University Press}, {1963}.

\bibitem{DeLoera:GraphsTP}
J.~A. {De Loera}, E.~D. Kim, S.~Onn, and F.~Santos.
\newblock Graphs of transportation polytopes.
\newblock {\em J. Combin. Theory Ser. A}, 116(8):1306--1325, 2009.

\bibitem{deloera-latticepoints}
J.~A.~De Loera.
The many aspects of counting lattice points in polytopes.
\emph{Math. Semesterber.}
52(2):175--195, 2005.

\bibitem{DO}
J.~A. {De Loera} and S.~Onn.
\newblock All rational polytopes are transportation polytopes and all polytopal
  integer sets are contingency tables.
\newblock In {\em Lec. Not. Comp. Sci.}, volume 3064, pages 338--351, New York,
  NY, 2004. Proc. 10th Ann. Math. Prog. Soc. Symp. Integ. Prog. Combin. Optim.
  (Columbia University, New York, NY, June 2004), Springer-Verlag.

\bibitem{triang-book}
J.~A.~De Loera, J.~Rambau, F.~Santos, 
Triangulations: Applications, Structures and Algorithms.
Algorithms and Computation in Mathematics (to appear).

\bibitem{Dedieu}
J.-P.~Dedieu, G.~Malajovich, and M.~Shub.
On the curvature of the central path of linear programming theory.
{\em Found. Comput. Math.} 5:145--171, 2005.

\bibitem{Deza:Central-path}
A.~Deza, T.~Terlaky, and Y.~Zinchenko.
\newblock Central path curvature and iteration-complexity for redundant
  {K}lee-{M}inty cubes.
\newblock {\em Adv. Mechanics and Math.}, 17:223--256, 2009.

\bibitem{Deza:The-continuous-d-step}
A.~Deza, T.~Terlaky, and Y.~Zinchenko.
\newblock A continuous $d$-step conjecture for polytopes.
\newblock {\em Discrete Comput. Geom.}, 41:318--327, 2009.

\bibitem{Deza:Curvature}
A.~Deza, T.~Terlaky, and Y.~Zinchenko.
\newblock Polytopes and arrangements: Diameter and curvature.
\newblock {\em Oper. Res. Lett.}, 36(2):215--222, 2008.

\bibitem{Dyer:RandomWalks}
M.~Dyer and A.~Frieze.
\newblock Random walks, totally unimodular matrices, and a randomised dual
  simplex algorithm.
\newblock {\em Math. Program.}, 64:1--16, 1994.

\bibitem{Eisenbrand:limits-of-abstraction}
F.~Eisenbrand, N.~H\"ahnle, A.~Razborov, and T.~Rothvo\ss.
\newblock
Diameter of Polyhedra: Limits of Abstraction.
\newblock
2009. (in preparation)

\bibitem{FominZelevinski-associahedra}
S.~Fomin and A.~Zelevinsky.
$Y$-systems and generalized associahedra. 
\emph{Ann. of Math.} 158(2), 977--1018, 2003.

%\bibitem{FominZelevinski-clusteralgebrasIV}
%S.~Fomin and A.~Zelevinsky.
%Cluster algebras. IV. Coefficients.
%\emph{Compos. Math.} 143:112-164, 2007.

\bibitem{Fritzsche99morepolytopes}
K.~Fritzsche and F.~B. Holt.
\newblock More polytopes meeting the conjectured {H}irsch bound.
\newblock {\em Discrete Math.}, 205:77--84, 1999.

\bibitem{polymake} 
E.~Gawrilow, M.~Joswig.
Polymake: A software package for analyzing convex polytopes. 
Software available at
\url{http://www.math.tu-berlin.de/polymake/}

\bibitem{Goldfarb:Polynomial-simplex}
D.~Goldfarb and J.~Hao.
\newblock Polynomial simplex algorithms for the minimum cost network flow
  problem.
\newblock {\em Algorithmica}, 8:145--160, 1992.

\bibitem{Goodey}
P.~R.~Goodey.
\newblock Some upper bounds for the diameters of convex polytopes.
\newblock {\em Israel J. Math.}, 11:380--385, 1972. 

\bibitem{Holt:Hsharpd7}
F.~B.~Holt.
\newblock Blending simple polytopes at faces.
\newblock {\em Discrete Math.}, 285:141--150, 2004.

\bibitem{Holt:Many-polytopes}
F.~Holt and V.~Klee.
\newblock Many polytopes meeting the conjectured {H}irsch bound.
\newblock {\em Discrete Comput. Geom.}, 20:1--17, 1998.

\bibitem{Hurkens:Diameter4p}
C.~Hurkens.
\newblock Personal communication,
\newblock 2007.

\bibitem{Kalai:A-subexponential-randomized}
G.~Kalai.
\newblock A subexponential randomized simplex algorithm.
\newblock In {\em Proceedings of the 24th annual ACM symposium on the Theory of
  Computing}, pages 475--482, Victoria, 1992. ACM Press.

%\bibitem{kalai-dcg} 
%G~Kalai. 
%\newblock Upper bounds for the diameter and height  of graphs of convex polyhedra. 
%\newblock Discrete Comput.~Geom., 8 (1992), 363--372. 


\bibitem{kalaiblog} G. Kalai. Online blog~\url{http://gilkalai.wordpress.com}. See for example
%\url{http://gilkalai.wordpress.com/2008/09/07/diameter-problem-3/}
% proves, among other things, Barnette's argument for the lineality i fixed dimension.
% in part 7, he conjectures that the graph is polynomial:
\url{http://gilkalai.wordpress.com/2008/12/01/a-diameter-problem-7/}, December 2008.
%I conjecture that  is polynomial (and that this holds for  and even in the greater generality considered by Razborov). I also conjectured before that it is not a polynomial, but changed my mind. So frankly, I do not have a clue. Remember that it is even possible that .

\bibitem{Kalai:Quasi-polynomial}
G.~Kalai and D.~J. Kleitman.
\newblock A quasi-polynomial bound for the diameter of graphs of polyhedra.
\newblock {\em Bull. Amer. Math. Soc.}, 26:315--316, 1992.

\bibitem{Karmarkar}
N.~Karmarkar.
\newblock A new polynomial time algorithm for linear programming.
\newblock {\em Combinatorica}, 4(4):373--395, 1984.

\bibitem{Khachiyan}
L.~G.~Ha\v{c}ijan. 
\newblock A polynomial algorithm in linear programming. (in Russian)
\newblock \emph{Dokl. Akad. Nauk SSSR}, 244(5):1093--1096, 1979.
 
\bibitem{Kim-Santos-companion}
E.~D. Kim, and F.~Santos.
\newblock Companion to ``An update on the Hirsch conjecture", preprint arXiv:0912.4235, 22 pages, december 2009.

\bibitem{Klee:PathsII}
V.~Klee.
\newblock Paths on polyhedra {II}.
\newblock {\em Pacific J. Math.}, 17(2):249--262, 1966.

\bibitem{Klee-Kleinschmidt}
V.~Klee, P.~Kleinschmidt, 
The $d$-Step Conjecture and Its Relatives,
\emph{Mathematics of Operations Research}, 12(4):718--755, 1987. 

\bibitem{Klee-Minty}
V.~Klee, G.~J.~Minty, How good is the simplex algorithm?, in \emph{Inequalities, III (Proc. Third Sympos., Univ. California, Los Angeles, Calif., 1969; dedicated to the memory of Theodore S. Motzkin)},  Academic Press, New York, 1972, pp. 159--175.


\bibitem{Klee:d-step}
V.~Klee and D.~W. Walkup.
\newblock The $d$-step conjecture for polyhedra of dimension $d < 6$.
\newblock {\em Acta Math.}, 133:53--78, 1967.

\bibitem{Kleinschmidt:Diameter}
P.~Kleinschmidt and S.~Onn.
\newblock On the diameter of convex polytopes.
\newblock {\em Discrete Math.}, 102(1):75--77, 1992.

\bibitem{Larman}
D.~G.~Larman.
\newblock Paths of polytopes. 
\newblock {\em Proc. London Math. Soc.}, 20(3):161--178, 1970. 
 
\bibitem{Mani:A-3-sphere-counterexample}
P.~Mani and D.~W. Walkup.
\newblock A $3$-sphere counterexample to the ${W}_v$-path conjecture.
\newblock {\em Math. Oper. Res.}, 5(4):595--598, 1980.

\bibitem{Matousek:A-subexponential-bound}
J.~Matou\v{s}ek, M.~Sharir, and E.~Welzl.
\newblock A subexponential bound for linear programming.
\newblock In {\em Proceedings of the 8th annual symposium on Computational
  Geometry}, pages 1--8, 1992.

\bibitem{Megiddo:LPinLT}
N.~Megiddo. 
Linear programming in linear time when the dimension is fixed. 
\emph{J. Assoc. Comput. Mach.}, 31(1):114--127, 1984.

\bibitem{Megiddo:SurveyLPCompelxity}
N.~Megiddo. 
On the complexity of linear programming. 
In: \emph{Advances in economic theory: Fifth world congress}, T.~Bewley, ed.
Cambridge University Press, Cambridge, 1987, 225-268.

\bibitem{Naddef:Hirsch01}
D.~Naddef.
\newblock The {H}irsch conjecture is true for $(0,1)$-polytopes.
\newblock {\em Math. Program.}, 45:109--110, 1989.

\bibitem{Oda-book}
T.~Oda, \emph{Convex bodies and algebraic geometry}, Springer Verlag, 1988.

\bibitem{Orlin:PolytimeNetworkSimplex}
J.~B. Orlin.
\newblock A polynomial time primal network simplex algorithm for minimum cost
  flows.
\newblock {\em Math. Program.}, 78:109--129, 1997.

\bibitem{Renegar:A-Mathematical-View}
J.~Renegar.
\newblock {\em A Mathematical View of Interior-Point Methods in Convex
  Optimization}.
\newblock SIAM, 2001.

%\bibitem{Schuchert}
%P.~Schuchert.
%{\em Matroid-Polytope und Einbettungen kombinatorischer Mannigfaltigkeiten} (Ph.D. thesis), TU Darmstadt, 1995.

\bibitem{Smale-AverageLP}
S.~Smale, 
On the Average Number of Steps of the Simplex Method  of Linear Programming. 
\emph{Mathematical Programming}, 27: 241-62, 1983.


\bibitem{Smale} S. Smale, Mathematical problems for the next century. {\em Mathematics: frontiers and perspectives}, pp. 271--294, American Mathematics Society, Providence, RI (2000).

\bibitem{Spielman:WhySimplexUsually}
D.~A. Spielman and S.~Teng.
\newblock Smoothed analysis of algorithms: Why the simplex algorithm usually
  takes polynomial time.
\newblock {\em J. ACM}, 51(3):385--463, 2004.

\bibitem{Todd:MonotonicBoundedHirsch}
M.~J.~Todd.
\newblock The monotonic bounded Hirsch conjecture is false for dimension at least $4$,
\newblock {\em Math. Oper. Res.}, 5:4, 599--601, 1980. 

\bibitem{Vershynin:BeyondHirsch}
R.~Vershynin.
\newblock Beyond {H}irsch conjecture: walks on random polytopes and smoothed
  complexity of the simplex method.
\newblock In {\em IEEE Symposium on Foundations of Computer Science},
  volume~47, pages 133--142. IEEE, 2006.

\bibitem{Walkup}
D.~W.~Walkup.
\newblock The {H}irsch conjecture fails for triangulated $27$-spheres. \newblock {\em Math.~Oper.~Res.}, 3:224-230, 1978.

%\bibitem{Whitehead}
%J.~H.~C.~Whitehead.
%\newblock On subdivisions of complexes,
%\newblock {\em Proc. Cambridge Phil. Soc.}, 31:69--75, 1935.

\bibitem{Ziegler:LecturesPolytopes}
G.~M.~Ziegler, 
\newblock{Lectures on polytopes},
\newblock{Graduate Texts in Mathematics}, {152},
\newblock  {Springer-Verlag, 1995.}

\bibitem{Ziegler-facenumbers}
G.~M.~Ziegler, Face numbers of 4-polytopes and 3-spheres. \emph{Proceedings of the International Congress of Mathematicians, Vol. III (Beijing, 2002)}, Higher Ed. Press, Beijing, 2002, pp.~625--634.

\end{thebibliography}
\end{document}